\theoremstyle{plain}
\newtheorem{theorem}{Theorem}[section]
\newtheorem{proposition}[theorem]{Proposition}
\newtheorem{lemma}[theorem]{Lemma}
\theoremstyle{definition}
\newtheorem{definition}[theorem]{Definition}
\newtheorem{remark}[theorem]{Remark}
\begin{document}

\title{\vspace*{-2.3cm}Large deviation principle for stochastic
	reaction-diffusion equations with super-linear drift on $\mathbb{R}$
	driven by space-time white noise}

\author{Yue Li$^{1}$, Shijie Shang$^{1}$,
Jianliang Zhai$^{1}$}
\footnotetext[1]{\, School of Mathematics, University of Science and Technology of China, Hefei, China. Email:liyue27@mail.ustc.edu.cn (Yue Li), sjshang@ustc.edu.cn (Shijie Shang), zhaijl@ustc.edu.cn (Jianliang Zhai).}

\maketitle

\begin{abstract}
In this paper, we consider stochastic reaction-diffusion equations with super-linear drift on the real line $\mathbb{R}$
driven by space-time white noise. A Freidlin-Wentzell large deviation principle is established by a modified weak convergence method on the space $C([0,T], C_{tem}(\mathbb{R}))$. 
Obtaining the main result in this paper is challenging  due to the setting of unbounded domain, the space-time white noise, and the superlinear drift term without dissipation.
To overcome these difficulties, the special designed norm on $C([0,T], C_{tem}(\mathbb{R}))$,
one order moment estimates of the stochastic convolution and two nonlinear Gronwall-type inequalities play an important role.

\end{abstract}
	
\paragraph{Keywords:}
Stochastic reaction-diffusion equation, large deviation principle, unbounded domain, space-time white noise, weak convergence method, superlinear drift term.

\medskip

\noindent
{\bf AMS Subject Classification:} Primary 60H15;  Secondary 60F10, 35R60.

\section{Introduction}
\setcounter{equation}{0}

The aim of this paper is to establish Freidlin-Wentzell type large deviation principles (LDPs) for the solutions of the following  stochastic reaction-diffusion equation (SRDE) on the real line $\mathbb{R}$ perturbed by space-time white noise
\begin{align}\label{eqn1}
	\begin{cases}
		\mathrm{d}u^\epsilon(t,x)= \frac{1}{2}\Delta u^\epsilon(t,x)\mathrm{d}t+b(u^\epsilon(t,x))\mathrm{d}t+\sqrt{\epsilon}\sigma(u^\epsilon(t,x))W(\mathrm{d}t,\mathrm{d}x),\ t\in[0,T], \ x\in\mathbb{R},\\
		u^\epsilon(0,x)=u_0(x),
	\end{cases}
\end{align}
as the small parameter $\epsilon>0$ converges to 0, where $\Delta=\partial_{xx}$,
$W$ is a space-time white noise defined on some filtered probability space $(\Omega,\mathcal{F},\{\mathcal{F}_t\}_{t\ge 0},\mathbb{P})$ satisfying the usual conditions. Here, $u_0,b$ and $\sigma$ are given functions from  $\mathbb{R}$ to $\mathbb{R}$. For specific conditions on these functions, please refer to Section \ref{sec 2}.

Throughout this paper, we will refer to the LDP of Freidlin-Wentzell type as simply "LDP" for convenience. The study of LDPs is partly motivated by the following. The LDP describes the limiting behavior of the laws of solutions when the noise in the equation converges to zero, in terms of a rate function. It allows researchers to identify and analyze rare but extreme events. The LDP problems arise in the theory of statistical
inference quite naturally. Large deviation techniques
are precious tools to study the exit time from a neighborhood of an asymptotically stable equilibrium point, the exit place determination or the transition between two equilibrium
points in randomly perturbed dynamical systems, which is important in the fields of statistical
and quantum mechanics, chemical reactions etc, see e.g. \cite{23, 31}.

There are many papers on LDPs for SRDEs driven by space-time white noise, and all of those papers concerned the case of bounded domains. The LDP for SRDEs was first obtained by  Freidlin \cite{F}, in which the space variable takes value on a unit circle, with $b$ assumed to be globally Lipschitzian and of linear growth, and $\sigma$ being a constant. Later in \cite{Sowers}  Sowers improved the result in \cite{F} to the case that $\sigma$ is globally Lipschitzian.
Cerrai and R\"ockner \cite{CR} obtained large deviation estimates for SRDEs with globally Lipschitz but unbounded diffusion coefficients, while assuming the reaction terms to be only locally Lipschitz with polynomial growth, and the LDP could be  uniform over bounded sets of initial data. Salins \cite{Salins} recently obtained uniform large deviations results for  SRDEs in a more general setting than Cerrai and R\"ockner’s work, where the LDP is uniform over unbounded sets of initial data. The assumptions on the reaction terms are relaxed to be written as the sum of a decreasing function and a Lipschitz continuous function, thereby removing the assumptions in \cite{CR} about the local Lipschitz continuity and polynomial growth rate of the reaction terms. Additional references on LDPs for SRDEs driven by space-time white noise on bounded domains can be found in the aforementioned papers.  However, to the best of our knowledge, there are no results on LDPs for SRDEs on unbounded domain driven by space-time white noise, even for the case that the coefficients $b$ and $\sigma$ are globally Lipschitz.  The dearth of existing research on the LDP for SRDEs on unbounded domains driven by space-time white noise provides a strong impetus for our investigation and this paper presents the first
result on this topic.

The study of SRDEs on unbounded domain driven by space time white noise poses greater challenges than the case of bounded domain and/or coloured noise.
As is well known, the solutions to SRDEs driven by space-time white noise like (\ref{eqn1}) usually are not semi-martingales, and therefore in particular It$\mathrm{\hat{o}}$'s formula could not be used.
In fact, even for the  simplest case: $b=0$, $\sigma=1$ and $u_0=0$, the supremum norm of the solution will explode in the following sense,
\begin{align}\label{230421.1116}
	\mathbb{P}\left(\sup_{x\in\mathbb{R}}|u(t,x)|=\infty\right)=1, \quad \forall\ t>0.
\end{align}
This phenomenon renders the usual stopping time argument, commonly used in the study of bounded domains, ineffective for equation (\ref{eqn1}).

The purpose of this paper is to investigate LDPs for (\ref{eqn1}). Our main result is based on the assumptions that the drift term $b$ can be locally Log-Lipschitz (see (\textbf{H1(b)})) and superlinear, and the well-posedness of (\ref{eqn1}) under the same assumptions is established in \cite{SZ}.
A typical example satisfying these conditions is $b(u)=u\log|u|$ for $u\in\mathbb{R}$.
As we point out above, since the space variable of the equation is on the real line $\mathbb{R}$ rather than a finite interval,
the supremum of solutions over $\mathbb{R}$ explodes (see  (\ref{230421.1116})).
Therefore, the usual stopping time argument (see e.g. \cite{DKZ}) could not be applied here.
To overcome this difficulty,
we appeal to a special designed norm initially raised in \cite{SZ}, of form
\begin{align}
	\sup_{t\leq T, x\in\mathbb{R}} \left(|u(t,x)|e^{-\lambda |x|e^{\beta t}}\right)
\end{align}
where, unlike the usual norm on $C_{tem}(\mathbb{R})$, the exponent also depends on time $t$ in a particular way. Then we can establish some necessary estimates under this norm.
Moreover, the logarithmic nonlinearity forces us to deal with moments of order one, which are harder to estimate than high-order moments.
It is noteworthy that our work does not impose assumptions on the dissipativity of $b$, making it essentially different from existing results (see e.g. \cite{CR,Salins}), where $b$ is locally Lipschitz continuous with polynomial growth and satisfies certain dissipative conditions, such as $b(z)=z-z^3$ which has the effect of “pulling the solution back toward the origin”. Obtaining the main result in this paper is challenging due to the setting of unbounded domain, the presence of  space-time white noise, and the superlinear drift term without dissipation.

To obtain the LDP, we adopt the weak convergence approach
and apply the criterion given in \cite{MSZ}, an adaption of the classical criteria of Budhiraja-Dupuis \cite{BD2008}.
To this end, we first establish the well-posedness of the corresponding skeleton equation.
Secondly, we show the strong continuity of deterministic skeleton equations with respect to controls under the uniform norm. The novelty is that we truncate the space $\mathbb{R}$ into a bounded domain such that outside this bounded domain the controlled term is sufficiently small, and in that bounded domain we exploit the uniform equicontinuity of the controlled term with respect to time and space variables to deduce a finite sum, such that we can use the weak convergence of controls.
Finally,  we prove the convergence to zero in probability of differences between the stochastic controlled equations and the random skeleton equations, as the noise intensity tends to zero.
Due to the setting of unbounded domain, the  space-time white noise, and the superlinear drift term without dissipation, technical difficulties mentioned above
have to be overcomed in the proof. To overcome these
difficulties, the special designed norm on $C([0,T], C_{tem}(\mathbb{R}))$,
one order moment estimates of the stochastic convolution and two nonlinear Gronwall-type inequalities play an important role. The whole program is non-trival.

Finally, we point out that there are a few results on the well-posedness of SRDEs driven by space-time white noise on unbounded domains; see \cite{Mueller, DaZ96, RZ99, AM03, MP11, MMP14, MN, FN21, MMR21}. In all of these literature, the coefficient $b$ satisfies certain dissipative conditions or has linear growth to ensure a global solution.
We believe that the ideas proposed in this paper
could be adapted to obtain the LDPs for SRDEs driven by space-time white noise on unbounded domains concerned in the papers mentioned above, which appear also to be open problems.


The rest of the paper is organized as follows. In Section 2, we present the framework and recall the large deviation theory, and then we state the main results of this paper and sketch their proofs. Section 3 is devoted to demonstrating the existence and uniqueness of the skeleton equations. In Section 4, we verify the two sufficient conditions in the weak convergence approach under locally Log-Lipschitz and logarithmic growth conditions on the drift $b$.  Finally, in Section 5, we complement the proof of the LDP for SRDEs under globally Lipschitz conditions.

\vskip 0.5cm

Convention on constants. Throughout the paper $C$ denotes a positive constant whose value may change from line to line. All other constants will be denoted by $C_1$, $C_2$, ... They are all positive and their precise values are not important. The dependence of constants on parameters if needed will be indicated, e.g, $C_T$, $C_{\lambda}$.

\section{Preliminaries and Main Results}\label{sec 2}
\setcounter{equation}{0}

In this section we first introduce some notations and collect a number of
preliminary results. Afterwards, we will present the main result of this paper along with an outline of its proof.

\subsection{Preliminaries}
For any $\lambda>0$ and $f\in C(\mathbb{R})$, denote
\begin{align*}
	|f|_{(-\lambda)}:=\sup_{x\in\mathbb{R}} \left(|f(x)|e^{-\lambda |x|}\right).
\end{align*}
Let
\begin{align*}
	C_{tem}:=\{f\in C(\mathbb{R}): |f|_{(-\lambda)}<\infty \text{ for any } \lambda >0 \}.
\end{align*}
The set $C_{tem}$ endowed with the metric
\begin{align*}
	d(f,g):=\sum_{n=1}^{\infty} \frac{1}{2^n} \min\{ 1,|f-g|_{(-\frac{1}{n})} \}
\end{align*}
is a Polish space. It is easy to see that for any $f,f_{n}\in C_{tem},n\ge 1$, $\lim_{n\rightarrow\infty}d(f_{n},f)= 0$ iff $\lim_{n\rightarrow\infty}|f_n-f|_{(-\lambda)}= 0$ for all $\lambda>0$.
Denote $C([0,T], C_{tem})$ by the space of functions $F:[0,T]\rightarrow C_{tem}$, continuous with respect to the metric of $C_{tem}$. The space $C([0,T], C_{tem})$ endowed  with the metric
\begin{align}
	\tilde{d}(F, G) = \sup_{t\leq T} d\big(F(t), G(t)\big)
\end{align}
is also a Polish space. Moreover,
\begin{align}
	\lim_{n\rightarrow \infty}\tilde{d}(F_n, F)= 0 \Longleftrightarrow \lim_{n\rightarrow \infty}\sup_{t\leq T , x\in \mathbb{R}} \big( |F_n(t,x) - F(t,x)| e^{-\lambda|x|} \big) =0 , \quad \forall\ \lambda>0.
\end{align}
Let $C([0,T]\times\mathbb{R},\mathbb{R})$ be the space of all real-valued continuous functions on $[0,T]\times\mathbb{R}$. Endowed with the topology of convergence on every compact subsets of $[0,T]\times\mathbb{R}$, $C([0,T]\times\mathbb{R},\mathbb{R})$ is a Polish space.

\vskip 0.5cm

In this paper we impose the following assumptions on the functions $b:\mathbb{R}\rightarrow \mathbb{R}$ and $\sigma:\mathbb{R}\rightarrow \mathbb{R}$.

\textbf{Hypothesis 0}

(\textbf{H0(a)})
The functions $b$ and $\sigma$ are  linear growth, that is, there exists a negative constant $L$ such that
	\begin{align}\label{221126.2141-01}
		|b(u)| + |\sigma(u)| \leq L(1+|u|),  \quad \forall\ u\in\mathbb{R}.
	\end{align}

(\textbf{H0(b)})
The functions $b$ and $\sigma$ are Lipschitz continuous, that is, there exists a negative constant $L$ such that
	\begin{align}\label{221126.2141}
		|b(u)-b(v)|+|\sigma(u)-\sigma(v)|\leq L|u-v|, \quad \forall\ u,v\in\mathbb{R}.
	\end{align}

Set $\log_+ u=\log(u\vee 1)$ for any $u\ge 0$.

\textbf{Hypothesis 1}

(\textbf{H1(a)}) The function $b$ is continuous, and there exist nonnegative constants $c_1$ and $c_2$ such that, for any $u\in\mathbb{R}$,
\begin{align*}
	|b(u)|\le c_1|u| \log_+ |u|+c_2.
\end{align*}

(\textbf{H1(b)}) There exist nonnegative constants $c_3$, $c_4$ and  $c_5$ such that, for any $u, v\in\mathbb{R}$,
\begin{align*}
	|b(u)-b(v)|\le c_3|u-v|\log_+\frac{1}{|u-v|}+c_4\log_+(|u|\vee|v|)\,|u-v|+c_5|u-v|.
\end{align*}

(\textbf{H1(c)}) The function $\sigma$ is globally Lipschitz and bounded, that is, there exist nonnegative constants $L_\sigma$ and $ K_\sigma$ such that, for any $u, v\in\mathbb{R}$
\begin{align*}
	&|\sigma(u)-\sigma(v)|\le L_\sigma |u-v|,\\
	&|\sigma(u)|\le K_\sigma.
\end{align*}

\begin{remark}
A straightforward calculation shows that (\textbf{H0(b)}) and (\textbf{H1(b)}) imply (\textbf{H0(a)}) and (\textbf{H1(a)}), respectively.
\end{remark}
\begin{remark}
A typical example of
function $b$ that satisfies (\textbf{H1(b)}) is the
	the function $x\in\mathbb{R}\mapsto x\log|x|\in\mathbb{R}$, more precisely,
	\begin{align*}
		\lvert x\log|x|-y\log|y|\rvert \le |x-y|\log\frac{1}{|x-y|}+\left[\log_+(|x|\vee|y|)+1+\log2\right] |x-y|,\quad \forall\  x,y\in\mathbb{R}.
	\end{align*}
See \cite[Example 2.3]{SZ} for more details.
\end{remark}

Now, let us introduce the definition of the solution to (\ref{eqn1}).

\begin{definition}\label{def solution}
	A random field solution to the equation (\ref{eqn1}) is a jointly measurable and $(\mathcal{F}_t)$-adapted space-time process $u^\epsilon:=\{u^\epsilon(t,x): \,(t,x)\in[0,T]\times\mathbb{R}\}$ such that, for every $(t,x)\in[0,T]\times\mathbb{R}$,
	\begin{align}
		u^\epsilon(t,x)=&P_tu_0(x)+\int_{0}^{t}\int_{\mathbb{R}} p_{t-s}(x,y)b(u^\epsilon(s,y))\mathrm{d}s\mathrm{d}y\nonumber\\
		&+\int_{0}^{t} \int_{\mathbb{R}}p_{t-s}(x,y)\sigma(u^\epsilon(s,y))W(\mathrm{d}s,\mathrm{d}y),\quad \mathbb{P}\text{-a.s.,}
	\end{align}
where $p_{t}(x,y):=\frac{1}{\sqrt{2\pi t}}e^{-\frac{(x-y)^2}{2t}}$, and $\{P_t\}_{t\ge 0}$ is the corresponding heat semigroup on $\mathbb{R}$. 
\end{definition}
\begin{remark}
	The above mild form is equivalent to  the weak solution to (\ref{eqn1}) in the sense of PDE; see \cite{Shiga,Iwata}.
\end{remark}

Given this definition, we have the following result; see \cite[Theorem 2.5]{SZ} and \cite[Theorem 2.2]{Shiga}.

\begin{proposition} \label{wellposedeqn1}
	Assume that $u_0\in C_{tem}$ and one of the following two conditions holds:

\begin{center}
(1) {\rm(\textbf{H0(b)})}  holds. \ \ \ \ \   (2) {\rm(\textbf{H1(b)})} and {\rm(\textbf{H1(c)})} hold.
\end{center}
Then
there exists a unique solution $u^\epsilon$ to the equation {\rm(\ref{eqn1})} in the sense of  Definition \ref{def solution}. Moreover,
$$
u^\epsilon\in C([0,T],C_{tem}),\ \ \mathbb{P}\text{-a.s.}
$$
\end{proposition}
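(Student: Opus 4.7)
The plan is to treat the two hypotheses in turn. Since $\sqrt{\epsilon}$ can be absorbed into $\sigma$ without changing any of the assumptions, I fix $\epsilon=1$. Under (H0(b)) a Picard iteration in the weighted norm $|\cdot|_{(-\lambda)}$ closes by itself, whereas under (H1(b))+(H1(c)) I would first truncate the drift to reduce to the Lipschitz case, and then derive a uniform-in-truncation estimate in a time-dependent weighted norm. Consider throughout the operator $\mathcal{T}(u)(t,x) = P_t u_0(x) + \int_0^t\!\int_{\mathbb{R}} p_{t-s}(x,y) b(u(s,y))\,ds\,dy + \int_0^t\!\int_{\mathbb{R}} p_{t-s}(x,y) \sigma(u(s,y))\,W(ds,dy)$; a mild solution is a fixed point of $\mathcal{T}$.

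For the Lipschitz case (H0(b)), the core ingredient is the kernel estimate $\int_{\mathbb{R}} p_t(x,y)\,e^{\lambda|y|}\,dy \le C_\lambda\,e^{\lambda|x|+\lambda^2 t/2}$, together with its square version $\int_0^t\!\int_{\mathbb{R}} p_{t-s}^2(x,y)\,e^{-2\lambda|y|}\,dy\,ds < \infty$. Combined with (H0(a))--(H0(b)) these give (i) $P_\cdot u_0 \in C([0,T], C_{tem})$; (ii) a Lipschitz bound on the deterministic convolution of the form $|\mathcal{T}^{(1)}(u)-\mathcal{T}^{(1)}(v)|_{(-\lambda)}(t) \le C_{\lambda,T}\int_0^t |u(s)-v(s)|_{(-\lambda)}\,ds$; and (iii) a Burkholder--Davis--Gundy estimate on the stochastic convolution in the same weighted norm and in expectation. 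Iterating $u^{(k+1)} = \mathcal{T}(u^{(k)})$ and applying standard Gronwall yields a Cauchy sequence in $C([0,T], C_{tem})$ whose limit is the unique mild solution, recovering Shiga's argument.

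For the Log-Lipschitz case (H1(b))+(H1(c)), set $b_N(u) := b\bigl((-N)\vee u \wedge N\bigr)$, which is globally Lipschitz, so the previous step yields a unique solution $u^N\in C([0,T], C_{tem})$ to (1.1) with $b$ replaced by $b_N$. The decisive step is to bound $u^N$ uniformly in $N$ using the time-dependent weighted norm $\|u\|_{\lambda,\beta,T} := \sup_{t\le T,\,x\in\mathbb{R}} |u(t,x)|\,e^{-\lambda|x|e^{\beta t}}$, with $\beta$ chosen large enough to absorb the heat-kernel spreading. From $|b_N(u)| \le c_1 |u|\log_+|u| + c_2$, which follows from (H1(a)) uniformly in $N$, plus a first-moment estimate on the stochastic convolution in this norm, one derives an Osgood--Bihari type inequality
$$
\|u^N\|_{\lambda,\beta,t} \le A + B\int_0^t \bigl(1+\|u^N\|_{\lambda,\beta,s}\bigr)\log_+\!\bigl(1+\|u^N\|_{\lambda,\beta,s}\bigr)\,ds,
$$
where $A$ is $\mathbb{P}$-a.s.\ finite and independent of $N$. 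A nonlinear Gronwall inequality then forces $\|u^N\|_{\lambda,\beta,T}$ to stay bounded independently of $N$, so that on every compact in $x$ the truncation is eventually inactive; the sequence $u^N$ stabilizes and its limit solves (1.1). Uniqueness is obtained by applying a companion nonlinear Gronwall to the difference of two solutions, using the (H1(b)) modulus $x\log_+(1/x)$.

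The hard part is the interplay between the unboundedness of $\sup_x|u(t,x)|$ and the superlinear, non-dissipative drift $b(u)\sim u\log|u|$: standard spatial stopping-time arguments are unavailable, and polynomial moment estimates cannot be closed against a $u\log|u|$ nonlinearity. The two novelties that resolve this are the time-dependent weight $e^{\beta t}$ inside the norm, tuned to the heat semigroup, and the first-moment (rather than higher-moment) control of the stochastic convolution in that norm, which is the sharp scale of integrability needed to combine with the Osgood--Bihari Gronwall. These devices together make both existence and uniqueness go through.
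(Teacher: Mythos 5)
The paper does not actually prove this proposition; it simply cites \cite[Theorem 2.2]{Shiga} for the Lipschitz case and \cite[Theorem 2.5]{SZ} for the super-linear case, and the closest in-paper analogue of the argument is the treatment of the skeleton equation in Section 3. Your outline for the Lipschitz case and your identification of the key devices (the time-dependent weight $e^{-\lambda|x|e^{\beta t}}$, one-order moment control of the stochastic convolution, nonlinear Gronwall) are consistent with that scheme. However, your super-linear step has two genuine gaps.

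First, the truncation $b_N(u):=b((-N)\vee u\wedge N)$ is \emph{not} globally Lipschitz under (\textbf{H1(b)}): the modulus $c_3|u-v|\log_+\frac{1}{|u-v|}$ survives restriction to $[-N,N]$ (for $b(u)=u\log|u|$ the derivative blows up at $0$), so your first step cannot be applied to the truncated equation. This is precisely why the paper (following \cite{SZ}) defines $b_n$ by \emph{mollification} with a kernel at scale $1/n$ combined with a cutoff $\eta_n$, as in (\ref{eq bn})--(\ref{eq sigma n}); only then are the approximating coefficients genuinely Lipschitz, at the price of having to verify the convergence property (\ref{bnsigmanconv}). Second, your passage to the limit via ``on every compact in $x$ the truncation is eventually inactive; the sequence $u^N$ stabilizes'' does not work. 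The uniform bound you obtain controls only $\sup_{t,x}|u^N(t,x)|e^{-\lambda|x|e^{\beta t}}$, so $|u^N(t,x)|$ may exceed any fixed level $N$ for $|x|$ large; since the mild formulation convolves $b_N(u^N(s,y))$ against $p_{t-s}(x,y)$ over \emph{all} $y\in\mathbb{R}$, the truncation is active somewhere for every finite $N$ and $u^N$ need not coincide with a solution of the untruncated equation on any compact set. The correct route (used in Section 3 for the skeleton equation and in \cite{SZ} for the SPDE) is to prove pre-compactness of $\{u^n\}$ in $C([0,T],C_{tem})$ via equicontinuity estimates of the type in Lemma \ref{UnVn} together with the criterion of Proposition \ref{criteria}, extract a convergent subsequence, and identify the limit using uniform integrability and the Vitali convergence theorem. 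Your uniqueness sketch via the nonlinear Gronwall Lemma \ref{Gronwall2} is the right idea, but it too relies on the one-order moment estimate for the stochastic convolution that you only allude to.
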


\vskip 0.5cm

\subsection{Main Results}

%

Large deviations is concerned with exponential decay of probabilities of rare events. The aim of this paper is to establish LDP for the solutions $\{u^\epsilon,\epsilon> 0\}$ to {\rm(\ref{eqn1})} as $\epsilon$ tends to 0. To formulate our results in this paper, we first recall the definition of LDP. 

\begin{definition}
	(Rate function) A function $I(\cdot):C([0,T],C_{tem})\to [0,\infty]$ is called a rate function if for every constant $M<\infty$, $\{z\in C([0,T],C_{tem}):I(z)\le M\}$ is a compact subset of $C([0,T],C_{tem})$.
\end{definition}
It is well-known that $I$ is lower semi-continuous on the space $C([0,T],C_{tem})$. For a
subset $A\subset C([0,T],C_{tem})$, set $I(A):= \inf_{z\in A} I(z)$.

\begin{definition}
	(Large deviation principle) The solutions $\{u^\epsilon,\epsilon> 0\}$ to {\rm(\ref{eqn1})} is said to satisfy a LDP on $C([0,T],C_{tem})$ with the rate function $I$ if the following two conditions hold:
	
	(a) (large deviation upper bound) for each closed subset $F$ of $C([0,T],C_{tem})$,
	\begin{align*}
		\limsup_{\epsilon\to0} \epsilon\log \mathbb{P}(u^\epsilon\in F)\le -I(F).
	\end{align*}

	(b) (large deviation lower bound) for each open subset $G$ of $C([0,T],C_{tem})$,
	\begin{align*}
		\liminf_{\epsilon\to0} \epsilon\log \mathbb{P}(u^\epsilon\in G)\ge -I(G).
	\end{align*}
\end{definition}

For simplicity, let $\mathcal{H}:=L^2([0,T]\times\mathbb{R})$  equipped with  the usual $L^2$ norm, denoted by $|\cdot|_{\mathcal{H}}$, i.e.
$$
|h|_{\mathcal{H}}=\left( \int_{0}^{T}\int_{\mathbb{R}}|h(t,x)|^2\mathrm{d}t\mathrm{d}x \right) ^{\frac{1}{2}}.
$$
And set
\begin{align*}
	\mathcal{H}_N:=\left\{ h\in \mathcal{H}: |h|_{\mathcal{H}}=\left( \int_{0}^{T}\int_{\mathbb{R}}|h(t,x)|^2\mathrm{d}t\mathrm{d}x \right) ^{\frac{1}{2}}\leq N \right\}
\end{align*}
for any $N>0$. It is known that $\mathcal{H}_N$ endowed with weak topology is a Polish space. We then define a collection of random fields
\begin{align*}
	\mathcal{X}:=&\left\{h:[0,T]\times\mathbb{R}\times \Omega\rightarrow\mathbb{R}:h\text{ is a } \mathcal{F}_t \text{-predictable random field with }\right.\\
	&\ \left.h(\omega)\in \mathcal{H} \text{ and }|h(\omega)|_{\mathcal{H}}<\infty,\ \  \mathbb{P}\text{-a.e.}  \right\}.
\end{align*}
Also, for each $N>0$ define
\begin{align}\label{X_N}
	\mathcal{X}_N:=\left\{ h\in\mathcal{X} :|h(\omega)|_{\mathcal{H}}\le N,\ \  \mathbb{P}\text{-a.e.} \right\}.
\end{align}

We introduce the following so-called skeleton equation: for any $h\in \mathcal{H}$,
\begin{align}\label{ske0}
	\begin{cases}
	\mathrm{d}Y^h(t,x)= \frac{1}{2}\Delta Y^h(t,x)\mathrm{d}t+b(Y^h(t,x))\mathrm{d}t+\sigma(Y^h(t,x))h(t,x)\mathrm{d}t,\ t\in[0,T], \ x\in\mathbb{R},\\
	Y^h(0,x)=u_0(x)\in C_{tem}.
	\end{cases}
\end{align}
In Section \ref{221107.1422}, we will prove that there exists a unique solution $Y^h\in C([0,T],C_{tem})$ to (\ref{ske0}).

Now, we state the main result in this paper.

\begin{theorem}\label{221107.1426}
Assume the hypotheses of Proposition \ref{wellposedeqn1}. The solutions
 $\{u^\epsilon,\epsilon>0\}$ to {\rm(\ref{eqn1})} satisfy a LDP on $C([0,T],C_{tem})$ as $\epsilon$ tends to 0 with the rate function $I$ given by
 	\begin{align}\label{ratef}
		I(f):=\inf_{\left\{h\in\mathcal{H}:f=Y^h \right\}} \left\{\frac{1}{2} \int_{0}^{T}\int_{\mathbb{R} } |h(t,x)|^2\mathrm{d}x\mathrm{d}t\right\}, \quad \forall\ f\in C([0,T], C_{tem}),
	\end{align}
with the convention $\inf{\emptyset}=\infty$, here $Y^h$ solves \eqref{ske0}.
\end{theorem}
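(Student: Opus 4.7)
The plan is to apply the weak convergence criterion of \cite{MSZ}, an adaptation of the Budhiraja--Dupuis framework tailored to space-time white noise SPDEs on $C([0,T], C_{tem})$. This reduces the LDP with rate function \eqref{ratef} to verifying two conditions: (i) for each $N > 0$, the skeleton map $h \mapsto Y^h$ (well-posed by Section 3) is continuous from $\mathcal{H}_N$ with its weak $L^2$-topology into $C([0,T], C_{tem})$; and (ii) for any family $h^\epsilon \in \mathcal{X}_N$ converging in distribution to $h \in \mathcal{H}_N$ in the weak topology, the solution $Y^{\epsilon, h^\epsilon}$ of the controlled SPDE, obtained from \eqref{eqn1} by Girsanov with additional drift $\sigma(\cdot) h^\epsilon$, converges to $Y^h$ in distribution in $C([0,T], C_{tem})$.

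For (i), given $h_n \rightharpoonup h$ in $\mathcal{H}_N$, I would first establish a uniform a priori bound for $Y^{h_n}$ in the time-weighted seminorm $\sup_{t \le T,\, x \in \mathbb{R}} \bigl(|Y^{h_n}(t,x)| e^{-\lambda |x| e^{\beta t}}\bigr)$ from \cite{SZ}, leveraging the logarithmic growth \textbf{(H1(a))} and the boundedness of $\sigma$ from \textbf{(H1(c))}; the control contribution is absorbed via Cauchy--Schwarz by $|h_n|_{\mathcal{H}}\le N$. Writing out the mild form of $Y^{h_n} - Y^h$, the drift difference is handled by \textbf{(H1(b))} together with a nonlinear (logarithmic) Gronwall inequality adapted to the weighted seminorm. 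The key novelty, highlighted in the introduction, is the control-dependent term $\int_0^t\!\int_{\mathbb{R}} p_{t-s}(x,y)\bigl[\sigma(Y^{h_n}(s,y))h_n(s,y) - \sigma(Y^h(s,y))h(s,y)\bigr]\,dy\,ds$: I would truncate outside a large interval $[-R, R]$, where the Gaussian decay of $p_{t-s}$ dominates the exponential weight and the uniform bound on $|h_n|_{\mathcal{H}}$ renders the tail negligible, and on $[0,T]\times[-R, R]$ exploit uniform equicontinuity of $(s,y) \mapsto p_{t-s}(x,y)\sigma(Y^h(s,y))$ to approximate the integral by a finite Riemann sum, a finite linear functional of $h_n$ to which weak $L^2$-convergence applies.

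For (ii), I would prove the stronger statement $Y^{\epsilon, h^\epsilon} - Y^{h^\epsilon} \to 0$ in probability in the metric $\tilde{d}$; combined with (i) this yields convergence in distribution. Setting $Z^\epsilon := Y^{\epsilon, h^\epsilon} - Y^{h^\epsilon}$ and subtracting the two mild equations, one isolates an $O(\sqrt{\epsilon})$ stochastic-convolution term together with drift and control-drift differences. The decisive input is a first-order moment estimate of the stochastic convolution against the bounded $\sigma$ in the weighted seminorm; coupled with the local log-Lipschitz bound \textbf{(H1(b))} and the second nonlinear Gronwall-type inequality promised in the introduction, this lets one close the estimate and send $\epsilon \downarrow 0$.

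The main obstacle, in my view, is the interaction between the unbounded spatial domain and the superlinear, non-dissipative drift: as \eqref{230421.1116} shows, the uniform norm of even the free solution explodes $\mathbb{P}$-a.s., so no stopping-time argument at a level set of the sup norm is available. The time-dependent weight $e^{-\lambda |x| e^{\beta t}}$ is engineered so that the heat kernel acts contractively on it for appropriate $\lambda, \beta$, but a drift of order $u \log |u|$ only respects this weight modulo a logarithmic correction; everything then hinges on carefully matching the growth rate $\beta$, the logarithmic modulus in \textbf{(H1(b))}, and the exponents appearing in the two nonlinear Gronwall inequalities, keeping in mind that only first-moment (not $L^p$) estimates survive the logarithmic structure.
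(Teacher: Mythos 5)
Your proposal follows essentially the same route as the paper: the reduction via the Budhiraja--Dupuis/MSZ weak convergence criterion to the two claims (continuity of the skeleton map under weak convergence of controls, proved by truncation in space plus equicontinuity to reduce to finitely many linear functionals of $h_n$; and convergence in probability of $X^{\epsilon,h_\epsilon}-Y^{h_\epsilon}$ via first-moment estimates of the stochastic convolution, the time-dependent weight $e^{-\lambda|x|e^{\beta t}}$, and the two nonlinear Gronwall lemmas) is exactly the paper's argument in Sections 2--4. The only point you leave implicit is that the theorem also covers the globally Lipschitz case \textbf{(H0(b))} with possibly unbounded $\sigma$, where boundedness of $\sigma$ can no longer be invoked; the paper treats this separately (Section 5) by a simpler second-moment argument with an iterated Gronwall inequality, and your sketch should acknowledge that branch.
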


\begin{proof}[Proofs of Theorem \ref{221107.1426}]

The proof is divided into  two parts. 	

The first part is to prove the well-posedness of the skeleton equation (\ref{ske0}), denoted by $Y^h$; see Proposition \ref{Propo well posed SE 02} in Section \ref{221107.1422}. Therefore, there exists a mapping
$\mathcal{G}^0:C([0,T]\times\mathbb{R},\mathbb{R})\to C([0,T],C_{tem})$ such that for each $h\in\mathcal{H}$, $\mathcal{G}^0(\text{Int}(h))=Y^h$.
Here, for any $h\in\mathcal{H}$,
\begin{align}\label{221105.2016}
	\text{Int}(h)(t,x):= \int_{0}^{t}\int_{0}^{x}h(s,y)\mathrm{d}s\mathrm{d}y, \quad \forall\  (t,x)\in [0,T]\times \mathbb{R},
\end{align}
which induces a mapping denoted by Int from $\mathcal{H}$ to $C([0,T]\times \mathbb{R},\mathbb{R})$.
%


Now we state the second part.

We denote the product space of countably infinite copies of the real line by $\mathbb{R}^\infty$. Endowed with the topology of coordinate-wise convergence, $\mathbb{R}^\infty$ is a Polish space. Thus a sequence of independent standard Brownian motions $\beta=\{\beta_i\}_{i\in\mathbb{N}}$ can be regarded as a random variable with values in $C([0,T], \mathbb{R}^\infty)$.  Similarly, the Brownian sheet corresponding to the space-time white noise on $[0,T]\times\mathbb{R}$ can be regarded as a random variable with values in $C([0,T]\times\mathbb{R},\mathbb{R})$.
Let $\{e_i\}_{i=1}^{\infty}$ be an orthonormal basis in $L^2(\mathbb{R})$.
Given any Brownian sheet $\{W(t,x): (t,x)\in [0,T]\times\mathbb{R}\}$,
$\beta=\{\beta_i\}_{i\in\mathbb{N}}$ defined by
\begin{align}\label{230413.1626}
	\beta_i(t)= \int_0^t\int_{\mathbb{R}}e_i(y)W(\mathrm{d}s,\mathrm{d}y).
\end{align}
is a sequence of independent standard Brownian motions, where $W(\mathrm{d}s, \mathrm{d}y)$ is the space-time white noise correpsonding to the Brownian sheet $W$. Conversely,
given any sequence of independent standard Brownian motions $\beta=\{\beta_i\}_{i\in\mathbb{N}}$, the following random field
\begin{align}
	W(t,x)=\sum_{i=1}^{\infty} \beta_i(t)\int_0^x e_i(y)\mathrm{d}y,\quad (t,x)\in [0,T]\times\mathbb{R}.
\end{align}
is a Brownian sheet, and the series above converges in $L^2(\Omega)$ for each $(t,x)$.
By the similar arguments as in Proposition 3 of \cite{BD2008} (see also Proposition 11.9 in \cite{BD2019}), we can deduce that there exists a measurable map $g: C([0,T], \mathbb{R}^\infty)\rightarrow C([0,T]\times\mathbb{R},\mathbb{R})$ such that $W=g(\beta)$ a.s., where $\beta=\{\beta_i\}_{i\in\mathbb{N}}$ is defined by (\ref{230413.1626}).

By the Yamada-Watanabe theorem and Proposition \ref{wellposedeqn1},  there exists a measurable mapping $\mathcal{G}^\epsilon:C([0,T]\times\mathbb{R},\mathbb{R})\to C([0,T],C_{tem})$ such that $\mathcal{G}^\epsilon(W)=u^\epsilon$, which is the unique solution of \eqref{eqn1}. Applying the Girsanov theorem (see Theorem 1.6 in \cite{MN} or Theorem 10.14 in \cite{DZ2014}),
for any $N>0$ and $h_\epsilon\in \mathcal{X}_N$, the random field
\begin{align}
	\widetilde W(t,x):= W(t,x) + \frac{1}{\sqrt{\epsilon}}\int_0^t\int_0^x h_{\epsilon}(s,y)\mathrm{d}s\mathrm{d}y
\end{align}
is a Brownian sheet under a new probability measure, which is mutually absolute continuity with respect to $\mathbb{P}$.
%
%
Therefore, $X^{\epsilon,h_\epsilon}:=\mathcal{G}^\epsilon(\widetilde W)$ satisfies  the following stochastic equation $\mathbb{P}$-a.s.
\begin{eqnarray}\label{eqnhe}
	X^{\epsilon,h_\epsilon}(t,x)\!\!\!\!&=&\!\!\!\!P_tu_0(x)+\int_{0}^{t}\int_{\mathbb{R}}p_{t-s}(x,y)b(X^{\epsilon,h_\epsilon}(s,y))\mathrm{d}s\mathrm{d}y\nonumber\\
            &&+
            \sqrt{\epsilon}\int_{0}^{t}\int_{\mathbb{R}}p_{t-s}(x,y)\sigma(X^{\epsilon,h_\epsilon}(s,y))W(\mathrm{d}s,\mathrm{d}y) \nonumber\\
	&&+
  \int_{0}^{t}\int_{\mathbb{R}}p_{t-s}(x,y)\sigma(X^{\epsilon,h_\epsilon}(s,y))h_\epsilon(s,y)\mathrm{d}s\mathrm{d}y.
\end{eqnarray}

Using the  arguments similar to those in \cite{BD2008} (see also Chapter 11 in \cite{BD2019})
and Theorem 3.2 in \cite{MSZ}, Theorem \ref{221107.1426} is established once we have proved the following two claims:

(\textbf{C1}) For any $N>0$, $\{h_n\}_{n\in\mathbb{N}}\subset \mathcal{H}_N$ and $h\in\mathcal{H}_N$ with $h_n\to h$ weakly in $\mathcal{H}$ as $n\to\infty$, then $\lim_{n\rightarrow\infty}\mathcal{G}^0(\text{Int}(h_n))=\mathcal{G}^0(\text{Int}(h))$ in $C([0,T],C_{tem})$, where the mapping $\text{Int}$ is defined by (\ref{221105.2016}).
%

(\textbf{C2}) For any $N>0$, $\{h_\epsilon,\epsilon>0\}\subset \mathcal{X}_N$ and $\delta>0$,
\begin{align*}
	\lim_{\epsilon\to 0} \mathbb{P} \left( \tilde{d}\left(X^{\epsilon,h_\epsilon},Y^{h_\epsilon}\right)>\delta  \right)=0,
\end{align*}
where $Y^{h_\epsilon}:=\mathcal{G}^{0}(\text{Int}(h_{\epsilon}))$.

The proof of Claims {\rm(\textbf{C1})} and {\rm(\textbf{C2})} will be divided into two cases. For Case 1, we assume that $u_0\in C_{tem}$, {\rm(\textbf{H1(b)})} and {\rm(\textbf{H1(c)})} hold. The proof of Claims {\rm(\textbf{C1})} and {\rm(\textbf{C2})} is given in Propositions \ref{C1} and \ref{C2} of Section \ref{221107.1429}, respectively. For Case 2, we assume that $u_0\in C_{tem}$ and {\rm(\textbf{H0(b)})} holds. The proof of Claims {\rm(\textbf{C1})} and {\rm(\textbf{C2})} is given in
 Propositions \ref{C1 Case 2} and \ref{C2 Case 2} of Section \ref{Sec 5}, respectively.

The outline of the proof of Theorem \ref{221107.1426} is complete.
\end{proof}

\section{Well-posedness for the skeleton equations (\ref{ske0})}\label{221107.1422}
\setcounter{equation}{0}

This section is devoted to establish the existence and uniqueness of solution to the skeleton equation (\ref{ske0}); see Propositions \ref{Propo well posed SE 01} and \ref{Propo well posed SE 02}.
\vskip 0.2cm

By a standard fixed point argument, we have
\begin{proposition}\label{Propo well posed SE 01}
Assume that $u_0\in C_{tem}$ and {\rm(\textbf{H0(b)})}  holds. For any $h\in\mathcal{H}$, there exists a unique solution $Y^h$ to (\ref{ske0}). Moreover, $Y^h\in C([0,T],C_{tem})$.
\end{proposition}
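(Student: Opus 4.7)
The plan is to run a Picard iteration in $C([0,T], C_{tem})$ endowed with the time-weighted norm
$$\|Y\|_{T,\lambda,\beta} := \sup_{t\leq T,\, x\in\mathbb{R}} |Y(t,x)|\,e^{-\lambda|x|e^{\beta t}}$$
alluded to in the introduction, for parameters $\lambda,\beta>0$ to be tuned. First I would define the map
$$\Psi(Y)(t,x) := P_tu_0(x) + \int_0^t\!\!\int_{\mathbb{R}} p_{t-s}(x,y)\,b(Y(s,y))\,\mathrm{d}s\,\mathrm{d}y + \int_0^t\!\!\int_{\mathbb{R}} p_{t-s}(x,y)\,\sigma(Y(s,y))\,h(s,y)\,\mathrm{d}s\,\mathrm{d}y,$$
and verify that $\Psi$ sends the closed ball $\{\|Y\|_{T,\lambda,\beta}\leq R\}$ of $C([0,T], C_{tem})$ into itself, for suitable $R=R(u_0,|h|_{\mathcal{H}},\lambda,\beta,T)$, using (H0(a)) (which follows from (H0(b))).

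The key estimates are one bound per term. For $P_tu_0$, the assumption $u_0\in C_{tem}$ together with a direct Gaussian computation gives $\|P_\cdot u_0\|_{T,\lambda,\beta}<\infty$ plus continuity in $t$. For the drift term, linear growth of $b$ and the standard identity
$$\int_{\mathbb{R}} p_{t-s}(x,y)\,e^{\lambda|y|e^{\beta s}}\,\mathrm{d}y \;\leq\; C\,\exp\!\left(\lambda|x|e^{\beta s} + \tfrac{1}{2}\lambda^2 (t-s)\,e^{2\beta s}\right)$$
allow the factor $e^{-\lambda|x|e^{\beta t}}$ to be absorbed once $\beta$ is chosen large relative to $\lambda^2$. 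For the control term, I would apply Cauchy--Schwarz,
$$\left|\int_0^t\!\!\int_{\mathbb{R}} p_{t-s}(x,y)\sigma(Y(s,y))h(s,y)\,\mathrm{d}s\,\mathrm{d}y\right| \leq |h|_{\mathcal{H}} \left(\int_0^t\!\!\int_{\mathbb{R}} p_{t-s}(x,y)^2\,|\sigma(Y(s,y))|^2\,\mathrm{d}s\,\mathrm{d}y\right)^{\!1/2},$$
and then exploit the identity $\int_{\mathbb{R}} p_{t-s}(x,y)^2\,\mathrm{d}y = (4\pi(t-s))^{-1/2}$ together with $|\sigma(Y)|\leq L(1+|Y|)$ and the weight bound to control the inner square-root by an exponential factor compatible with the norm. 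The analogous computations with $|b(Y_1)-b(Y_2)|\leq L|Y_1-Y_2|$ and $|\sigma(Y_1)-\sigma(Y_2)|\leq L|Y_1-Y_2|$ then yield
$$\|\Psi(Y_1)-\Psi(Y_2)\|_{T_0,\lambda,\beta} \;\leq\; \kappa(T_0,\lambda,\beta,|h|_{\mathcal{H}})\,\|Y_1-Y_2\|_{T_0,\lambda,\beta}$$
with $\kappa<1$ for $T_0$ small enough; iterating over time intervals of length $T_0$ gives existence and uniqueness on all of $[0,T]$.

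The main obstacle is the combined calibration of $\lambda$ and $\beta$ so that the Gaussian-tail contributions from the drift term (scaling like $e^{C\lambda^2(t-s)e^{2\beta s}}$) and from the Cauchy--Schwarz bound on the control term (scaling like $(t-s)^{-1/2}e^{C\lambda^2(t-s)e^{2\beta s}}|h|_{\mathcal{H}}$) are all dominated by the time-inflated weight $e^{-\lambda|x|e^{\beta t}}$; the choice $\beta$ large enough relative to $\lambda^2$ does the job. Continuity $t\mapsto Y^h(t,\cdot)\in C_{tem}$ follows by standard continuity of each of the three terms in the mild form (Gaussian semigroup continuity for $P_tu_0$, dominated convergence for the drift, and the Cauchy--Schwarz bound together with absolute continuity of the Lebesgue integral for the control term).
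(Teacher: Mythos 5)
Your proposal is correct and coincides with the paper's approach: the paper gives no details for Proposition \ref{Propo well posed SE 01}, asserting only that it follows ``by a standard fixed point argument,'' and your Picard iteration in the weighted sup-norm (using the kernel bounds of Lemma \ref{lem 6.3} and Cauchy--Schwarz on the control term) is precisely that argument; in the globally Lipschitz case one may even take $\beta=0$, as the paper notes in Section \ref{Sec 5}. One small correction: the contraction constant is made small by shrinking the time step $T_0$ (the drift contributes a factor $T_0$ and the control term a factor $T_0^{1/4}|h|_{\mathcal{H}}$), not by taking $\beta$ large relative to $\lambda^2$ --- enlarging $\beta$ only inflates the Gaussian constant $e^{\lambda^2 T e^{2\beta T}/2}$, which merely needs to be finite here.
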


In the following, we will prove that
\begin{proposition}\label{Propo well posed SE 02}
Let $u_0\in C_{tem}$. Assume that {\rm(\textbf{H1(b)})} and {\rm(\textbf{H1(c)})} hold.
 For any $h\in\mathcal{H}$, there exists a unique solution $Y^h$ to (\ref{ske0}). Moreover, $Y^h\in C([0,T],C_{tem})$.
\end{proposition}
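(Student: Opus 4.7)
The plan is to combine a truncation of $b$ with a weighted a priori estimate and two nonlinear Gronwall inequalities, in the spirit of the existence proof for the SPDE (\ref{eqn1}) in \cite{SZ} but simplified by the absence of a genuine stochastic integral. Throughout, I work with the time-dependent weighted quantity introduced in the introduction,
\[
\mathcal{N}_{\lambda,\beta}(f;t) := \sup_{s\leq t,\, x\in\mathbb{R}} |f(s,x)|\,\exp\bigl(-\lambda|x|e^{\beta s}\bigr),
\]
for $\lambda,\beta>0$ to be fixed.

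\textbf{Step 1 (Truncation).} For each integer $N\geq 1$, set $b_N(u):=b\bigl((-N)\vee u\wedge N\bigr)$ (or a smoothed variant). Then $b_N$ is bounded and globally Lipschitz, so (H0(b)) holds with $(b,\sigma)$ replaced by $(b_N,\sigma)$. Proposition \ref{Propo well posed SE 01} yields a unique solution $Y_N\in C([0,T],C_{tem})$ to (\ref{ske0}) with drift $b_N$.

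\textbf{Step 2 (Uniform a priori estimate).} Starting from the mild form and plugging in the weight, the heat term $P_s u_0(x)$ is absorbed by $|u_0|_{(-\lambda_0)}$ for any $\lambda_0<\lambda$ via the standard Gaussian identity. The drift term is controlled using (H1(a)) (which follows from (H1(b))) together with a heat-kernel estimate of the form
\[
\int_{\mathbb{R}} p_{t-s}(x,y)\,e^{\lambda|y|e^{\beta s}}\,dy\,\leq\, C\,e^{\lambda|x|e^{\beta t}},
\]
valid on $[0,T]$ when $\beta$ is chosen large relative to $\lambda$; this is precisely the reason for the time-dependent weight. The controlled term is bounded by $|\sigma|\leq K_\sigma$ together with Cauchy--Schwarz against $h\in\mathcal{H}$, exploiting $\|p_{t-\cdot}(x,\cdot)\|_{L^2([0,t]\times\mathbb{R})}<\infty$. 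Combining these pieces gives an estimate
\[
\mathcal{N}_{\lambda,\beta}(Y_N;t)\,\leq\, C_0+C_1\int_0^t \mathcal{N}_{\lambda,\beta}(Y_N;s)\bigl(1+\log_+\mathcal{N}_{\lambda,\beta}(Y_N;s)\bigr)\,ds
\]
with constants independent of $N$. A Bihari-type nonlinear Gronwall inequality (the first of the two promised in the abstract) then yields a uniform bound $\sup_N \mathcal{N}_{\lambda,\beta}(Y_N;T)\leq K(T,|h|_{\mathcal{H}},u_0)$.

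\textbf{Step 3 (Cauchy property and passage to the limit).} For $N,M$ large, write $Z_{N,M}:=Y_N-Y_M$. The drift difference $b_N(Y_N)-b_M(Y_M)$ splits as $[b_N(Y_N)-b(Y_N)]+[b(Y_N)-b(Y_M)]-[b_M(Y_M)-b(Y_M)]$. The first and third pieces vanish on $\{|Y_N|\leq N\}$ respectively $\{|Y_M|\leq M\}$ and, combined with the uniform bound of Step 2, contribute an error $\alpha_{N,M}\to 0$. The middle piece is estimated via (H1(b)) and the uniform bound from Step 2 (which controls the $\log_+(|Y_N|\vee|Y_M|)$ factor), while the $\sigma$-difference is handled by (H1(c)) and Cauchy--Schwarz. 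The result is
\[
\mathcal{N}_{\lambda,\beta}(Z_{N,M};t)\,\leq\,\alpha_{N,M}+C\int_0^t \mathcal{N}_{\lambda,\beta}(Z_{N,M};s)\Bigl(1+\log_+\bigl(1/\mathcal{N}_{\lambda,\beta}(Z_{N,M};s)\bigr)\Bigr)\,ds.
\]
A second, Osgood-type nonlinear Gronwall inequality then forces $\mathcal{N}_{\lambda,\beta}(Z_{N,M};T)\to 0$ as $N,M\to\infty$, so $\{Y_N\}$ is Cauchy in $C([0,T],C_{tem})$. Its limit $Y^h$ solves (\ref{ske0}) by passing to the limit in the mild equation via dominated convergence and the continuity of $b$. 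Uniqueness is obtained by the same Osgood-type argument applied to the difference of two solutions (with $\alpha=0$).

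\textbf{Main obstacle.} The crux of the argument is Step 2: the unbounded domain and the superlinear logarithmic growth of $b$ (without any dissipativity) force one to measure the solution through the time-inflated weight $e^{\beta t}$ in the exponent, and the Gaussian integrals only close up if $\beta$ is tuned correctly against $\lambda$. Unlike the classical polynomial-growth case, one cannot raise the estimate to a large power to take advantage of Jensen's inequality, because the Osgood modulus $u(1+\log_+u)$ is neither integrable nor amenable to convexity arguments; one really has to work at the level of first-order bounds, making the choice of weighted norm and the Bihari/Osgood Gronwall inequalities essential rather than cosmetic.
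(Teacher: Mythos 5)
Your overall architecture (Lipschitz approximation of $b$, uniform bound in the time-inflated weighted norm via a logarithmic Gronwall inequality, limit passage, Osgood-type uniqueness) matches the paper's, but Step 3 contains genuine gaps. First, under (\textbf{H1(b)}) the drift $b$ is only log-Lipschitz (the modulus $r\mapsto c_3 r\log_+(1/r)$ has unbounded difference quotients near $0$), so the truncation $b_N(u)=b((-N)\vee u\wedge N)$ is \emph{not} globally Lipschitz and Proposition \ref{Propo well posed SE 01} does not produce $Y_N$. The ``smoothed variant'' is therefore mandatory --- the paper mollifies and cuts off as in (\ref{eq bn}) --- but then $b_N(Y_N)-b(Y_N)$ no longer vanishes on $\{|Y_N|\le N\}$, and your error term $\alpha_{N,M}$ must be re-derived from the mollification rate (which itself uses (\textbf{H1(b)})) together with the weighted tail of $Y_N$; this is plausible but is not the argument you wrote. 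Second, the uniform bound of Step 2 does \emph{not} ``control the $\log_+(|Y_N|\vee|Y_M|)$ factor'': it only bounds $|Y_N(s,y)|e^{-\lambda|y|e^{\beta s}}$, so that factor grows like $\lambda|y|e^{\beta s}$ in $y$. This is the hardest term. The paper handles it with the kernel estimate (\ref{3.22}), which produces an extra summand $\frac{c_4}{\beta}e^{\frac{\lambda^2}{4\beta}e^{2\beta T-1}}\sup_{s\le t}(\cdots)$ on the right-hand side that can only be absorbed into the left when $T\le T^*(c_4,\lambda)$, i.e.\ for $\lambda$ small (whence the restriction to $\lambda\in(0,\lambda_T]$, plus a localization at the first time the difference reaches $e^{-1}$ so that $x\mapsto x\log_+(1/x)$ is monotone and the supremum can be pulled inside the integral). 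Your displayed Osgood inequality omits this absorbed term and the localization entirely.

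Third, Cauchy--Schwarz applied to $\int_0^t\!\int_{\mathbb{R}} p_{t-s}(x,y)\left[\sigma(Y_N)-\sigma(Y_M)\right]h\,\mathrm{d}s\,\mathrm{d}y$ yields a bound of the form $|h|_{\mathcal{H}}\bigl(\int_0^t (t-s)^{-1/2}\,g(s)^2\,\mathrm{d}s\bigr)^{1/2}$ with $g$ the weighted difference: a quadratic quantity under a square root that does not feed into the first-order Osgood--Gronwall inequality you invoke --- precisely the obstruction you yourself identify in your ``Main obstacle'' paragraph. The missing tool is the paper's Proposition \ref{moment} with $p=1$, which converts this term into $\eta\sup(\cdots)+C_\eta\int_0^t\gamma(s)\sup_y(\cdots)\,\mathrm{d}s$ with $\gamma(s)=\int_{\mathbb{R}}|h(s,y)|^2\mathrm{d}y\in L^1([0,T])$; the $\eta$-term is absorbed and the integral term is linear, so Lemma \ref{Gronwall2} applies. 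Finally, note that the paper's existence proof avoids the Cauchy argument altogether: it establishes equicontinuity of the convolutions $U_n,V_n$ (Lemma \ref{UnVn}), extracts a limit by the Arzel\`a--Ascoli-type criterion of Proposition \ref{criteria}, and identifies it via the Vitali and dominated convergence theorems. Your Cauchy route is viable in principle, but it forces you to deploy all of the uniqueness machinery (including Proposition \ref{moment} and the $T^*$ absorption) already at the existence stage, whereas the compactness route needs only the a priori bounds.
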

The proof of the above proposition is inspired by \cite{SZ}, and it is divided into two parts: Existence and Uniqueness; see Propositions \ref{221105.1302} and  \ref{221105.1303}, respectively.


\subsection{Existence}

In this subsection, we assume that

\textbf{Hypothesis 2:} (\textbf{H1(a)}) holds, $\sigma$ is continuous and $\sup_{u\in\mathbb{R}}|\sigma(u)|\le K_\sigma<\infty$.
\vskip 0.2cm

The main result of this subsection is Proposition \ref{221105.1302}, in which, under \textbf{Hypothesis 2} which is weaker than that of Proposition \ref{Propo well posed SE 02},  we prove the existence of a solution $Y^h$ to (\ref{ske0}).
To this end, we approximate the coefficients $b$ and $\sigma$ by Lipschitz continuous functions $b_n$ and $\sigma_n$ (see (\ref{eq bn}) and (\ref{eq sigma n})). And then we introduce auxiliary equations (\ref{ske1}), for which, by Proposition \ref{Propo well posed SE 01}, there exist unique solutions denoted by $Y^n, n\in\mathbb{N}$.  Then we prove the pre-compactness of $\{Y^n,n\in\mathbb{N}\}$ in $C([0,T],C_{tem})$. Finally, taking $Y$ as a limit of $\{Y^n,n\in\mathbb{N}\}$, we  identify that $Y$ is a solution to (\ref{ske0}).
To prove the second statement, a compactness criterion of a subset of $C([0,T],C_{tem})$ and some a priori estimates for $Y^n$, $U_n$ and $V_n$ play an important role; see Proposition \ref{criteria} and Lemmas \ref{u_nbdd} and \ref{UnVn}, respectively.

\vspace{0.2cm}
Now, we introduce auxiliary equations (\ref{ske1}). Let $\varphi$ be a nonnegative symmetric smooth function on $\mathbb{R}$ with $\mathrm{supp}\varphi\subset (-1,1)$ and $\int_{\mathbb{R}} \varphi(x) \mathrm{d} x=1$. Let $\left\{\eta_{n}\right\}_{n\in\mathbb{N}}$ be a sequence of symmetric smooth functions such that, for each $n \geq 1,0 \leq \eta_{n} \leq 1, \eta_{n}(x)=1$
if $|x| \leq n$, and $\eta_{n}(x)=0$ if $|x| \geq n+2$. Set
\begin{align}
		b_{n}(x) &:=n \int_{\mathbb{R}} b(y) \varphi(n(x-y)) \mathrm{d} y \,\eta_{n}(x), \label{eq bn}\\
		\sigma_{n}(x) &:=n \int_{\mathbb{R}} \sigma(y) \varphi(n(x-y)) \mathrm{d} y \, \eta_{n}(x).\label{eq sigma n}
\end{align}
It is not difficult  to check that there exist nonnegative constants $L_{n}, L_{b}$ and $K_{n}$ such that, for any $x, y \in \mathbb{R}$,
\begin{align}
		\left|b_{n}(x)-b_{n}(y)\right| & \leq L_{n}|x-y|, \label{bn1}\\
		\left|b_{n}(x)\right| & \leq c_{1}|x| \log_{+}|x|+L_{b}(|x|+1),
		\label{bn2} \\
		\left|\sigma_{n}(x)-\sigma_{n}(y)\right| & \leq K_{n}|x-y|, \label{sigman1} \\
		\left|\sigma_{n}(x)\right| & \leq K_{\sigma},\label{sigman2}
\end{align}
where $c_{1}$ is the nonnegative constant appearing in (\textbf{H1(a)}), and $L_{b}$ is independent of $n$. Moreover,
\begin{align}\label{bnsigmanconv}
	\text{if } \lim_{n\rightarrow\infty}x_{n}= x\text{ in }\mathbb{R},\text{ then } \lim_{n\rightarrow\infty}b_{n}\left(x_{n}\right) = b(x)  \text{ and }
	\lim_{n\rightarrow\infty}\sigma_{n}\left(x_{n}\right)  = \sigma(x).
\end{align}

For any fixed $h\in\mathcal{H}$ and each $n\geq 1$, we consider the following auxiliary equation,
\begin{align}\label{ske1}
		Y^{n}(t,x)=& P_{t} u_{0}(x)+\int_{0}^{t} \int_{\mathbb{R}} p_{t-s}(x, y) b_{n}\left(Y^{n}(s, y)\right) \mathrm{d}s \mathrm{d} y \nonumber\\
		&+\int_{0}^{t} \int_{\mathbb{R}} p_{t-s}(x,y) \sigma_{n}\left(Y^{n}(s, y)\right) h(s,y)\mathrm{d}s\mathrm{d}y,\quad t\in[0,T],x\in \mathbb{R}.
\end{align}
Since both $b_n$ and $\sigma_n$ are globally Lipschitz, by Proposition \ref{Propo well posed SE 01}, there exists a unique solution $Y^n\in C([0,T],C_{tem})$ to (\ref{ske1}).

\vspace{0.2cm}
Before proving the pre-compactness of $\{Y^n,n\in\mathbb{N}\}$ in $C([0,T],C_{tem})$,
we first present a compactness criterion of a subset of $C([0,T],C_{tem})$, which is a modification of Lemma 6.3 in \cite{Shiga}.
\begin{proposition}
\label{criteria}
	Given a sequence $\{\phi_n\}_{n\ge 1}$ in $C([0,T],C_{tem})$, if it satisfies
\begin{itemize}
	\item [(i)] For any $\lambda>0$, $\sup_{n\ge 1}\sup_{t\leq T, x\in\mathbb{R}}\left(|\phi_n(t,x)|e^{-\lambda |x|}\right)<\infty.$
	\item [(ii)] For any $k\in\mathbb{N}$, there exist constants $a_1,a_2,a_3>0$ dependent on $k$ such that
	\begin{align*}
		\sup_{n\geq1}|\phi_n(t,x)-\phi_n(s,y)| \leq a_3 \left(|t-s|^{a_1}+|x-y|^{a_2}\right),
	\end{align*} for any $t,s\in[0,T]$ and any $x,y\in J_k:=[-k,k]$.
\end{itemize}
Then $\{\phi_n\}_{n\ge 1}$ is pre-compact in $C([0,T],C_{tem})$.
\end{proposition}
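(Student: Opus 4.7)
The plan is to combine a diagonal Arzelà–Ascoli extraction on the compact strips $[0,T]\times J_k$ with a truncation argument that uses the weight $e^{-\lambda|x|}$ to absorb the tails. Concretely, fix $k\in\mathbb{N}$ and note that (i) (applied with, say, $\lambda = 1$) yields $\sup_{n}\sup_{t\leq T,x\in J_k}|\phi_n(t,x)|\leq e^{k}\sup_n\sup_{t,x}|\phi_n(t,x)|e^{-|x|}<\infty$, so $\{\phi_n|_{[0,T]\times J_k}\}$ is uniformly bounded, and (ii) gives joint equicontinuity on $[0,T]\times J_k$. Classical Arzelà–Ascoli then provides relative compactness in $C([0,T]\times J_k,\mathbb{R})$. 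A diagonal extraction over $k=1,2,\dots$ delivers a subsequence $\{\phi_{n_j}\}$ and a continuous function $\phi:[0,T]\times\mathbb{R}\to\mathbb{R}$ such that $\phi_{n_j}\to\phi$ uniformly on each $[0,T]\times J_k$.

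Next I would verify that $\phi$ actually lies in $C([0,T],C_{tem})$. Passing (i) to the pointwise limit shows $\sup_{t,x}|\phi(t,x)|e^{-\lambda|x|}<\infty$ for every $\lambda>0$, hence $\phi(t,\cdot)\in C_{tem}$. For continuity of $t\mapsto\phi(t,\cdot)$ in the $C_{tem}$ metric, it suffices to check that $|\phi(t,\cdot)-\phi(s,\cdot)|_{(-\lambda)}\to 0$ as $s\to t$ for each $\lambda>0$. I split $\mathbb{R}=J_k\cup J_k^c$: on $J_k$, the spatial–temporal modulus from (ii) is inherited in the limit, controlling $|\phi(t,x)-\phi(s,x)|$ by $a_3|t-s|^{a_1}$; on $J_k^c$, I apply (i) with exponent $\lambda/2$ to bound $|\phi(t,x)-\phi(s,x)|e^{-\lambda|x|}$ by $2M e^{-\lambda k/2}$, uniformly in $s,t$. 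Choosing $k$ first and then $|t-s|$ small finishes this point.

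Finally, the upgrade from locally uniform convergence to $\tilde d$-convergence: I need $\sup_{t\leq T,x\in\mathbb{R}}|\phi_{n_j}(t,x)-\phi(t,x)|e^{-\lambda|x|}\to 0$ for each $\lambda>0$. Given $\varepsilon>0$, I use (i) with exponent $\lambda/2$ (valid for both $\phi_{n_j}$ and its limit $\phi$) to get $|\phi_{n_j}(t,x)-\phi(t,x)|e^{-\lambda|x|}\leq 2M e^{(\lambda/2-\lambda)|x|}=2M e^{-\lambda|x|/2}$, which is at most $2Me^{-\lambda k/2}<\varepsilon/2$ once $k$ is large enough; then the uniform convergence on $[0,T]\times J_k$ from Step~2 handles the part $|x|\leq k$ for $j$ sufficiently large. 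Since every subsequence of $\{\phi_n\}$ admits a sub-subsequence converging in $C([0,T],C_{tem})$, the sequence is pre-compact. The only delicate point, and what I would flag as the main obstacle, is in the last step: one must invoke (i) with a strictly smaller exponent $\lambda/2$ rather than $\lambda$ itself, so that the residual factor $e^{(\lambda/2-\lambda)|x|}$ gives genuine decay at infinity, allowing the truncation error to be made arbitrarily small uniformly in $n$.
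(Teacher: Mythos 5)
Your proposal is correct and follows essentially the same route as the paper: Arzelà--Ascoli plus a diagonal extraction on the strips $[0,T]\times J_k$, followed by the tail estimate that applies hypothesis (i) with the halved exponent $\lambda/2$ so that the factor $e^{-\lambda|x|/2}$ makes the contribution on $J_k^c$ uniformly small. The only cosmetic difference is that the paper establishes $\tilde d$-convergence by showing the extracted subsequence is Cauchy in $C([0,T],C_{tem})$ and invoking completeness, whereas you compare directly with the limit $\phi$; both are fine.
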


\begin{proof}
	According to the Arzel\`{a}-Ascoli theorem,
by the diagonal argument, there exist a subsequence $\{\tilde{\phi}_n\}\subset \{\phi_n\}_{n\ge 1}$ and $\phi\in C([0,T]\times\mathbb{R},\mathbb{R})$ such that,
 for every $k\in\mathbb{N}$,
	\begin{align}\label{221105.1345}
		\sup_{t\leq T, x\in J_k} |\tilde{\phi}_{n}(t,x)-\phi(t,x)|\to0,\text{ as } n\to \infty.
	\end{align}
	Now we prove that $\{\tilde{\phi}_{n}\}$ converges to $\phi$ in $C([0,T],C_{tem})$. For any $\lambda>0$,
	\begin{align*}
		&\sup_{t\leq T, x\in\mathbb{R}} \left(|\tilde{\phi}_{n}(t,x)-\tilde{\phi}_{m}(t,x)|e^{-\lambda |x|}\right)\\
		\leq &\sup_{t\leq T,x\in J_k} \left(|\tilde{\phi}_{n}(t,x)-\tilde{\phi}_{m}(t,x)|e^{-\lambda |x|}\right)+\sup_{t\leq T,x\in J_k^c} \left(|\tilde{\phi}_{n}(t,x)-\tilde{\phi}_{m}(t,x)|e^{-\lambda |x|}\right)\\
		\leq &\sup_{t\leq T,x\in J_k} \left(|\tilde{\phi}_{n}(t,x)-\tilde{\phi}_{m}(t,x)|e^{-\lambda |x|}\right)+\sup_{t\leq T,x\in J_k^c} \left(|\tilde{\phi}_{n}(t,x)|e^{-\frac{\lambda}{2} |x|}+|\tilde{\phi}_{m}(t,x)|e^{-\frac{\lambda}{2} |x|} \right)e^{-\frac{\lambda}{2} |x|}\\
	 	\leq &\sup_{t\leq T,x\in J_k} \left(|\tilde{\phi}_{n}(t,x)-\tilde{\phi}_{m}(t,x)|e^{-\lambda |x|}\right)+C\sup_{|x|\ge k} e^{-\frac{\lambda}{2} |x|}.
	\end{align*}
First letting $m,n\to\infty$ and then letting $k\to\infty$ yield that
\begin{align*}
\sup_{t\leq T, x\in\mathbb{R}} \left(|\tilde{\phi}_{n}(t,x)-\tilde{\phi}_{m}(t,x)|e^{-\lambda |x|}\right)\to 0.
\end{align*}
It follows that $\{\tilde \phi_{n}\}$ is a Cauchy sequence in $C([0,T],C_{tem})$, and its limit is denoted by $\tilde \phi$. In view of (\ref{221105.1345}), $\tilde \phi=\phi$, completing the proof of this proposition.
\end{proof}

\vspace{0.8cm}
We will establish some uniform estimates of the solutions $\{Y^{n}\}_{n\ge 1}$ to the equation (\ref{ske1}). To do this, we first introduce some notations which will be used frequently in the following. For $\lambda,\ \kappa>0$, set
\begin{align}
	\beta(\kappa,\lambda)&:=\max\left\{\frac{\lambda^2}{2},4\kappa\right\},\label{eq beta}\\
	T^*(\kappa,\lambda)&:=\frac{1}{2\beta(\kappa,\lambda)}\left[1+\log\left(\frac{4\beta(\kappa,\lambda)}{\lambda^2}\log\frac{\beta(\kappa,\lambda)}{2\kappa}\right)\right].\label{T}
\end{align}
The definition of $\beta$ implies that \eqref{T} is well-defined, and it is not difficult to prove that
\begin{eqnarray}\label{eq T}
\text{for any }\kappa>0,\ \lim_{\lambda\to 0}T^*(\kappa,\lambda)=\infty.
\end{eqnarray}

\begin{lemma}\label{u_nbdd}
	Assume $u_{0} \in C_{tem}$ and \textbf{Hypothesis 2} holds.  Then for any $\lambda,T>0$, we have
	\begin{align}
		\sup_{n \geq 1} \sup_{t \leq T, x \in \mathbb{R}}\left(\left|Y^{n}(t, x)\right| e^{-\lambda|x|e^{\beta t}}\right)\leq C_{c_1,\lambda,u_0,K_\sigma,L_b,|h|_{\mathcal{H}},T},
	\end{align}
where $c_1$ is the constant appearing in {\rm(\textbf{H1(a)})}  and we write $\beta\left(c_{1},\lambda\right)$ as $\beta$ for simplicity.

Since the constant $C_{c_1,\lambda,u_0,K_\sigma,L_b,|h|_{\mathcal{H}},T}$ is increasing w.r.t. $|h|_{\mathcal{H}}$(not dependent on $h$),
 for any $\lambda,T>0$ and $N>0$,
	\begin{align}
		\sup_{h\in\mathcal{H}_N}\sup_{n \geq 1} \sup_{t \leq T, x \in \mathbb{R}}\left(\left|Y^{n}(t, x)\right| e^{-\lambda|x|e^{\beta t}}\right)\leq C_{c_1,\lambda,u_0,K_\sigma,L_b,N,T}.
	\end{align}

\end{lemma}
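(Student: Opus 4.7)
The plan is to bound the weighted quantity
\[
M_n(t):=\sup_{s\leq t,\,x\in\mathbb{R}}\bigl(|Y^n(s,x)|e^{-\lambda|x|e^{\beta s}}\bigr)
\]
uniformly in $n$ by substituting the a priori bound $|Y^n(s,y)|\leq M_n(s)e^{\lambda|y|e^{\beta s}}$ back into the mild formulation \eqref{ske1} and then closing via a nonlinear Gronwall-type inequality. Multiplying \eqref{ske1} by $e^{-\lambda|x|e^{\beta t}}$ splits into three pieces. For the heat-semigroup piece: since $u_0\in C_{tem}$, for any $\mu\in(0,\lambda)$ we have $|u_0(y)|\leq C_\mu e^{\mu|y|}$, and the Gaussian exponential moment $\int_\mathbb{R} p_t(x,y)e^{a|y|}\,dy\leq 2 e^{a|x|+a^2 t/2}$ with $a=\mu$, combined with $e^{-\lambda|x|e^{\beta t}}\leq e^{-\mu|x|}$, yields a uniform constant $C_{u_0,\lambda,T}$. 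For the control piece: $|\sigma_n|\leq K_\sigma$ together with Cauchy--Schwarz against $h$ and the identity $\int_0^t\!\int_\mathbb{R} p_{t-s}^2(x,y)\,dy\,ds=\sqrt{t/\pi}$ gives the bound $K_\sigma|h|_\mathcal{H}\sqrt{T/\pi}$, uniform in $x$ and monotone non-decreasing in $|h|_\mathcal{H}$.

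The drift piece is the core of the argument. Using \eqref{bn2}, the inequality $\log_+(ab)\leq \log_+a+\log_+b$, and $|Y^n(s,y)|\leq M_n(s)e^{\lambda|y|e^{\beta s}}$, one obtains
\[
|b_n(Y^n(s,y))|\leq c_1 M_n(s)\bigl[\log_+M_n(s)+\lambda|y|e^{\beta s}\bigr]e^{\lambda|y|e^{\beta s}}+L_b\bigl(M_n(s)e^{\lambda|y|e^{\beta s}}+1\bigr),
\]
so the task reduces to estimating, uniformly in $x$, the two weighted kernels
\[
J_k(t,s,x):=e^{-\lambda|x|e^{\beta t}}\int_\mathbb{R} p_{t-s}(x,y)|y|^k e^{\lambda|y|e^{\beta s}}\,dy,\qquad k\in\{0,1\}.
\]
For $k=0$ the exponential moment identity with $a=\lambda e^{\beta s}$, combined with $e^{\beta s}\leq e^{\beta t}$ and the lower bound $\beta\geq\lambda^2/2$ built into \eqref{eq beta}, yields $J_0\leq C_{\lambda,\beta,T}$. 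For $k=1$ I would use $|y|\leq \epsilon^{-1}e^{\epsilon|y|}$ with a parameter $\epsilon=\epsilon(t,s)$ chosen so that the coefficient of $|x|$ in the resulting exponent is non-positive (e.g.\ $\epsilon=\lambda(e^{\beta t}-e^{\beta s})/2$), trading a potentially singular prefactor $\epsilon^{-1}$ for the required $x$-uniformity; the lower bound $\beta\geq 4c_1$ in \eqref{eq beta} is what ensures that, after multiplying by the $\lambda e^{\beta s}$ coming from the $|y|$-factor in the log term, the resulting $s$-kernel is integrable on $[0,t]$.

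Substituting these estimates back produces a nonlinear integral inequality of the form
\[
M_n(t)\leq C_1+C_2\int_0^t\bigl[M_n(s)\log_+M_n(s)+M_n(s)\bigr]\,ds,
\]
valid on $[0,T^*(c_1,\lambda)]$ with $T^*$ as in \eqref{T}, where all constants are uniform in $n$ and depend on $|h|_\mathcal{H}$ only through a monotone non-decreasing factor. A Bihari--LaSalle nonlinear Gronwall inequality tailored to the growth function $u\log_+u$ then yields a finite bound on $M_n(t)$ on this interval; for $T>T^*(c_1,\lambda)$ one iterates the estimate on successive subintervals of length $T^*$, with constants that grow in a controlled way. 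The uniformity over $h\in\mathcal{H}_N$ stated in the second display is then immediate, since $|h|_\mathcal{H}$ enters the constants only through the control piece in a monotone way, so we may simply replace $|h|_\mathcal{H}$ by $N$.

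The main obstacle is the kernel $J_1$: the factor $|y|$ produced by the log term destroys the naive Gaussian absorption and forces the particular form of the designed norm, in particular the lower bound $\beta\geq 4c_1$ in \eqref{eq beta} and the threshold $T^*(c_1,\lambda)$ in \eqref{T}. Once this singular-kernel estimate is in hand, the nonlinear Gronwall step is classical.
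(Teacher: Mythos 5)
Your overall architecture (weighted sup $M_n$, three-way splitting of the mild form, reduction to the kernels $J_0,J_1$, log-Gronwall) matches the paper's, and you correctly identify $J_1$ as the crux. But your proposed treatment of $J_1$ does not close. With $|y|\leq \epsilon^{-1}e^{\epsilon|y|}$ and the constraint $\lambda e^{\beta s}+\epsilon\leq \lambda e^{\beta t}$ (needed for the $x$-exponent to be non-positive), the best admissible choice is $\epsilon\asymp\lambda(e^{\beta t}-e^{\beta s})$, and since $e^{\beta t}-e^{\beta s}\leq \beta(t-s)e^{\beta t}$ the prefactor $\lambda e^{\beta s}/\epsilon$ is of order $1/(\beta(t-s))$. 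The resulting $s$-kernel behaves like $(t-s)^{-1}$, which is \emph{not} integrable on $[0,t]$, so your claimed closed inequality $M_n(t)\leq C_1+C_2\int_0^t[M_n\log_+M_n+M_n]\,ds$ is not obtained; no choice of $\epsilon(t,s)$ repairs this, and the lower bound $\beta\geq 4c_1$ plays no role in kernel integrability. The paper's resolution is structurally different: it does \emph{not} eliminate the $|x|$-dependence pointwise in $s$. By (\ref{3.22}) one keeps the term $e^{\lambda|x|e^{\beta s}}\lambda|x|e^{\beta s}$ and exploits the exact identity $\lambda\beta|x|e^{\beta s}e^{\lambda|x|e^{\beta s}}=\frac{d}{ds}e^{\lambda|x|e^{\beta s}}$, so that integrating in $s$ produces $\frac{1}{\beta}(e^{\lambda|x|e^{\beta t}}-e^{\lambda|x|})\leq\frac{1}{\beta}e^{\lambda|x|e^{\beta t}}$ and hence a contribution $\frac{c_1}{\beta}e^{\frac{\lambda^2}{4\beta}e^{2\beta T-1}}f_n(T)$ proportional to the unknown itself. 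This term is then absorbed into the left-hand side using the equivalence (\ref{T*}), which is exactly where $T\leq T^*(c_1,\lambda)$ and $\beta\geq 4c_1$ enter.

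A secondary gap: for $T>T^*(c_1,\lambda)$ you propose iterating over subintervals of length $T^*$. This is delicate here because restarting at time $T^*$ the data $Y^n(T^*,\cdot)$ only lies in the weighted space with the inflated parameter $\lambda e^{\beta T^*}$, and $T^*(c_1,\cdot)$ \emph{decreases} in its second argument (indeed $T^*(c_1,\lambda)\to 0$ as $\lambda\to\infty$), so the successive admissible step lengths shrink and there is no guarantee their sum exceeds $T$. The paper avoids this entirely via (\ref{eq T}): since the weighted sup with a smaller $\lambda$ dominates the one with a larger $\lambda$, it suffices to prove the bound for $\lambda\in(0,\lambda_T]$ chosen so that $T\leq T^*(c_1,\lambda)$, i.e., one shrinks $\lambda$ rather than iterating in time.
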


\begin{proof}
	 For any $s\geq0$, let
	\begin{align}
		f_n(s):=\sup_{t\le s,x\in\mathbb{R}}\left(|Y^{n}(t,x)|e^{-\lambda |x|e^{\beta t}}\right).
	\end{align}
	Fix $T>0$. By equation (\ref{ske1}) we have
	\begin{align}\label{fn}
			f_{n}(T) \leq & \sup_{t \leq T, x \in \mathbb{R}}\left\{\left|P_{t} u_{0}(x)\right| e^{-\lambda |x|e^{\beta t}}\right\} \nonumber\\
			&+\sup_{t \leq T, x \in \mathbb{R}}\left\{\left|\int_{0}^{t} \int_{\mathbb{R}} p_{t-s}(x, y) b_{n}\left(Y^{n}(s, y)\right) \mathrm{d}s\mathrm{d} y\right| e^{-\lambda |x| e^{\beta t}}\right\} \nonumber\\
			&+\sup _{t \leq T, x \in \mathbb{R}}\left\{\left|\int_{0}^{t} \int_{\mathbb{R}} p_{t-s}(x, y) \sigma_{n}\left(Y^{n}(s, y)\right) h(s,y)\mathrm{d} s\mathrm{d} y\right| e^{-\lambda |x| e^{\beta t}}\right\}.
	\end{align}
	For the first term on the right-hand side of the above inequality, we deduce that
	\begin{align}\label{fn1}
		&\sup_{t \leq T, x \in \mathbb{R}}\left\{\left|P_{t} u_{0}(x)\right| e^{-\lambda |x| e^{\beta t}}\right\} \nonumber\\
		\leq & \sup _{t \leq T, x \in \mathbb{R}}\left\{\left|\int_{\mathbb{R}} p_{t}(x, y) u_{0}(y) \mathrm{d} y\right| e^{-\lambda |x|}\right\} \nonumber\\
		\leq & \sup_{y \in \mathbb{R}}\left(\left|u_{0}(y)\right| e^{-\lambda |y|}\right)  \sup _{t \leq T, x \in \mathbb{R}}\left\{\int_{\mathbb{R}} p_{t}(x, y) e^{\lambda |y|} \mathrm{d} y  e^{-\lambda |x|}\right\} \nonumber\\
		\leq & 2 e^{\frac{\lambda^{2} T}{2}} \left|u_{0}\right|_{(-\lambda)},
	\end{align}
	where in the last step we have used (\ref{3.20}). Applying H\"older's inequality, \eqref{sigman2} and (\ref{3.21})
	to the third term on the right-hand side of (\ref{fn}), we find
	\begin{align}
		\label{fn3}
			&\sup _{t \leq T, x \in \mathbb{R}}\left\{\left|\int_{0}^{t} \int_{\mathbb{R}} p_{t-s}(x, y) \sigma_{n}\left(Y^{n}(s, y)\right) h(s,y)\mathrm{d} s\mathrm{d} y\right| e^{-\lambda |x| e^{\beta t}}\right\} \nonumber\\
			\leq& K_\sigma \left|h\right|_{\mathcal{H}} \sup _{t \leq T, x \in \mathbb{R}}\left(\int_{0}^{t} \int_{\mathbb{R}} p_{t-s}^2(x,y)\mathrm{d}s\mathrm{d}y \right)^{\frac{1}{2}} \nonumber\\
			\leq &  K_\sigma \left|h\right|_{\mathcal{H}} \sup _{t \leq T} \left(\int_{0}^{t} \frac{1}{\sqrt{\pi(t-s)}}\mathrm{d}s\right)^{\frac{1}{2}}\nonumber\\
			\leq& \sqrt{2}\pi^{-\frac{1}{4}}K_\sigma \left|h\right|_{\mathcal{H}} T^\frac{1}{4}.
	\end{align}
Let us estimate the second term on the right-hand side of (\ref{fn}). By (\ref{bn2}) and $\log_+(ab)\leq \log_+a+\log_+b$ for any $a,b\ge 0$,
	\begin{align}\label{fn2}
	&\sup _{t \leq T, x \in \mathbb{R}}\left\{\left|\int_{0}^{t} \int_{\mathbb{R}} p_{t-s}(x, y) b_{n}\left(Y^{n}(s, y)\right) \mathrm{d}s \mathrm{d} y\right| e^{-\lambda |x| e^{\beta t}}\right\} \nonumber\\
	\leq & c_{1}\sup _{t \leq T, x \in \mathbb{R}}\left\{\int_{0}^{ t } \int_{\mathbb{R} } p_{ t-s } (x , y) \left|Y^{n}(s, y)\right| \log _{+}\left|Y^{n}(s,y)\right|\mathrm{d}s\mathrm{d}ye^{-\lambda |x| e^{\beta t}}\right\} \nonumber\\
	&+L_{b}\sup _{t \leq T, x \in \mathbb{R}}\left\{ \int_{0}^{ t } \int_{\mathbb{R} } p_{t-s} (x,y)  \left|Y^{n}(s, y)\right|\mathrm{d} s \mathrm{d} ye^{-\lambda |x| e^{\beta t}}\right\} +L_{b} T \nonumber\\
	\leq &L_bT+L_b \sup _{t \leq T, x \in \mathbb{R}} \left\{ \int_{0}^{t}\int_{\mathbb{R} } p_{t-s}(x,y)e^{\lambda |y| e^{\beta s}}\left|Y^{n}(s, y)\right|e^{-\lambda |y| e^{\beta s}}\mathrm{d}s\mathrm{d} ye^{-\lambda |x| e^{\beta t}}\right\}\nonumber\\
	&+c_{1} \sup _{t\leq T, x\in \mathbb{R}}\left\{\int_{0}^{t} \int_{\mathbb{R}} p_{t-s}(x,y)e^{\lambda |y| e^{\beta s}}\left|Y^{n}(s, y)\right| e^{-\lambda |y| e^{\beta s}}\right. \nonumber\\
	&\qquad\left.\times \log_{+}\left(|Y^{n}(s,y)|e^{-\lambda |y| e^{\beta s}}e^{\lambda |y| e^{\beta s}} \right) \mathrm{d}s\mathrm{d}y e^{-\lambda|x| e^{\beta t}}\right\} \nonumber\\		
	\leq &L_bT+L_b \sup _{t\leq T,x\in\mathbb{R}}\left\{ \int_{0}^{t} \sup_{y\in \mathbb{R}} \left(\left|Y^{n}(s, y)\right|e^{-\lambda |y| e^{\beta s}}\right) \int_{\mathbb{R} } p_{t-s}(x,y)e^{\lambda |y| e^{\beta s}}\mathrm{d} y \mathrm{d} se^{-\lambda |x| e^{\beta t}}\right\} \nonumber\\
	&+c_{1} \sup _{t \leq T, x \in \mathbb{R}}\left\{\int_{0}^{t} \sup _{y \in \mathbb{R}}\left[\left(|Y^{n}(s, y)| e^{-\lambda |y| e^{\beta s}}\right) \log_{+}\left(|Y^{n}(s, y)| e^{-\lambda |y| e^{\beta s}}\right)\right]\right. \nonumber\\
	&\qquad \left.\times \int_{\mathbb{R}} p_{t-s}(x,y) e^{\lambda |y| e^{\beta s}} \mathrm{d}y \mathrm{d} s \,e^{-\lambda|x| e^{\beta t}}\right\} \nonumber\\
	&+c_1\sup_{t \leq T, x \in \mathbb{R}} \left\{ \int_{0}^{t} \sup _{y \in \mathbb{R}}\left(|Y^{n}(s, y)| e^{-\lambda |y| e^{\beta s}}\right) \int_{\mathbb{R}}  p_{t-s}(x,y) e^{\lambda |y| e^{\beta s}} \lambda |y| e^{\beta s}\mathrm{d}y \mathrm{d}s \,e^{-\lambda |x| e^{\beta t}}\right\} \nonumber \\
	=:&L_bT+\uppercase\expandafter{\romannumeral1}+\uppercase\expandafter{\romannumeral2}+\uppercase\expandafter{\romannumeral3}.
\end{align}
In the following we estimate $\uppercase\expandafter{\romannumeral1},\ \uppercase\expandafter{\romannumeral2}$ and $ \uppercase\expandafter{\romannumeral3}$ separately. By (\ref{3.20}), for $0\leq s\leq t$,
\begin{align}\label{4.37}
	\int_{\mathbb{R}} p_{t-s}(x,y)e^{\lambda |y|e^{\beta s}}\mathrm{d}y
	\leq 2e^{\frac{\lambda ^{2}(t-s) e^{2 \beta s}}{2}}e^{\lambda |x|e^{\beta s}}
	\leq 2 e^{\frac{\lambda^2}{4\beta}e^{2\beta t-1}} e^{\lambda |x|e^{\beta t}}.
\end{align}
It follows by the above inequality that
	\begin{align}\label{fn21}
			\uppercase\expandafter{\romannumeral1}\leq 2 L_b e^{\frac{\lambda^{2}}{4 \beta}e^{2\beta T-1}} \sup_{t \leq T}\left\{\int_{0}^{t} \sup_{y \in \mathbb{R}} \left(\left|Y^{n}(s,y)\right|e^{-\lambda |y|e^{\beta s}}\right)\mathrm{d}s \right\}
			\leq 2 L_b e^{\frac{\lambda ^{2}}{4 \beta} e^{2 \beta T-1}} \int_{0}^{T} f_n(s)\mathrm{d}s.
	\end{align}
	Notice that the function $x\mapsto x\log_{+}x$ is increasing on $[0,\infty)$, by \eqref{4.37} again,
	\begin{align}\label{fn22}
			\uppercase\expandafter{\romannumeral2}\leq & 2 c_{1} e^{\frac{\lambda ^{2}}{4 \beta} e^{2 \beta T-1}} \int_{0}^{T} f_n(s) \log_{+} f_n(s) \mathrm{d} s.
	\end{align}
	By (\ref{3.22}), for $0\leq s\leq t$,
	\begin{align}\label{4.3}
		\int_{\mathbb{R}} p_{t-s}(x,y)e^{\lambda |y|e^{\beta s}}\lambda |y|e^{\beta s}\mathrm{d}y
		\leq &e^{\frac{\lambda ^{2}(t-s) e^{2 \beta s}}{2}} e^{\lambda |x| e^{\beta s}} \lambda |x| e^{\beta s}+C_{\lambda , \beta, t} e^{\lambda |x| e^{\beta s}},
	\end{align}
where the constant $C_{\lambda ,\beta,t}$ is increasing with respect to $t$. Using the above inequality, we deduce that
	\begin{align*}
			\uppercase\expandafter{\romannumeral3}\le & c_{1} \sup _{t \leq T, x \in \mathbb{R}}\left\{\int_{0}^{t} \sup _{y \in \mathbb{R}}\left(|Y^{n}(s, y)| e^{-\lambda |y| e^{\beta s}}\right)\right. \nonumber \\
			&\qquad\left.\times\left(e^{\frac{\lambda ^{2}(t-s) e^{2 \beta s}}{2}} e^{\lambda |x| e^{\beta s}} \lambda |x| e^{\beta s}+C_{\lambda , \beta, t} e^{\lambda |x| e^{\beta s}}\right) \mathrm{d} s \,e^{-\lambda |x| e^{\beta t}}\right\} \nonumber\\
			\leq & c_{1} \sup_{t \leq T, x \in \mathbb{R}}\left\{\sup_{s \leq t, y \in \mathbb{R}}\left(|Y^{n}(s, y)| e^{-\lambda |y| e^{\beta s}}\right) \frac{1}{\beta} e^{\frac{\lambda ^{2}}{4 \beta} e^{2 \beta t-1}} \int_{0}^{t} \left(e^{\lambda |x| e^{\beta s}}\right)' \mathrm{d} s \,e^{-\lambda |x| e^{\beta t}}\right\} \nonumber \\
			&+c_{1} \sup_{t \leq T}\left\{C_{\lambda ,\beta,t} \int_{0}^{t} \sup_{r \leq s, y \in \mathbb{R}}\left(|Y^{n}(r, y)| e^{-\lambda |y| e^{\beta r}}\right) \mathrm{d} s\right\} \nonumber\\
			\le& \frac{c_{1}}{\beta} e^{\frac{\lambda ^{2}}{4 \beta} e^{2 \beta T-1}} f_n(T)+C_{c_1, \lambda ,\beta,T} \int_{0}^{T} f_n(s) \mathrm{d}s.
	\end{align*}
Combining the above estimate with the following two facts that

(F1) (\ref{eq T}) implies that there exists a positive constant $\lambda_{T}$ such that $T \leq T^{*}\left( c_{1},\lambda\right)$ for all $\lambda \in(0, \lambda_{T}]$,

(F2)
	\begin{align}\label{T*}
		\frac{c_{1}}{\beta} e^{\frac{\lambda ^{2}}{4 \beta} e^{2 \beta T-1}} \leq \frac{1}{2} \Longleftrightarrow T \leq T^{*}\left( c_{1},\lambda \right)=\frac{1}{2 \beta}\left[1+\log \left(\frac{4 \beta}{\lambda^{2}} \log \frac{\beta}{2 c_{1}}\right)\right],
	\end{align}
we see that, for any $\lambda \in(0, \lambda_{T}]$,
	\begin{align}\label{fn23}
		\uppercase\expandafter{\romannumeral3}\leq \frac{1}{2} f_n(T)+C_{c_1,\lambda ,\beta,T} \int_{0}^{T} f_n(s)\mathrm{d}s.
	\end{align}
Substituting (\ref{fn1}), (\ref{fn3}), (\ref{fn2}), (\ref{fn21}), (\ref{fn22}),  (\ref{fn23}) into (\ref{fn}) leads to, for any $\lambda \in(0, \lambda_{T}]$,
	\begin{align*}
			f_n(T)\le& 4 e^{\frac{\lambda ^{2} T}{2}} \left|u_{0}\right|_{(-\lambda )} +2\sqrt{2}\pi^{-\frac{1}{4}} K_\sigma \left|h\right|_{\mathcal{H}} T^\frac{1}{4}+2L_bT \nonumber\\
			&+\left(4 L_b e^{\frac{\lambda ^{2}}{4 \beta} e^{2 \beta T-1}}+2C_{c_{1},\lambda ,\beta,T}\right) \int_{0}^{T} f_n(s)\mathrm{d}s+4 c_{1} e^{\frac{\lambda ^{2}}{4 \beta} e^{2 \beta T-1}} \int_{0}^{T} f_n(s) \log _{+} f_n(s) \mathrm{d}s,
	\end{align*}
	that is,
	\begin{align*}
		f_n(T)\le C_{\lambda ,u_0,K_\sigma,L_b,|h|_{\mathcal{H}},T}+
		C_{c_1,\lambda ,\beta,L_b,T}\int_{0}^{T} f_n(s)\mathrm{d}s+C_{c_1,\lambda ,\beta,T}\int_{0}^{T} f_n(s) \log _{+} f_n(s) \mathrm{d}s.
	\end{align*}
	It follows from Lemma \ref{Gronwall1} and the above inequality that,  for any $\lambda \in(0, \lambda_{T}]$,
	\begin{align}\label{u_nbddbeta}
		\sup_{t\le  T,x\in\mathbb{R}}\left(|Y^{n}(t,x)|e^{-\lambda |x|e^{\beta t}}\right)\leq C_{c_1,\lambda ,\beta,u_0,K_\sigma,L_b,|h|_{\mathcal{H}},T},\quad \forall\  n\ge 1.
	\end{align}
Since the constant $C_{c_1,\lambda ,\beta,u_0,K_\sigma,L_b,|h|_{\mathcal{H}},T}$ is independent on $n$, we may rewrite (\ref{u_nbddbeta}) as,  for any $\lambda \in(0, \lambda_{T}]$,
\begin{align*}
	\sup_{n\ge 1}\sup_{t\le T,x\in\mathbb{R}}\left(|Y^{n}(t,x)|e^{-\lambda|x|e^{\beta t}}\right)\leq C_{c_1,\lambda,u_0,K_\sigma,L_b,|h|_{\mathcal{H}},T}.
	\end{align*}
Combining the fact that it suffices to prove this lemma for sufficiently small $\lambda$, the proof of this lemma is complete.

\end{proof}

\begin{lemma}\label{UnVn}
	For each $n\ge 1$, define
	\begin{align}
		U_{n}(t, x):&=\int_{0}^{t} \int_{\mathbb{R}} p_{t-r}(x, z) b_{n}\left(Y^{n}(r, z)\right) \mathrm{d} r \mathrm{d} z,\\
		V_{n}(t, x):&=\int_{0}^{t} \int_{\mathbb{R}} p_{t-r}(x, z) \sigma_{n}\left(Y^{n}(r, z)\right)h(r,y) \mathrm{d}r \mathrm{d}z,
	\end{align}
	where $h\in\mathcal{H}$. Assume that $u_{0} \in C_{tem}$and \textbf{Hypothesis 2} holds. Then for any $\lambda, T>0$, $\theta \in(0,1)$ and $k\in\mathbb{N}$, there exist constants $C_{c_{1},\lambda,u_{0},K_{\sigma},L_{b},|h|_{\mathcal{H}},T,k,\theta}$ and $C_{K_{\sigma}, |h|_{\mathcal{H}}}$ such that, for any $s, t \in[0, T]$ and $x, y \in [-k,k]$,
	\begin{align}
\sup_{n\in\mathbb{N}}\left|U_{n}(t, x)-U_{n}(s, y)\right| &\leq C_{c_{1},\lambda,u_{0},K_{\sigma},L_{b},|h|_{\mathcal{H}},T,k, \theta}\left(|t-s|^{\theta }+|x-y|\right), \label{U_n}\\
\sup_{n\in\mathbb{N}}\left|V_{n}(t, x)-V_{n}(s, y)\right| &\leq C_{K_{\sigma}, |h|_{\mathcal{H}}}\left(|t-s|^{\frac{1}{4}}+|x-y|^{\frac{1}{2}}\right). \label{V_n}
	\end{align}

Since the constants appearing in the above two inequalities are  increasing w.r.t. $|h|_{\mathcal{H}}$(not dependent on $h$),
 for any $\lambda,T>0$, $\theta \in(0,1)$, $k\in\mathbb{N}$, $N>0$, $s, t \in[0, T]$ and $x, y \in [-k,k]$,
 	\begin{align}
\sup_{h\in\mathcal{H}_N}\sup_{n\in\mathbb{N}}\left|U_{n}(t, x)-U_{n}(s, y)\right| &\leq C_{c_{1},\lambda,u_{0},K_{\sigma},L_{b},N,T,k, \theta}\left(|t-s|^{\theta }+|x-y|\right), \label{U_n-01}\\
\sup_{h\in\mathcal{H}_N}\sup_{n\in\mathbb{N}}\left|V_{n}(t, x)-V_{n}(s, y)\right| &\leq C_{K_{\sigma}, N}\left(|t-s|^{\frac{1}{4}}+|x-y|^{\frac{1}{2}}\right). \label{V_n-01}
	\end{align}
\end{lemma}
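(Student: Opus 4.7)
My plan is to reduce both bounds to classical heat-kernel estimates, with Lemma~\ref{u_nbdd} controlling the nonlinearity and a time/space split isolating the two kernel-difference contributions. First, applying Lemma~\ref{u_nbdd} with a sufficiently small $\lambda$, combining it with the growth bound (\ref{bn2}) and the elementary fact that $u\log_+ u \le C_\epsilon e^{\epsilon u}$, yields a uniform exponential envelope
\begin{align*}
\sup_{n\ge 1}\sup_{r\le T}|b_n(Y^n(r,z))| \le C_\mu\, e^{\mu|z|}, \qquad z\in\mathbb{R},
\end{align*}
for arbitrarily small $\mu>0$, while (\ref{sigman2}) already gives $|\sigma_n(Y^n)| \le K_\sigma$. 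All subsequent constants depend on $\mu$ but one is free to shrink it.

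For $U_n$, write $U_n(t,x)-U_n(s,y) = [U_n(t,x)-U_n(t,y)] + [U_n(t,y)-U_n(s,y)]$. The spatial increment would be handled by $|p_u(x,z)-p_u(y,z)|\le |x-y|\sup_\xi|\partial_\xi p_u(\xi,z)|$ together with the direct Gaussian computation $\int|\partial_\xi p_u(\xi,z)|\,e^{\mu|z|}\,dz = O(u^{-1/2}e^{\mu|\xi|+\mu^2 u/2})$; since $(t-r)^{-1/2}$ is integrable on $[0,t]$, this produces the Lipschitz bound in $|x-y|$. For the temporal increment I would split further at $r=s$: the piece on $[s,t]$ is $O(t-s)$ directly from $\int p_{t-r}(y,z)\,e^{\mu|z|}\,dz \le Ce^{\mu|y|+\mu^2(t-r)/2}$, while on $[0,s]$ the identity $\partial_u p_u = \tfrac12\Delta p_u$ together with the same Gaussian moment computations gives
\begin{align*}
\int|p_{t-r}(y,z)-p_{s-r}(y,z)|\,e^{\mu|z|}\,dz \;\le\; \int_{s-r}^{t-r}\frac{C_\mu}{u}\,e^{\mu|y|+\mu^2 u/2}\,du \;\le\; C_{\mu,T,k}\,\log\!\Big(1+\tfrac{t-s}{s-r}\Big).
\end{align*}
The elementary inequality $\log(1+x)\le C_\theta x^\theta$ valid for all $x\ge 0$ and $\theta\in(0,1)$ converts this into $C_\theta(t-s)^\theta(s-r)^{-\theta}$, which is $r$-integrable on $[0,s]$ for any $\theta<1$, delivering the desired bound $C_{k,\mu,\theta,T}(t-s)^\theta$.

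For $V_n$, the uniform bound on $\sigma_n$ reduces everything to Cauchy--Schwarz:
\begin{align*}
|V_n(t,x)-V_n(s,y)| \le K_\sigma|h|_{\mathcal{H}}\left(\int_0^T\!\!\int_{\mathbb{R}}\!\big|p_{t-r}(x,z)\mathbf{1}_{r<t}-p_{s-r}(y,z)\mathbf{1}_{r<s}\big|^2\,dz\,dr\right)^{\!1/2}.
\end{align*}
The inner integral is bounded by $C(|t-s|^{1/2}+|x-y|)$ via the classical $L^2$ heat-kernel increment estimates that underlie the $1/4$-in-time and $1/2$-in-space regularity of the stochastic convolution, giving the stated exponents. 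All constants depend on $h$ only through $|h|_{\mathcal{H}}$, so the $\mathcal{H}_N$-uniform versions (\ref{U_n-01})--(\ref{V_n-01}) follow immediately by replacing $|h|_{\mathcal{H}}$ with $N$.

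The main technical point I expect to wrestle with is the temporal increment of $U_n$: the kernel difference admits a trivial $O(1)$ bound and a log-singular $O(\log(1+(t-s)/(s-r)))$ bound, and to obtain a Hölder exponent arbitrarily close to $1$ one must turn the logarithm into a small power of $(t-s)/(s-r)$ so that $(s-r)^{-\theta}$ remains $r$-integrable on $[0,s]$. Everything else is a combination of routine Gaussian moment computations and weighted heat-kernel estimates of the same type as those already used in the proof of Lemma~\ref{u_nbdd}.
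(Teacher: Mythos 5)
Your proposal is correct, and its overall architecture --- split the increment into temporal and spatial parts, control the integrand uniformly in $n$ via Lemma \ref{u_nbdd} together with (\ref{bn2}) and (\ref{sigman2}), then invoke weighted heat-kernel estimates --- is the same as the paper's. The one place where you take a genuinely different route is the temporal kernel increment on $[0,s]$: the paper applies the ready-made interpolation inequality (\ref{1oflem3.3}), $|p_{t-r}-p_{s-r}|\le C_\theta|t-s|^\theta(s-r)^{-\theta}(p_{s-r}+p_{t-r}+p_{2(t-r)})$, and then reuses the exponential-moment bounds (\ref{3.20}) and (\ref{3.22}), whereas you go through $\partial_u p_u=\tfrac12\Delta p_u$, obtain a logarithmic bound $C\log(1+\tfrac{t-s}{s-r})$, and convert it to $C_\theta(t-s)^\theta(s-r)^{-\theta}$ via $\log(1+x)\le C_\theta x^\theta$. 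Both yield every $\theta\in(0,1)$ and integrability of $(s-r)^{-\theta}$; your route is self-contained but requires you to actually carry out the Gaussian derivative computations that the paper has packaged into Lemma \ref{lem 6.3}. For $V_n$ your single Cauchy--Schwarz with the combined kernel difference is just a repackaging of the paper's three-term split using (\ref{3.21}) and (\ref{5oflem3.3}). One minor imprecision: the inequality ``$u\log_+u\le C_\epsilon e^{\epsilon u}$'' is not what you actually use (applied to $u\approx e^{\lambda'|z|}$ it would give a double exponential); the correct and intended step is $|Y^n|\le Ce^{\lambda'|z|}$ with $\lambda'=\lambda e^{\beta T}$ small, whence $|Y^n|\log_+|Y^n|\le Ce^{\lambda'|z|}(\log_+C+\lambda'|z|)\le C'e^{2\lambda'|z|}$, so the claimed envelope $|b_n(Y^n(r,z))|\le C_\mu e^{\mu|z|}$ does hold.
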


\begin{proof}
	It suffices to prove this lemma for sufficiently small $\lambda$. Fix $T>0$. Recall $\lambda_{T}$ appearing in the proof of Lemma \ref{u_nbdd}; see the fact (F1). In the following, we will prove that this lemma holds for any $\lambda \in(0, \lambda_{T}]$.


Fix $\lambda \in(0, \lambda_{T}]$. In the following we write $\beta\left(c_1,\lambda \right)$ as $\beta$ for simplicity. Without loss of generality, we assume $t \geq s$. Observe that
	\begin{align}\label{Un}
			&|U_{n}(t,x)-U_{n}(s, y)| \nonumber\\
			\leq &\left|\int_{0}^{s} \int_{\mathbb{R}}\left[p_{t-r}(x, z)-p_{s-r}(x, z)\right] b_{n}\left(Y^{n}(r, z)\right) \mathrm{d} r \mathrm{d} z\right| \nonumber\\
			&+\left|\int_{s}^{t} \int_{\mathbb{R}} p_{t-r}(x, z) b_{n}\left(Y^{n}(r,z)\right) \mathrm{d} r \mathrm{d} z\right| \nonumber \\
			&+\left|\int_{0}^{s} \int_{\mathbb{R}}\left[p_{s-r}(x, z)-p_{s-r}(y, z)\right] b_{n}\left(Y^{n}(r, z)\right) \mathrm{d} r \mathrm{d} z\right| \nonumber\\
			=:& I^n_{1}+I^n_{2}+I^n_{3}.
	\end{align}
	By (\ref{bn2}), (\ref{1oflem3.3}) and (\ref{3.20}), (\ref{3.22}) we have
	\begin{align*}
			I^n_{1} \leq & \int_{0}^{s} \int_{\mathbb{R}}\left|p_{t-r}(x, z)-p_{s-r}(x, z)\right| \left(c_{1}\left|Y^{n}(r, z)\right| \log _{+}\left|Y^{n}(r, z)\right|+L_{b}\left(\left|Y^{n}(r, z)\right|+1\right)\right) \mathrm{d} r \mathrm{d} z \nonumber\\
			\leq & \int_{0}^{s} \int_{\mathbb{R}} \frac{(2 \sqrt{2})^{\theta}|t-s|^{\theta}}{(s-r)^{\theta}}\left(p_{s-r}(x, z)+p_{t-r}(x, z)+p_{2(t-r)}(x, z)\right) \nonumber\\
			& \times\left\{c_{1} e^{\lambda|z| e^{\beta r}} \sup _{z \in \mathbb{R}}\left[\left|Y^{n}(r, z)\right| \,e^{-\lambda|z| e^{\beta r}} \log_+\left(\left|Y^{n}(r, z)\right| e^{-\lambda|z| e^{\beta r}}\right)\right]\right. \nonumber\\
			&+ c_{1} e^{\lambda|z| e^{\beta r}} \lambda|z| e^{\beta r} \sup_{z \in \mathbb{R}}\left(\left|Y^{n}(r, z)\right| e^{-\lambda|z| e^{\beta r}}\right) \nonumber\\
			&\left.+L_{b} e^{\lambda|z| e^{\beta r}}\sup _{z \in \mathbb{R}}\left(\left|Y^{n}(r, z)\right| e^{-\lambda|z| e^{\beta r}}\right)+L_{b}\right\} \mathrm{d} z \mathrm{d} r \nonumber\\
			\leq &(2 \sqrt{2})^{\theta}|t-s|^{\theta} \int_{0}^{s} \frac{d r}{(s-r)^{\theta}} \left\{ \sup_{r \leq T, z \in \mathbb{R}}\left(\left|Y^{n}(r, z)\right| e^{-\lambda|z| e^{\beta r}}\right) \log _{+}\left[\sup_{r \leq T, z \in \mathbb{R}}\left(\left|Y^{n}(r, z)\right| e^{-\lambda|z| e^{\beta r}}\right)\right] \right. \nonumber\\
			&\left.\times C_{c_{1},\lambda, \beta, T} \,e^{\lambda|x| e^{\beta r}} +\sup_{r \leq T, z \in \mathbb{R}}\left(\left|Y^{n}(r, z)\right| e^{-\lambda|z| e^{\beta r}}\right)\right. \nonumber\\
			& \left.\times C_{c_{1}, \lambda,\beta, L_{b}, T}\left(e^{\lambda|x| e^{\beta r}} \lambda|x| e^{\beta r}+e^{\lambda|x| e^{\beta r}}\right) +3 L_{b}\right\} \mathrm{d}r.
	\end{align*}
Hence it follows by
	\begin{align*}
		e^{\lambda|x| e^{\beta r}} \lambda|x| e^{\beta r}  \leq C_{\lambda,\beta,T,k},\quad \forall \  (r,x) \in[0, T]\times [-k,k]
	\end{align*}
that
	\begin{align}\label{Un1}
			I_{1}^n \leq &(2 \sqrt{2})^{\theta}|t-s|^{\theta} \int_{0}^{s} \frac{\mathrm{d}r}{(s-r)^{\theta}} \times C_{c_1,\lambda, \beta, L_{b}, T,k} \nonumber\\
			& \times \left\{\sup_{r \leq T, z \in \mathbb{R}}\left(\left|Y^{n}(r, z)\right| e^{-\lambda|z| e^{\beta r}}\right)\log _{+}\left[\sup _{r \leq T, z \in \mathbb{R}}\left(\left|Y^{n}(r, z)\right| e^{-\lambda|z| e^{\beta r}}\right)\right]\right.\nonumber\\
			&\left. +\sup_{r \leq T, z \in \mathbb{R}}\left(\left|Y^{n}(r, z)\right| e^{-\lambda|z| e^{\beta r}}\right) +1\right\}\nonumber\\
          \leq & C_{c_1,\lambda,\beta,u_0, K_\sigma, L_b, |h|_{\mathcal{H}},T,k,\theta}\,|t-s|^{\theta }.
	\end{align}
Here we have used  Lemma \ref{u_nbdd} to get the last inequality.

For $I_{2}^n$, a similar argument as proving (\ref{Un1}) shows that
	\begin{align}\label{Un2}
			I_{2}^n\le C_{c_1,\lambda,\beta,u_0, K_\sigma, L_b, |h|_{\mathcal{H}}, T,k}\,|t-s|.
	\end{align}
For $I^n_3$, by (\ref{bn2}) and (\ref{2oflem3.3})-(\ref{4oflem3.3}),
\begin{align*}
	I^n_{3} \leq & \int_{0}^{s} \int_{\mathbb{R}}\left|p_{s-r}(x, z)-p_{s-r}(y, z)\right| \left[c_{1}\left|Y^{n}(r, z)\right| \log _{+}\left|Y^{n}(r, z)\right|+L_{b}\left(\left|Y^{n}(r, z)\right|+1\right)\right] \mathrm{d} r \mathrm{d} z  \nonumber\\
	\leq & \int_{0}^{s} \int_{\mathbb{R}}\left|p_{s-r}(x, z)-p_{s-r}(y, z)\right| \nonumber\\
	& \times\left\{c_{1} e^{\lambda|z| e^{\beta r}} \lambda|z| e^{\beta r} \sup_{z \in \mathbb{R}}\left(\left|Y^{n}(r, z)\right| e^{-\lambda|z| e^{\beta r}}\right)\right. \nonumber\\
	&+c_{1} e^{\lambda|z| e^{\beta r}} \sup_{z \in \mathbb{R}}\left[\left|Y^{n}(r, z)\right| e^{-\lambda|z| e^{\beta r}} \log_{+}\left(\left|Y^{n}(r, z)\right| e^{-\lambda|z| e^{\beta r}}\right)\right]  \nonumber\\
	&\left.+L_{b} e^{\lambda|z| e^{\beta r}} \sup _{z \in \mathbb{R}}\left(\left|Y^{n}(r, z)\right| e^{-\lambda|z| e^{\beta r}}\right)+L_{b}\right\} \mathrm{d} z \mathrm{d} r \nonumber\\
	\leq &|x-y| \int_{0}^{s} \frac{1}{\sqrt{s-r}} \left\{\sup _{r \leq T, z \in \mathbb{R}}\left(\left|Y^{n}(r, z)\right| e^{-\lambda|z| e^{\beta r}}\right)\right. \nonumber\\
	& \times C_{c_{1},\lambda,\beta,T}\left[e^{\lambda(|x|+|x-y|) e^{\beta r}} \lambda(|x|+|x-y|) e^{\beta r}+e^{\lambda(|x|+|x-y|) e^{\beta r}}\right] \nonumber\\
	&+\sup_{r \leq T, z \in \mathbb{R}}\left(\left|Y^{n}(r, z)\right| e^{-\lambda|z| e^{\beta r}}\right) \log_{+}\left[\sup _{r \leq T, z \in \mathbb{R}}\left(\left|Y^{n}(r, z)\right| e^{-\lambda|z| e^{\beta r}}\right)\right] \nonumber\\
	&\left.\times C_{c_{1},\lambda,\beta, T}  \,e^{\lambda(|x|+|x-y|) e^{\beta r}}+\sqrt{\frac{2}{\pi}} L_{b}\right\} \mathrm{d} r .
\end{align*}
	Due to the fact that $|x-y| \leq 2k$, we have
	\begin{align*}
			e^{\lambda(|x|+|x-y|) e^{\beta r}} \lambda(|x|+|x-y|) e^{\beta r}
			\leq  C_{\lambda,\beta,T,k}, \quad \forall\  r \in[0, T], \  x,y\in [-k,k].
	\end{align*}
	Hence, applying Lemma \ref{u_nbdd} again, we see that
	\begin{align}\label{Un3}
		I^n_{3}\leq C_{c_{1}, \lambda,\beta, u_{0}, K_{\sigma}, L_{b}, |h|_{\mathcal{H}},T,k}\,|x-y|.
	\end{align}
	Combining \eqref{Un}-\eqref{Un3} together yields \eqref{U_n}.
	
	Now we consider $V_n$. Observe that
	\begin{align}\label{Vn}
			&\left|V_{n}(t, x)-V_{n}(s, y)\right| \nonumber\\
			\leq &\left|\int_{0}^{s} \int_{\mathbb{R}}\left[p_{t-r}(x, z)-p_{s-r}(x, z)\right] \sigma_{n}\left(Y^{n}(r, z)\right) h(r,z)\mathrm{d} r\mathrm{d} z\right| \nonumber\\
			&+\left|\int_{s}^{t} \int_{\mathbb{R}} p_{t-r}(x, z) \sigma_{n}\left(Y^{n}(r, z)\right) h(r,z)\mathrm{d} r\mathrm{d} z\right| \nonumber\\
			&+\left|\int_{0}^{s} \int_{\mathbb{R}}\left[p_{s-r}(x, z)-p_{s-r}(y, z)\right] \sigma_{n}\left(Y^{n}(r, z)\right) h(r,z)\mathrm{d} r\mathrm{d} z\right| \nonumber \\
			=& J^n_{1}+J^n_{2}+J^n_{3}.
	\end{align}
	Using H\"older's inequality and (\ref{5oflem3.3}) we get
	\begin{align}\label{Vn1}
			J_{1}^n & \leq K_{\sigma}\left|h\right|_{\mathcal{H}} \left(\int_{0}^{s} \int_{\mathbb{R}}\left|p_{t-r}(x, z)-p_{s-r}(x, z)\right|^{2} \mathrm{d} z \mathrm{d} r\right)^{\frac{1}{2}} \nonumber\\
			&\leq K_{\sigma}\left|h\right|_{\mathcal{H}} \left(\frac{\sqrt{2}-1}{\sqrt{\pi}} \left|t-s\right|^{\frac{1}{2}}\right)^{\frac{1}{2}} \nonumber\\
			& \leq \pi^{-\frac{1}{4}}K_{\sigma}\left|h\right|_{\mathcal{H}} |t-s|^{\frac{1}{4}}.
	\end{align}
	A similar argument as getting the above inequality implies that
	\begin{align}\label{Vn3}
			J^n_{3} 
			\leq \sqrt{2}\pi^{-\frac{1}{4}} K_{\sigma}\left|h\right|_{\mathcal{H}}|x-y|^{\frac{1}{2}}.
	\end{align}
	For the term $J^n_{2}$, by H\"older's inequality and  (\ref{3.21}) it follows that
	\begin{align}\label{Vn2}
			J^n_{2} \leq& K_{\sigma} \left|h \right|_{\mathcal{H}} \left( \int_{s}^{t} \int_{\mathbb{R}} p^2_{t-r}(x, z) \mathrm{d} z \mathrm{d} r\right)^{\frac{1}{2}} \nonumber\\
			 \leq& K_{\sigma} \left|h \right|_{ \mathcal{H}}  \left(\int_{s}^{t} \frac{1}{ \sqrt{\pi(t-r)}} \mathrm{d} r\right)^{\frac{1}{2}} \nonumber\\
			 = &\sqrt{2}\pi^{-\frac{1}{4}}K_{\sigma}\left|h \right|_{\mathcal{H}} |t-s|^{\frac{1}{4}}.
	\end{align}
Combining \eqref{Vn}-\eqref{Vn2} together yields \eqref{V_n}.

The proof of this lemma is complete.
\end{proof}

\vskip 0.5cm

\begin{proposition}\label{221105.1302}
	Assume that  $u_0\in C_{tem}$ and \textbf{Hypothesis 2} holds. Then for each $h\in\mathcal{H}$, there exists a solution $Y^h\in C([0,T],C_{tem})$ to equation \eqref{ske0}.
\end{proposition}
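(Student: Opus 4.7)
The plan is to construct a solution $Y^h$ as a subsequential limit of the approximating solutions $\{Y^n\}_{n\ge 1}$ of (\ref{ske1}), which exist and belong to $C([0,T],C_{tem})$ by Proposition \ref{Propo well posed SE 01}. First I would verify the hypotheses of the compactness criterion Proposition \ref{criteria} for $\{Y^n\}$. Condition (i) follows immediately from Lemma \ref{u_nbdd} (for any $\lambda>0$ sufficiently small), since $e^{-\lambda|x|}\le e^{-\lambda|x|e^{\beta t}}\cdot e^{\lambda|x|(e^{\beta T}-1)}$ is not quite the right direction, but in fact $e^{-\lambda|x|}\le 1$ and $e^{-\lambda|x|e^{\beta t}}\le 1$ for $t\ge 0$, so by choosing any $\lambda'<\lambda$ in Lemma~\ref{u_nbdd} and using $e^{-\lambda|x|}\le e^{-\lambda' |x|e^{\beta t}}$ for $t=0$ and a routine bookkeeping via the weight $e^{-\lambda|x|e^{\beta T}}$ on both sides, one extracts the required uniform bound. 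For condition (ii), write $Y^n(t,x)=P_tu_0(x)+U_n(t,x)+V_n(t,x)$. The continuity of $(t,x)\mapsto P_tu_0(x)$ on compact sets $[0,T]\times J_k$ is standard and independent of $n$ (using $u_0\in C_{tem}$ and properties of the heat semigroup), while the uniform modulus of continuity of $U_n$ and $V_n$ on $[0,T]\times J_k$ is exactly what Lemma \ref{UnVn} supplies.

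Given pre-compactness, I would extract a subsequence $\{Y^{n_k}\}$ and a limit $Y\in C([0,T],C_{tem})$ with $\tilde d(Y^{n_k},Y)\to 0$, which in particular yields the pointwise convergence $Y^{n_k}(s,y)\to Y(s,y)$ for every $(s,y)\in[0,T]\times\mathbb{R}$. By (\ref{bnsigmanconv}) this pointwise convergence upgrades to $b_{n_k}(Y^{n_k}(s,y))\to b(Y(s,y))$ and $\sigma_{n_k}(Y^{n_k}(s,y))\to\sigma(Y(s,y))$ pointwise in $(s,y)$.

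The main step is to pass to the limit in the mild formulation (\ref{ske1}) at each fixed $(t,x)$ by dominated convergence. For the stochastic convolution term $V_{n_k}$, the bound (\ref{sigman2}) gives the uniform envelope $p_{t-s}(x,y)|\sigma_{n_k}(Y^{n_k}(s,y))h(s,y)|\le K_\sigma p_{t-s}(x,y)|h(s,y)|$, and the integrability of the dominant follows from Cauchy--Schwarz together with $h\in\mathcal{H}$ and the heat-kernel estimate already used in (\ref{fn3}). For the drift term $U_{n_k}$, I would combine (\ref{bn2}) with the weighted bound from Lemma \ref{u_nbdd} to get
\begin{equation*}
|b_{n_k}(Y^{n_k}(s,y))|\le c_1|Y^{n_k}(s,y)|\log_+|Y^{n_k}(s,y)|+L_b(|Y^{n_k}(s,y)|+1)\le C_{\lambda}\,e^{\lambda|y|e^{\beta T}}(1+|y|),
\end{equation*}
uniformly in $k$. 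Choosing $\lambda$ small enough, the Gaussian factor in $p_{t-s}(x,y)$ dominates the exponential weight, giving a $(s,y)$-integrable envelope on $[0,t]\times\mathbb{R}$ and allowing the exchange of limit and integral. The limit $Y$ thus satisfies the mild form (\ref{ske0}), proving existence.

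The hard part is precisely this dominated-convergence step for the drift, since $b$ is only continuous with superlinear ($u\log|u|$) growth and the only a priori information on $Y^n$ is the one-sided weighted estimate of Lemma \ref{u_nbdd}; everything hinges on choosing $\lambda\in(0,\lambda_T]$ small enough that $e^{\lambda|y|e^{\beta T}}$ is absorbed by the Gaussian tail of the heat kernel uniformly in $(t,s,x)\in[0,T]^2\times[-k,k]$ with $s\le t$, while keeping the estimates from Lemmas \ref{u_nbdd} and \ref{UnVn} in force. Once this is done, identifying the limit as a solution in $C([0,T],C_{tem})$ is routine.
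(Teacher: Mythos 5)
Your proposal follows essentially the same route as the paper: pre-compactness of $\{Y^n\}$ via Proposition \ref{criteria} combined with Lemmas \ref{u_nbdd} and \ref{UnVn}, then identification of a subsequential limit as a mild solution by passing to the limit in (\ref{ske1}) at each fixed $(t,x)$. The only (immaterial) difference is the drift term, where the paper invokes the Vitali convergence theorem via uniform integrability of $\{b_n(Y^n(r,z))\}$ with respect to the finite measure $p_{t-r}(x,z)\,\mathrm{d}r\,\mathrm{d}z$, whereas you exhibit an explicit $n$-independent integrable envelope from (\ref{bn2}) and Lemma \ref{u_nbdd} and apply dominated convergence; both arguments rest on exactly the same estimates.
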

\begin{proof}
Recall $Y^{n}, {n \ge 1}$ is the solution to (\ref{ske1}), that is, for any $(t,x)\in[0,T]\times\mathbb{R}$,
\begin{align}\label{eq 2023 03}
		Y^{n}(t, x)=& P_{t} u_{0}(x)+\int_{0}^{t} \int_{\mathbb{R}} p_{t-r}(x, z) b_{n}\left(Y^{n}(r, z)\right) \mathrm{d}r\mathrm{d} z \nonumber\\
		&+\int_{0}^{t} \int_{\mathbb{R}} p_{t-r}(x,z) \sigma_{n}\left(Y^{n}(r, z)\right) h(r,z)\mathrm{d}r\mathrm{d}z.
\end{align}

By Proposition \ref{criteria}, Lemma \ref{u_nbdd} and Lemma \ref{UnVn}, $\{Y^{n}\}_{n \ge 1}$  is pre-compact in $C([0,T],C_{tem})$.
Hence, there exist a subsequence of $\{Y^{n}\}_{n\ge 1}$, still denoted as $\{Y^{n}\}_{n\ge 1}$, and $Y\in C([0,T],C_{tem})$ such that $\tilde{d}(Y^{n},Y)\to0$ as $n\to\infty$, i.e. for any $\lambda>0$,
\begin{align}\label{eq 2023 01}
	\lim_{n\to\infty}\sup_{t \leq T,x\in\mathbb{R}} \left(\left|Y^{n}(t,x)-Y(t,x)\right|e^{-\lambda |x|}\right)= 0.
\end{align}
Now, we verify that $Y$ is a solution to (\ref{ske0}).
\eqref{bn2} and Lemma \ref{u_nbdd} imply that for any fixed $(t,x)\in[0,T]\times\mathbb{R}$, $\big\{b_{n}(Y^{n}(r, z)),(r,z)\in[0,t]\times\mathbb{R}\big\}_{n\in\mathbb{N}}$ is uniformly integrable with respect to the measure $p_{t-r}(x,z)\mathrm{d}r\mathrm{d}z$ on $[0,t]\times\mathbb{R}$. Applying (\ref{bnsigmanconv}), (\ref{eq 2023 01}) and the Vitali convergence theorem yields
\begin{align}
	\lim_{n\to\infty}\int_{0}^{t} \int_{\mathbb{R}} p_{t-r}(x, z) b_{n}\left(Y^{n}(r, z)\right) \mathrm{d}r\mathrm{d} z = \int_{0}^{t} \int_{\mathbb{R}} p_{t-r}(x, z) b\left(Y(r, z)\right) \mathrm{d}r\mathrm{d}z.
\end{align}

By (\ref{sigman2}),
\begin{eqnarray*}
\sup_{n\in\mathbb{N}}|p_{t-r}(x,z)\sigma_n(Y^{n}(r,z))h(r,z)|\leq K_\sigma p_{t-r}(x,z)|h(r,z)|.
\end{eqnarray*}
And note that $h\in\mathcal{H}$ implies $\int_{0}^{t} \int_{\mathbb{R}}p_{t-r}(x,z)|h(r,z)|\mathrm{d}r\mathrm{d}z<\infty$. Hence applying (\ref{bnsigmanconv}) and (\ref{eq 2023 01}) again and the dominated convergence theorem, we have
%
\begin{align}\label{eq 2023 02}
		\lim_{n\to\infty}\int_{0}^{t} \int_{\mathbb{R}} p_{t-r}(x,z) \sigma_{n}\left(Y^{n}(r, z)\right) h(r,z)\mathrm{d}r\mathrm{d}z
		= \int_{0}^{t} \int_{\mathbb{R}} p_{t-r}(x,z) \sigma\left(Y(r, z)\right) h(r,z)\mathrm{d}r\mathrm{d}z.
\end{align}
Applying (\ref{eq 2023 01})--(\ref{eq 2023 02}) and letting $n\to\infty$ in (\ref{eq 2023 03}), we get, for any $(t,x)\in[0,T]\times\mathbb{R}$,
\begin{align*}
	Y(t, x)=& P_{t} u_{0}(x)+\int_{0}^{t} \int_{\mathbb{R}} p_{t-r}(x, z) b\left(Y(r, z)\right) \mathrm{d}r\mathrm{d} z \nonumber\\
	&+\int_{0}^{t} \int_{\mathbb{R}} p_{t-r}(x,z) \sigma\left(Y(r, z)\right) h(r,z)\mathrm{d}r\mathrm{d}z.
\end{align*}
Thus $Y\in C([0,T],C_{tem})$ is a solution to \eqref{ske0}.

The proof of this proposition is complete.
\end{proof}

\subsection{Uniqueness}

The main result of this subsection is Proposition \ref{221105.1303}, in which, under the same assumptions of Proposition \ref{Propo well posed SE 02},  we prove the uniqueness of a solution $Y^h$ to (\ref{ske0}), completing the proof of Proposition \ref{Propo well posed SE 02}.

We first prove the following result, which will be used later.
%
\begin{proposition}\label{moment}
	Let $\varphi$ be a nonnegative increasing function on $\mathbb{R}_+$ and $h\in L^2([0,T]\times\mathbb{R})$. Assume that $\tilde{\sigma}: [0,T]\times \mathbb{R}\rightarrow\mathbb{R}$ satisfies the Carath\'eodory conditions, i.e., $\tilde{\sigma}(s,y)$ is continuous with respect to $y\in\mathbb{R}$ for a.e. $s\in [0,T]$ and Borel measurable with respect to $s\in [0,T]$ for each $y\in\mathbb{R}$. Then for any $p,\eta,T>0$,
	\begin{align}\label{moment1}
			&\sup_{t\leq T, x\in\mathbb{R}}\left\{
			\left|\int_{0}^{t} \int_{\mathbb{R}} p_{t-s}(x,y)\tilde{\sigma}(s,y)h(s,y)\mathrm{d}s\mathrm{d}y\right|^{p}e^{-p\varphi(t)|x|} \right\} \nonumber\\
			\leq &\eta\sup_{t\leq T, x\in\mathbb{R}}\left(\left|\tilde{\sigma}(t,x)\right|e^{-\varphi(t)|x|}\right)^{p} \nonumber\\
			&+C_{\eta,T,\varphi(T),|h|_{\mathcal{H}},p} \int_{0}^{T} \left(\int_{\mathbb{R}}\left|h(s,y)\right|^2\mathrm{d}y\right)\sup_{y\in\mathbb{R}} \left(\left|\tilde{\sigma}(s,y)\right|e^{-\varphi(s)|y|}\right)^{p} \mathrm{d}s.
	\end{align}
\end{proposition}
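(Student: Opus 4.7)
The plan is to bound the deterministic convolution $A(t,x):=\int_0^t\int_\mathbb{R} p_{t-s}(x,y)\tilde\sigma(s,y)h(s,y)\,dy\,ds$ in three stages: first extract the weight $e^{-\varphi(s)|y|}$ from $\tilde\sigma$ and apply Cauchy--Schwarz in $y$ against the heat kernel; then apply Cauchy--Schwarz in time; finally raise to the $p$-th power and use either H\"older (for $p\geq 2$) or Young (for $0<p<2$) to reshape the right-hand side into the form required in (\ref{moment1}).

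Setting $\widetilde M(s):=\sup_{y\in\mathbb{R}}\bigl(|\tilde\sigma(s,y)|e^{-\varphi(s)|y|}\bigr)$, we have $|\tilde\sigma(s,y)|\leq \widetilde M(s)\,e^{\varphi(s)|y|}$, so Cauchy--Schwarz in $y$ yields
\[|A(t,x)|\leq \int_0^t \widetilde M(s)\Bigl(\int_\mathbb{R} p_{t-s}^2(x,y)\,e^{2\varphi(s)|y|}\,dy\Bigr)^{1/2}\Bigl(\int_\mathbb{R}|h(s,y)|^2\,dy\Bigr)^{1/2}ds.\]
Using $p_{t-s}^2(x,y)=(4\pi(t-s))^{-1/2}p_{(t-s)/2}(x,y)$, a Gaussian moment estimate of the form $\int_\mathbb{R} p_\tau(x,y)e^{a|y|}\,dy\leq 2e^{a^2\tau/2+a|x|}$ in the spirit of (3.20), and the monotonicity of $\varphi$, the bracketed heat-kernel factor is at most $C_{T,\varphi(T)}(t-s)^{-1/4}e^{\varphi(t)|x|}$. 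A second Cauchy--Schwarz in $s$ (with $\int_0^t(t-s)^{-1/2}\,ds\leq 2\sqrt{T}$ absorbed into the constant) then produces
\[|A(t,x)|\,e^{-\varphi(t)|x|}\;\leq\; C_{T,\varphi(T)}\Bigl(\int_0^T \widetilde M(s)^2 \int_\mathbb{R}|h(s,y)|^2\,dy\,ds\Bigr)^{1/2}.\]

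Raising to the $p$-th power, the task reduces to dominating $\bigl(\int_0^T \widetilde M^2\int_\mathbb{R}|h|^2\,dy\,ds\bigr)^{p/2}$ by $\eta(\widetilde M^*)^p+C_\eta\int_0^T\widetilde M^p\int_\mathbb{R}|h|^2\,dy\,ds$, where $\widetilde M^*:=\sup_{s\leq T}\widetilde M(s)$. For $p\geq 2$, H\"older's inequality with exponents $p/2$ and $p/(p-2)$ applied to the factorisation $\widetilde M^2\int|h|^2\,dy=(\widetilde M^p\int|h|^2\,dy)^{2/p}(\int|h|^2\,dy)^{(p-2)/p}$ collapses the $p/2$-power into $|h|_{\mathcal{H}}^{p-2}\int_0^T\widetilde M^p\int_\mathbb{R}|h|^2\,dy\,ds$ (no $\eta$-term is needed in this regime). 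For $0<p<2$, the trivial bound $\widetilde M(s)^2\leq(\widetilde M^*)^{2-p}\widetilde M(s)^p$ gives
\[\Bigl(\int_0^T\widetilde M^2\int_\mathbb{R}|h|^2\,dy\,ds\Bigr)^{p/2}\leq (\widetilde M^*)^{p(2-p)/2}\Bigl(\int_0^T\widetilde M^p\int_\mathbb{R}|h|^2\,dy\,ds\Bigr)^{p/2},\]
and Young's inequality in the form $A^{(2-p)/2}B^{p/2}\leq\epsilon A+C_\epsilon B$ (valid since $(2-p)/2+p/2=1$) with $A=(\widetilde M^*)^p$, $B=\int_0^T\widetilde M^p\int_\mathbb{R}|h|^2\,dy\,ds$, and $\epsilon$ tuned so that the prefactor on $(\widetilde M^*)^p$ equals $\eta$, produces the two required terms. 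Taking the supremum over $t\leq T,\ x\in\mathbb{R}$ then concludes.

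The main obstacle is the sub-quadratic regime $0<p<2$: the two Cauchy--Schwarz steps naturally leave the factor $\widetilde M(s)^2$ in the $s$-integral, and when $p<2$ one cannot extract $\widetilde M(s)^p$ from this without invoking the uniform supremum $\widetilde M^*$. The $\eta$-term in (\ref{moment1}) provides exactly the slack needed for Young's inequality to carry out this trade. A secondary technical point is the exponentially weighted heat-kernel estimate, which must be controlled by a factor of only $e^{2\varphi(t)|x|}$ (up to a constant depending on $T$ and $\varphi(T)$), and this is the source of the $\varphi(T)$ dependence in $C_{\eta,T,\varphi(T),|h|_{\mathcal{H}},p}$.
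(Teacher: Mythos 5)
Your proof is correct and follows essentially the same strategy as the paper's: a Cauchy--Schwarz/H\"older reduction against the heat kernel (using $\int_{\mathbb{R}}p_\tau^2(x,y)e^{a|y|}\,\mathrm{d}y\lesssim \tau^{-1/2}e^{a^2\tau/4}e^{a|x|}$) to arrive at a bound of the form $\bigl(\int_0^T(\cdot)^{\text{power}}\int_{\mathbb{R}}|h|^2\,\mathrm{d}y\,\mathrm{d}s\bigr)^{1/2}$, followed by a direct H\"older argument for $p\geq 2$ and, for $0<p<2$, extraction of the supremum plus Young's inequality, which is exactly where the $\eta$-term originates in the paper as well. The only cosmetic differences are that the paper uses a single three-factor H\"older inequality keeping $\tilde\sigma$ pointwise (and bootstraps the small-$p$ case from the exponent $q=3$), whereas you iterate Cauchy--Schwarz in $y$ and then in $s$ after replacing $\tilde\sigma$ by its weighted supremum; both yield the stated estimate.
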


\begin{proof}
For $p>2$, applying H\"older's inequality and (\ref{3.21}) yields
	\begin{align*}
		&\left|\int_{0}^{t} \int_{\mathbb{R}} p_{t-s}(x, y) \tilde{\sigma}(s, y) h(s, y) \mathrm{d}s\mathrm{d}y \right|\,e^{-\varphi(t)|x|}\\
		\leq& \left(\int_{0}^{t} \int_{\mathbb{R}}\left(|\tilde{\sigma}(s, y)| e^{-\varphi(s)|y|}|h(s, y)|^{\frac{2}{p}}\right)^{p} \mathrm{d}s \mathrm{d}y\right)^{\frac{1}{p}} \nonumber\\
		&\times\left(\int_{0}^{t} \int_{\mathbb{R}}|h(s, y)|^{\left(1-\frac{2}{p}\right) \frac{2 p}{p-2}} \mathrm{d}s \mathrm{d}y\right)^{\frac{p-2}{2 p}} \nonumber\\
		&\times\left(\int_{0}^{t} \int_{\mathbb{R}}\left(p_{t-s}(x, y) e^{\varphi(s)|y|} e^{-\varphi(t)|x|}\right)^{2} \mathrm{d}s\mathrm{d}y\right)^{\frac{1}{2}} \nonumber\\
		\leq &\sqrt{2}\pi^{-\frac{1}{4}} t^{\frac{1}{4}}e^{ \frac{t\varphi^2(t)}{2}} |h|_{\mathcal{H}}^{\frac{p-2}{p}}\left(\int_{0}^{t}\int_{\mathbb{R}}|\tilde{\sigma}(s, y)|^{p} e^{-p \varphi(s)|y|}|h(s, y)|^{2} \mathrm{d}s\mathrm{d}y \right)^{\frac{1}{p}}.
	\end{align*}
Taking both sides of the above inequality to the power of $p$, we obtain
	\begin{align}\label{casep>2}
			&\sup_{t\leq T, x \in\mathbb{R}}\left\{ \left| \int_{0}^{t}\int_{\mathbb{R}} p_{t-s}(x,y)\tilde{\sigma}(s,y)h(s,y)\mathrm{d}s\mathrm{d}y \right|^p e^{-p\varphi(t)|x|} \right\} \nonumber\\
			\leq& C_{T,\varphi(T),|h|_{\mathcal{H}},p} \int_{0}^{T}\int_{\mathbb{R}}e^{-p\varphi(s)|y|}|\tilde{\sigma}(s,y)|^p|h(s,y)|^2\mathrm{d}s\mathrm{d}y \nonumber\\
			\leq & C_{T,\varphi(T),|h|_{\mathcal{H}},p}\int_{0}^{T} \left(\int_{\mathbb{R}}\left|h(s,y)\right|^2\mathrm{d}y\right)\sup_{y\in\mathbb{R}} \left(\left|\tilde{\sigma}(s,y)\right|e^{-\varphi(s)|y|}\right)^{p} \mathrm{d}s,
	\end{align}
completing the prove of  \eqref{moment1} for the case of $p>2$.

    For $0<p\leq 2$ , let $q=3$, \eqref{casep>2} implies that, for any $\eta>0$,
	\begin{align*}
		&\sup_{t\leq T, x \in\mathbb{R}}\left\{ \left| \int_{0}^{t}\int_{\mathbb{R}} p_{t-s}(x,y)\tilde{\sigma}(s,y)h(s,y)\mathrm{d}s\mathrm{d}y \right|^p e^{-p\varphi(t)|x|} \right\}\\
		\leq &\left\{ C_{T,\varphi(T),|h|_{\mathcal{H}},q} \int_{0}^{T} \int_{\mathbb{R}} |\tilde{\sigma}(s,y)|^q e^{-q\varphi(s)|y|}|h(s,y)|^2\mathrm{d}s\mathrm{d}y \right\}^{\frac{p}{q}}\\
		\leq & C_{T,\varphi(T),|h|_{\mathcal{H}},p,q} \sup_{s\leq T, y\in\mathbb{R}} \left(|\tilde{\sigma}(s,y)|e^{-\varphi(s)|y|}\right)^{\frac{(q-p)p}{q} }
		\left(\int_{0}^{T} \int_{\mathbb{R}}|\tilde{\sigma}(s,y)|^pe^{-p\varphi(s)|y|}|h(s,y)|^2\mathrm{d}s\mathrm{d}y\right)^{\frac{p}{q}}\\
		\leq &\eta\sup_{s\leq T, y\in\mathbb{R}} \left(|\tilde{\sigma}(s,y)|e^{-\varphi(s)|y|}\right)^p+C_{\eta,T,\varphi(T),|h|_{\mathcal{H}},p,q} \int_{0}^{T} \int_{\mathbb{R}}|\tilde{\sigma}(s,y)|^pe^{-p\varphi(s)|y|}|h(s,y)|^2\mathrm{d}s\mathrm{d}y,
	\end{align*}
	here we have used the Young's inequality in the last inequality, completing the prove of  \eqref{moment1} for the case of $0<p\leq 2$.

The proof of this proposition is complete.
\end{proof}

\vskip 0.2cm
Now we are in a position to prove
the main result in this subsection.
\begin{proposition}\label{221105.1303}
	Under the assumptions of Proposition \ref{Propo well posed SE 02},  the equation \eqref{ske0}  has the pathwise uniqueness property.
\end{proposition}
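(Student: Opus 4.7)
The plan is to mirror the weighted-norm analysis of Lemma~\ref{u_nbdd} at the level of the difference of two solutions, combine it with the one-order moment estimate of Proposition~\ref{moment}, and close the argument by a Bihari--Osgood-type nonlinear Gronwall inequality that is forced on us by the log-Lipschitz structure of (\textbf{H1(b)}). Concretely, fix $h\in\mathcal{H}$, suppose $Y_1, Y_2 \in C([0,T], C_{tem})$ are two solutions of \eqref{ske0} with the same initial datum $u_0\in C_{tem}$, and set $Z := Y_1 - Y_2$. For $\lambda>0$ small (to be chosen) and $\beta := \beta(c_3+c_4, \lambda)$ as in \eqref{eq beta}, introduce
\[
f(T) := \sup_{t\le T,\, x\in\mathbb{R}} \bigl(|Z(t,x)|\, e^{-\lambda|x|e^{\beta t}}\bigr).
\]
Running the argument of Lemma~\ref{u_nbdd} directly on $Y_i$ (rather than on its approximation) gives a finite constant $M$ depending on $c_3+c_4, \lambda, u_0, K_\sigma, L_b, |h|_{\mathcal{H}}, T$ with $\sup_{t\le T, x}|Y_i(t,x)|e^{-\lambda|x|e^{\beta t}} \le M$ for $i=1,2$; in particular $\log_+|Y_i(s,y)| \le \lambda|y|e^{\beta s} + \log_+ M$.

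Writing $Z(t,x) = A(t,x) + B(t,x)$ as the drift plus noise contributions, $A$ is treated exactly as terms I, II, III of Lemma~\ref{u_nbdd}. By (\textbf{H1(b)}) the integrand is bounded by $c_3|Z|\log_+(1/|Z|) + c_4\log_+(|Y_1|\vee|Y_2|)|Z| + c_5|Z|$; one pushes each through the heat-kernel weight using the identity $|Z(s,y)| = (|Z(s,y)|e^{-\lambda|y|e^{\beta s}})\cdot e^{\lambda|y|e^{\beta s}}$ together with $\log_+(ab)\le \log_+ a+\log_+ b$, and absorbs the spatial-growth factor $\lambda|y|e^{\beta s}$ arising inside the logarithms into a linear-in-$f$ term via the a priori bound on $Y_i$ above. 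Inequalities \eqref{3.20} and \eqref{3.22} propagate the exponential weight across the heat kernel, and by taking $\lambda$ small with $T\le T^*(c_3+c_4,\lambda)$ (cf.\ \eqref{T*}) the term analogous to~III absorbs a factor $\tfrac14$ of $f(T)$, yielding
\[
\sup_{t\le T,\, x\in\mathbb{R}} \bigl(|A(t,x)|\, e^{-\lambda|x|e^{\beta t}}\bigr) \;\le\; \tfrac{1}{4}\, f(T) + C \int_0^T \bigl[f(s) + f(s)\log_+(1/f(s))\bigr] ds.
\]
For $B$, apply Proposition~\ref{moment} with $p=1$, $\varphi(t)=\lambda e^{\beta t}$, $\tilde\sigma(s,y):=\sigma(Y_1(s,y))-\sigma(Y_2(s,y))$ (Carath\'eodory since $Y_i$ are continuous), using $|\tilde\sigma|\le L_\sigma|Z|$ from (\textbf{H1(c)}); choosing $\eta$ small produces
\[
\sup_{t\le T,\,x\in\mathbb{R}} \bigl(|B(t,x)|\, e^{-\lambda|x|e^{\beta t}}\bigr) \;\le\; \tfrac{1}{4}\, f(T) + C\int_0^T g(s) f(s)\, ds, \qquad g(s):=\int_{\mathbb{R}}|h(s,y)|^2 dy.
\]

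Adding the two bounds and absorbing $\tfrac12 f(T)$ into the left-hand side produces, on $[0,T^*(c_3+c_4,\lambda)]$,
\[
f(T)\;\le\; C\int_0^T \bigl[(1+g(s))f(s) + f(s)\log_+(1/f(s))\bigr] ds.
\]
Since $r\mapsto r\log_+(1/r)$ satisfies the Osgood condition $\int_0^{1} dr/(r\log_+(1/r))=\infty$, a Bihari-type nonlinear Gronwall lemma (one of the ``two nonlinear Gronwall-type inequalities'' highlighted in the introduction and also used in Lemma~\ref{u_nbdd}), applied with the $L^1$ weight $1+g(\cdot)$ in front of the linear term, forces $f\equiv 0$ on $[0, T^*(c_3+c_4,\lambda)]$; iterating over finitely many sub-intervals of that length covers $[0,T]$ and proves $Y_1\equiv Y_2$. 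The main obstacle is precisely the superlinear log-term $c_3|Z|\log_+(1/|Z|)$: it rules out any linear Gronwall argument and couples nontrivially with the time-dependent exponential weight $e^{-\lambda|x|e^{\beta t}}$, so the choice $\beta=\beta(c_3+c_4,\lambda)$ (rather than $\beta(c_1,\lambda)$) is needed so that every heat-kernel exponential integral closes while preserving the Osgood regularity of the right-hand side.
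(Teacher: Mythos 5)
Your overall strategy is exactly the paper's: estimate the weighted difference $f(T)=\sup_{t\le T,x}|Y_1-Y_2|e^{-\lambda|x|e^{\beta t}}$, treat the drift via (\textbf{H1(b)}) and the control term via Proposition \ref{moment} with $p=1$, absorb a fraction of $f(T)$ using the condition $T\le T^*(\cdot,\lambda)$, and close with the logarithmic Gronwall inequality (Lemma \ref{Gronwall2} with $c_0=0$). Two of your choices are harmless deviations: the paper takes $\beta=\beta(c_4,\lambda)$ rather than $\beta(c_3+c_4,\lambda)$, since only the $c_4$-term produces the $\lambda|y|e^{\beta s}$ factor that forces the $\tfrac{c_4}{\beta}e^{\cdots}\le\tfrac12$ absorption; and there is no need to re-run Lemma \ref{u_nbdd} to get the bound $M$ on $Y_i$, because $Y_i\in C([0,T],C_{tem})$ already makes $\sup_{t\le T,x}|Y_i|e^{-\lambda|x|e^{\beta t}}$ finite. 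Also, the paper avoids your final iteration over subintervals by simply shrinking $\lambda$ so that $T\le T^*$ holds outright.

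There is, however, one genuine gap in your treatment of the $c_3$-term. After writing $|Z|\log_+\tfrac{1}{|Z|}\le e^{w}\,(|Z|e^{-w})\log_+\tfrac{1}{|Z|e^{-w}}$ with $w=\lambda|y|e^{\beta s}$, you need to pull the supremum over $y$ through the map $r\mapsto r\log_+\tfrac1r$ to arrive at $f(s)\log_+\tfrac{1}{f(s)}$ under the time integral. That map is increasing only on $(0,e^{-1})$, so the inequality
\begin{align*}
\sup_{y}\Bigl[\bigl(|Z(s,y)|e^{-w}\bigr)\log_+\tfrac{1}{|Z(s,y)|e^{-w}}\Bigr]\le f(s)\log_+\tfrac{1}{f(s)}
\end{align*}
is false in general (e.g.\ if $f(s)\ge 1$ the right-hand side vanishes while the left can be as large as $e^{-1}$). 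The paper handles this by introducing the time
\begin{align*}
t^{\delta}:=\inf\Bigl\{t>0:\ \sup_{x}\bigl(|Y_1(t,x)-Y_2(t,x)|e^{-\lambda|x|e^{\beta t}}\bigr)\ge\delta\Bigr\}\wedge T,\qquad 0<\delta\le e^{-1},
\end{align*}
working only up to $t^{\delta}$ (so all relevant values lie in $[0,e^{-1}]$ and the monotonicity applies), concluding $g\equiv 0$ on $[0,t^{\delta}]$, and then deducing $t^{\delta}=T$. Your write-up omits this localization, so the displayed bound on the $A$-term does not follow as stated. The fix is short — either insert the $t^{\delta}$ device, or replace the bound by $a\log_+\tfrac1a\le f\log_+\tfrac1f+f$ for $0\le a\le f$, which only adds a harmless linear term — but as written the step fails.
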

\begin{proof}

Suppose that $Y_1,Y_2 \in C\left([0,T],C_{tem}\right)$ are two solutions of equation (\ref{ske0}),
then
\begin{align}\label{Y1-Y2}
	Y_1(t,x)-Y_2(t,x)=&\int_{0}^{t}\int_{\mathbb{R}} p_{t-s}(x,y) \left[b(Y_1(s,y))-b(Y_2(s,y))\right]\mathrm{d}s\mathrm{d}y \nonumber\\
	&+\int_{0}^{t}\int_{\mathbb{R}} p_{t-s}(x,y)\left[\sigma(Y_1(s,y))-\sigma(Y_2(s,y))\right]h(s,y)\mathrm{d}s\mathrm{d}y.
\end{align}
Recall $\lambda_{T}$ appearing in the proof of Lemma \ref{u_nbdd}; see the above of (\ref{T*}). Now fix $\lambda\in(0,\lambda_T]$, we have $T \leq T^{*}\left(c_{4},\lambda\right)$. In the following we write $\beta\left(c_{4},\lambda\right)$ as $\beta$ for simplicity, where $c_4$ is the constant appearing in {\rm(\textbf{H1(b)})}. Let $0<\delta \leq e^{-1}$. Define
	\begin{align*}
		t^{\delta}:=\inf \left\{t>0: \sup _{x \in \mathbb{R}}\left(|Y_1(t, x)-Y_2(t, x)| e^{-\lambda|x| e^{\beta t}}\right) \geq \delta\right\}\wedge T>0.
	\end{align*}
Set
	\begin{align*}
		g(r)=\sup_{t \leq r \wedge t^{\delta}, x \in\mathbb{R}}\left( |Y_1(t,x)-Y_2(t,x)|e^{-\lambda |x|e^{\beta t}} \right).
	\end{align*}
Then $g(r)< \infty$ and
\begin{align}\label{221105.1945}
	g(r) \leq & \sup_{t \leq r \wedge t^{\delta}, x \in\mathbb{R}} \left\{\int_{0}^{t}\int_{\mathbb{R}} p_{t-s}(x,y)\left|b(Y_1(s,y))-b(Y_2(s,y))\right|\mathrm{d}s\mathrm{d}y\,e^{-\lambda|x|e^{\beta t}}\right\} \nonumber\\
	& + \sup_{t \leq r \wedge t^{\delta}, x \in\mathbb{R}} \left\{\int_{0}^{t}\int_{\mathbb{R}} p_{t-s}(x,y)| \sigma(Y_1(s,y))-\sigma(Y_2(s,y))| |h(s,y)|\mathrm{d}s\mathrm{d}y \,e^{-\lambda|x|e^{\beta t}}\right\} \nonumber\\
	=:&J_1+J_2.
\end{align}
Since $h\in \mathcal{H}$,
\begin{align}\label{gamma}
	\gamma(s):=\int_{\mathbb{R}}\left|h(s,y)\right|^2\mathrm{d}y
\end{align}
is integrable on $[0,T]$.
Applying Proposition \ref{moment} with $p=1$, the term $J_2$ can be estimated by
\begin{align}\label{221105.1946}
	J_2 \leq & \eta\sup_{s \leq r \wedge t^{\delta}, y \in\mathbb{R}}\left(\left|\sigma(Y_1(s,y))-\sigma(Y_2(s,y))\right|e^{-\lambda|y|e^{\beta s}}\right) \nonumber\\
			&+C_{\eta,\lambda,\beta,|h|_{\mathcal{H}},T} \int_{0}^{r \wedge t^{\delta}} \gamma(s) \sup_{y\in\mathbb{R}} \left(\left|\sigma(Y_1(s,y))-\sigma(Y_2(s,y))\right|e^{-\lambda|y|e^{\beta s}}\right) \mathrm{d}s \nonumber\\
			\leq & \eta L_{\sigma} g(s) + C_{\eta,\lambda,\beta,|h|_{\mathcal{H}}, L_{\sigma},T}\int_0^r \gamma(s) g(s)\mathrm{d}s.
\end{align}
We now estimate the term $J_1$.
The assumption ({\textbf{H1(b)}}) gives that
\begin{align}\label{uniq1}
J_1 \leq & c_{3}\sup_{t \leq r \wedge  t^\delta, x \in \mathbb{R}}\left\{\int_{0}^{t} \int_{\mathbb{R}} p_{t-s}(x, y) |Y_1(s,y)-Y_2(s,y)|\log_{+} \frac{1}{|Y_1(s,y)-Y_2(s,y)|} \mathrm{d} s \mathrm{d}y \,e^{-\lambda|x| e^{\beta t}}\right\} \nonumber\\
	&+c_{4}\sup _{t \leq r \wedge  t^\delta, x \in \mathbb{R}}\left\{\int_{0}^{t} \int_{\mathbb{R}} p_{t-s}(x, y)  \log_+ \left(|Y_1(s,y)|\vee|Y_2(s,y)|\right)|Y_1(s,y)-Y_2(s,y)| \mathrm{d}s\mathrm{d}y\,e^{-\lambda|x| e^{\beta t}}\right\} \nonumber\\
	&+c_{5}\sup _{t \leq r \wedge  t^\delta, x \in \mathbb{R}}\left\{\int_{0}^{t} \int_{\mathbb{R}} p_{t-s}(x, y) |Y_1(s,y)-Y_2(s,y)|\mathrm{d}s \mathrm{d} y \,e^{-\lambda|x| e^{\beta t}}\right\} \nonumber\\
	=:& J_{11}+J_{12}+J_{13}.
\end{align}
Using the fact that the function $x \mapsto x \log_+ \frac{1}{x}$ is increasing on $\left(0, e^{-1}\right)$ and (\ref{4.37}), we get
\begin{align}\label{uniq11}
	J_{11} \leq& c_{3} \sup_{t \leq r \wedge t^\delta, x \in \mathbb{R}}\left\{\int_{0}^{t} \int_{\mathbb{R}} p_{t-s}(x, y) e^{\lambda|y| e^{\beta s}}|Y_1(s,y)-Y_2(s,y)| \,e^{-\lambda|y| e^{\beta s}} \right.\nonumber\\
	&\qquad \left.\times\log_{+} \frac{1}{|Y_1(s,y)-Y_2(s,y)|e^{-\lambda|y| e^{\beta s}}} \mathrm{d} s \mathrm{d} y e^{-\lambda|x| e^{\beta t}}\right\} \nonumber\\
	\leq &c_{3} \sup_{t \leq r \wedge t^\delta, x \in \mathbb{R}}\left\{\int_{0}^{t} \sup_{y \in \mathbb{R}}\left(|Y_1(s,y)-Y_2(s,y)|\,e^{-\lambda|y| e^{\beta s}}\log_{+} \frac{1}{|Y_1(s,y)-Y_2(s,y)|e^{-\lambda|y| e^{\beta s}} } \right)\right.\nonumber\\
	&\qquad\left.\times\int_{\mathbb{R}} p_{t-s}(x, y) e^{\lambda|y| e^{\beta s}} \mathrm{d}y \mathrm{d}s \,e^{-\lambda|x| e^{\beta t}}\right\} \nonumber\\
	\leq &2 c_{3} e^{\frac{\lambda^{2}}{4 \beta} e^{2 \beta r-1}} \int_{0}^{r} \sup_{\rho \leq s \wedge t^\delta, y \in \mathbb{R}}\left(|Y_1(\rho, y)-Y_2(\rho, y)| e^{-\lambda|y| e^{\beta \rho}}\right) \nonumber\\
	&\qquad \times\log_{+} \frac{1}{\sup_{\rho \leq s \wedge t^\delta, y \in \mathbb{R}}\left(|Y_1(\rho, y)-Y_2(\rho, y)| e^{-\lambda|y| e^{\beta \rho}}\right)} \mathrm{d} s\nonumber\\
	\leq& 2 c_{3} e^{\frac{\lambda^{2}}{4 \beta} e^{2 \beta r-1}} \int_{0}^{r} g(s) \log_{+} \frac{1}{g(s)} \mathrm{d} s.
\end{align}
Similarly, we can prove that
\begin{align}\label{uniq13}
	J_{13} \leq 2 c_{5} e^{\frac{\lambda^{2}}{4 \beta} e^{2 \beta r-1}} \int_{0}^{r} g(s) \mathrm{d} s .
\end{align}
The inequality $\log_+(ab)\leq\log_+a+\log_+b$ leads to
\begin{align}\label{uniq12}
	J_{12} \leq& c_{4} \sup _{t\leq r \wedge t^\delta, x \in \mathbb{R}}\left\{\int_{0}^{t} \int_{\mathbb {R} } p_{t-s}(x,y)e^{\lambda|y| e^{\beta s}}  \left[\log_{+}\left(\left(|Y_1(s,y)| e^{-\lambda|y| e^{\beta s}}\right) \vee\left(|Y_2(s,y)| e^{-\lambda|y| e^{\beta s}}\right)\right)\right.\right. \nonumber\\
	&\left.\left.+\lambda|y| e^{\beta s}\right]|Y_1(s,y)-Y_2(s,y)| e^{-\lambda|y| e^{\beta s}}\mathrm{d}s\mathrm{d}y  \,e^{-\lambda|x| e^{\beta t}}\right\} \nonumber \\
	\leq & c_{4}\sup _{t \leq r \wedge t^\delta, x \in \mathbb{R}}\left\{\int_{0}^{t} \sup_{y \in \mathbb{R}}\left(|Y_1(s,y)-Y_2(s,y)| e^{-\lambda|y| e^{\beta s}}\right) \int_{\mathbb{R}} p_{t-s}(x, y) e^{\lambda|y| e^{\beta s}} \lambda|y| e^{\beta s}  \mathrm{d}y\mathrm{d}s \,e^{-\lambda|x| e^{\beta t}}\right\}\nonumber \\
	&+c_{4} (\log_{+} M) \sup_{t \leq r \wedge t^\delta, x \in \mathbb{R}}\left\{\int_{0}^{t} \sup_{y\in\mathbb{R}} \left(|Y_1(s,y)-Y_2(s,y)| e^{-\lambda|y| e^{\beta s}}\right)\int_{\mathbb{R}} p_{t-s}(x, y) e^{\lambda|y| e^{\beta s}} \mathrm{d}y\mathrm{d}s \,e^{-\lambda|x| e^{\beta t}}\right\} \nonumber\\
	\leq & \frac{c_{4}}{\beta} e^{\frac{\lambda ^{2}}{4 \beta} e^{2 \beta T-1}}g(r)
	+C_{c_4,\lambda,M,T}\int_{0}^{r} g(s) \mathrm{d}s,
\end{align}
where
\begin{align*}
	M:=\max\Big\{\sup_{s\leq T, y\in\mathbb{R}}\left(|Y_1(s,y)| e^{-\lambda|y| e^{\beta s}}\right), \sup_{s\leq T, y\in\mathbb{R}}\left(|Y_2(s,y)| e^{-\lambda|y| e^{\beta s}}\right)\Big\} .
\end{align*}
Recall that we take $\lambda>0$ sufficiently small such that $T \leq T^{*}\left(c_{4},\lambda\right)$ and $\beta=\beta(c_{4},\lambda)$. Using the fact
\begin{align*}
	\frac{c_{4}}{\beta} e^{\frac{\lambda ^{2}}{4 \beta} e^{2 \beta T-1}} \leq \frac{1}{2} \Longleftrightarrow T \leq T^{*}\left( c_{4},\lambda \right)=\frac{1}{2 \beta}\left[1+\log \left(\frac{4 \beta}{\lambda^{2}} \log \frac{\beta}{2 c_{4}}\right)\right],
\end{align*}
we obtain
\begin{align}\label{221105.1947}
	J_{12}\leq \frac{1}{2} g(r)+C_{c_4,\lambda,M,T} \int_{0}^{r} g(s)\mathrm{d}s.
\end{align}
Combining (\ref{221105.1945}), (\ref{221105.1946})-(\ref{uniq13}) and (\ref{221105.1947}) together yields
\begin{align}\label{uniq}
		g(r)\le &\frac{1}{2} g(r)+C_{c_4,c_5,\lambda, M,T} \int_{0}^{r} g(s)\mathrm{d}s+C_{c_3,c_4,\lambda,T} \int_{0}^{r} g(s)\log_+\frac{1}{g(s)}\mathrm{d}s \nonumber\\
		&+\eta L_\sigma g(r)+C_{\eta,c_4,\lambda,|h|_{\mathcal{H}},L_{\sigma},T}\int_{0}^{r} \gamma(s)g(s)\mathrm{d}s.
\end{align}
Taking $\eta< \frac{1}{2L_{\sigma}}$ and by Lemma \ref{Gronwall2}, we get $g(r)\equiv 0$ for any $r\in[0,t^\delta]$. It follows by the definition of $t^\delta$   that $t^\delta= T$. And then $Y_1(t,x)=Y_2(t,x),\ \forall \ (t,x)\in[0,T]\times\mathbb{R}$, completing the proof of this proposition.
\end{proof}

\section{Verifications of Claims (\textbf{C1}) and (\textbf{C2}): Case 1}\label{221107.1429}
\setcounter{equation}{0}

In this section, we assume that

\begin{center}
{\bf Assumption Case 1}:\ \ \ \ \ \ \ \ \  $u_0\in C_{tem}$, {\rm(\textbf{H1(b)})} and {\rm(\textbf{H1(c)})} hold.
\end{center}
Under this assumption, we will verify Claims (\textbf{C1}) and (\textbf{C2}) in the proof of Theorem \ref{221107.1426}; see Propositions \ref{C1} and \ref{C2}, respectively.

\subsection{Verification of (\textbf{C1})}

Recall the definition of $\mathcal{G}^0$ introduced in the proof of Theorem \ref{221107.1426}.
In this subsection, we verify Claim (\textbf{C1}). That is
\begin{proposition}\label{C1}
Under {\bf Assumption Case 1}, for any $N>0$, $\{h_n\}_{n\in\mathbb{N}}\subset \mathcal{H}_N$ and $h\in\mathcal{H}_N$ with $h_n\to h$ weakly in $\mathcal{H}$ as $n\to\infty$, then $\lim_{n\rightarrow\infty}\mathcal{G}^0(\text{Int}(h_n))=\mathcal{G}^0(\text{Int}(h))$ in $C([0,T],C_{tem})$.
%
%
\end{proposition}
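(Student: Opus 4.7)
The idea is to reduce the claim to showing that the \emph{control remainder}
\[
R_n(t,x):=\int_0^t\!\!\int_{\mathbb R} p_{t-s}(x,y)\,\sigma(Y^h(s,y))\,\bigl[h_n(s,y)-h(s,y)\bigr]\,\mathrm{d}s\,\mathrm{d}y
\]
vanishes in the appropriate weighted sup-norm, and then closing a nonlinear Gronwall on the difference $D_n:=Y^{h_n}-Y^h$ in the spirit of the uniqueness proof of Proposition~\ref{221105.1303}. As a preparatory step I would re-run the proofs of Lemma~\ref{u_nbdd} and Lemma~\ref{UnVn} verbatim with $(b_n,\sigma_n)$ replaced by $(b,\sigma)$ and with the uniform bound $|h_n|_{\mathcal H}\le N$: those proofs used only (\textbf{H1(a)}), (\textbf{H1(c)}) and Hölder's inequality in $h$, so they deliver
$\sup_n\sup_{t\le T,x\in\mathbb R}|Y^{h_n}(t,x)|e^{-\lambda|x|e^{\beta t}}<\infty$
for every $\lambda>0$, together with a uniform-in-$n$ modulus of continuity of $Y^{h_n}$ on every $[0,T]\times J_k$.

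The heart of the proof is
$\sup_{t\le T,x\in\mathbb R}|R_n(t,x)|e^{-\lambda|x|}\to 0$.
For the tail, Hölder together with $|\sigma|\le K_\sigma$, $|h_n-h|_{\mathcal H}\le 2N$, and the identity $\int_0^t\!\int_{\mathbb R}p_{t-s}^2(x,y)\,\mathrm{d}s\,\mathrm{d}y=\sqrt{t/\pi}$ gives the uniform bound $|R_n(t,x)|\le 2K_\sigma N\sqrt{T/\pi}$, so the factor $e^{-\lambda|x|}$ makes the contribution from $|x|>R$ uniformly small once $R$ is large. On the compact rectangle $[0,T]\times[-R,R]$ I would combine pointwise convergence with equicontinuity. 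For each fixed $(t,x)$ the kernel $(s,y)\mapsto \mathbf{1}_{[0,t]}(s)\,p_{t-s}(x,y)\,\sigma(Y^h(s,y))$ lies in $L^2([0,T]\times\mathbb R)$, so the weak convergence $h_n\rightharpoonup h$ yields $R_n(t,x)\to 0$. The same computations as in the proof of Lemma~\ref{UnVn} for $V_n$, now applied with the bounded factor $\sigma(Y^h)$ in place of $\sigma_n(Y^n)$ and with $h_n-h\in\mathcal H_{2N}$, give $\sup_n|R_n(t,x)-R_n(s,y)|\le C_{N}(|t-s|^{1/4}+|x-y|^{1/2})$ on $[0,T]\times[-R,R]$. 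Pointwise convergence on a countable dense subset of this rectangle, together with Arzelà--Ascoli, then promotes this to uniform convergence on $[0,T]\times[-R,R]$.

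With $\varepsilon_n:=\sup_{t\le T,x\in\mathbb R}|R_n(t,x)|e^{-\lambda|x|}\to 0$ in hand, I would set
$g_n(r):=\sup_{t\le r,x\in\mathbb R}|D_n(t,x)|e^{-\lambda|x|e^{\beta t}}$
and mimic the uniqueness proof of Proposition~\ref{221105.1303}. The $b$-difference contributes $\int_0^r g_n(s)\log_+(1/g_n(s))\,\mathrm{d}s$ together with linear terms via (\textbf{H1(b)}); the $\sigma$-difference weighted by $h_n$ is controlled by Proposition~\ref{moment} with $p=1$, producing a term linear in $g_n$ weighted by $\gamma_n(s):=\int_{\mathbb R}|h_n(s,y)|^2\,\mathrm{d}y\in L^1([0,T])$ with $\int_0^T\gamma_n\le N^2$. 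Choosing $\lambda\in(0,\lambda_T]$ so that $T\le T^*(c_4,\lambda)$ and $\eta<(2L_\sigma)^{-1}$, one obtains
\begin{align*}
g_n(r)\le \tfrac12 g_n(r)+C\int_0^r g_n(s)\log_+\tfrac{1}{g_n(s)}\,\mathrm{d}s+C\int_0^r(1+\gamma_n(s))g_n(s)\,\mathrm{d}s+\varepsilon_n.
\end{align*}
A perturbed Osgood/Bihari-type Gronwall (which I expect Lemma~\ref{Gronwall2} provides in the paper) then yields $g_n\to 0$ uniformly on $[0,T^*(c_4,\lambda)]$, and iterating the argument over successive subintervals covers $[0,T]$. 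The main obstacle is precisely the uniform-in-$x$ convergence of $R_n$: weak convergence only supplies pointwise information, so the truncation-plus-equicontinuity argument above is the essential new ingredient beyond what appears in the uniqueness proof.
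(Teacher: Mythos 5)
Your proposal is correct and follows essentially the same route as the paper: the same splitting of the $\sigma h$-term into the remainder $R_n$ (handled by spatial truncation plus the uniform $(|t-s|^{1/4}+|x-y|^{1/2})$-equicontinuity to upgrade the pointwise consequence of weak convergence to uniform convergence on compacts) and a Lipschitz part controlled by Proposition~\ref{moment} with $p=1$, followed by the same nonlinear Gronwall closure via Lemma~\ref{Gronwall2}. The only point to make explicit when writing it up is the localization $t\le t_n^{\delta}$ (with $\delta\le e^{-1}$) in the definition of $g_n$, which the paper uses so that the monotonicity of $x\mapsto x\log_+(1/x)$ on $(0,e^{-1})$ legitimately converts the inner supremum into $g_n(s)\log_+(1/g_n(s))$; you implicitly inherit this from the uniqueness proof you cite, but it should not be dropped.
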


\vskip 0.2cm

%

In this subsection, we denote $Y^h:=\mathcal{G}^0(\text{Int}(h))$ and $Y^{h_n}:=\mathcal{G}^0(\text{Int}(h_n))$ for simplicity.
%
%
%
Before the proof Proposition \ref{C1}, we state Lemma \ref{u^nbdd} and Lemma \ref{U^nV^n} below, whose proofs are similar to Lemma \ref{u_nbdd} and Lemma \ref{UnVn} respectively and hence  omitted here.
\begin{lemma}\label{u^nbdd}
For any $T, \lambda>0$,
	\begin{align}\label{221106.2100}
	\sup_{n\in\mathbb{N}}\sup_{t\leq T,x\in\mathbb{R}} \left(|Y^{h_n}(t,x)|e^{-\lambda|x|}\right)\leq C_{c_1,c_2,\lambda, u_0,K_\sigma,N,T}.
	\end{align}
\end{lemma}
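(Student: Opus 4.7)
The plan is to mirror the argument of Lemma \ref{u_nbdd} essentially verbatim, noting that the only structural differences are cosmetic: $Y^{h_n}$ solves the skeleton equation with the true coefficients $(b,\sigma)$ rather than the regularized pair $(b_n,\sigma_n)$, and the role of the generic control $h$ with $|h|_{\mathcal{H}}<\infty$ is played by a sequence $h_n\in\mathcal{H}_N$ whose norms are uniformly bounded by $N$. Crucially, the bound \eqref{bn2} on $b_n$ that drove the estimate there is replaced by (\textbf{H1(a)}), which gives $|b(u)|\le c_1|u|\log_+|u|+c_2$ with the \emph{same} functional form, while the uniform bound $|\sigma_n|\le K_\sigma$ is replaced by $|\sigma|\le K_\sigma$ from (\textbf{H1(c)}). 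Uniformity in $n$ then comes directly from the uniform bound $|h_n|_{\mathcal{H}}\le N$.

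First I would reduce, as in Lemma \ref{u_nbdd}, to showing the estimate under the time-dependent weight $e^{-\lambda'|x|e^{\beta t}}$ for all sufficiently small $\lambda'\in(0,\lambda_T]$, where $\beta=\beta(c_1,\lambda')$ and $\lambda_T$ is as in fact (F1), so that $T\le T^*(c_1,\lambda')$. Setting
\[
f_n(s):=\sup_{t\le s,\,x\in\mathbb{R}}\bigl(|Y^{h_n}(t,x)|\,e^{-\lambda'|x|e^{\beta t}}\bigr),
\]
I would split the mild formulation of the skeleton equation for $Y^{h_n}$ into the three terms $P_tu_0$, the drift with $b$, and the control with $\sigma\cdot h_n$, and reproduce the three estimates \eqref{fn1}, \eqref{fn2}, \eqref{fn3}. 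The first is bounded by $2e^{\lambda'^2 T/2}|u_0|_{(-\lambda')}$; the control term is bounded by $\sqrt{2}\pi^{-1/4}K_\sigma |h_n|_{\mathcal{H}}T^{1/4}\le\sqrt{2}\pi^{-1/4}K_\sigma N T^{1/4}$ via H\"older and (\ref{3.21}); the drift term splits into the three pieces I, II, III exactly as in (\ref{fn2}), whose bounds use (\ref{4.37}), (\ref{4.3}) and the fact that $T\le T^*(c_1,\lambda')$ to absorb $\tfrac12 f_n(T)$ on the left. Applying the nonlinear Gronwall inequality (Lemma \ref{Gronwall1}) then yields
\[
\sup_{t\le T,\,x\in\mathbb{R}}\bigl(|Y^{h_n}(t,x)|\,e^{-\lambda'|x|e^{\beta t}}\bigr)\le C_{c_1,c_2,\lambda',u_0,K_\sigma,N,T},
\]
with the right-hand side independent of $n$ because the only place where $h_n$ entered is through its $\mathcal{H}$-norm, which is dominated by $N$. (The constant $L_b$ in the proof of Lemma \ref{u_nbdd}, coming from the regularization, is here replaced by the constant $c_2$ from (\textbf{H1(a)}), which is why $c_2$ appears in the final constant.)

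Finally, to pass from the time-dependent weight $e^{-\lambda'|x|e^{\beta t}}$ to the static weight $e^{-\lambda|x|}$ required in the statement, I would use the simple observation that $|Y^{h_n}(t,x)|\le C\,e^{\lambda'|x|e^{\beta T}}$ on $[0,T]\times\mathbb{R}$, so choosing $\lambda'=\min\{\lambda_T,\ \lambda\,e^{-\beta(c_1,\lambda_T)T}\}$ (small enough that $\beta(c_1,\lambda')=4c_1$ is constant and $\lambda'e^{4c_1T}\le\lambda$) gives $|Y^{h_n}(t,x)|e^{-\lambda|x|}\le C_{c_1,c_2,\lambda,u_0,K_\sigma,N,T}$, uniformly in $n$, which is the desired conclusion. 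The only conceptually non-trivial step is the absorption of the super-linear drift contribution III by the factor $\tfrac12$ on the left-hand side, but this is already handled by the choice $T\le T^*(c_1,\lambda')$ as in (\ref{T*}); everything else is bookkeeping.
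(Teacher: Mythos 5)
Your proposal is correct and follows exactly the route the paper intends: the paper omits this proof, stating only that it is "similar to Lemma \ref{u_nbdd}", and your argument is precisely that adaptation — replacing \eqref{bn2} by (\textbf{H1(a)}), the bound on $\sigma_n$ by $|\sigma|\le K_\sigma$ from (\textbf{H1(c)}), and $|h|_{\mathcal{H}}$ by the uniform bound $N$, then absorbing the term III via $T\le T^*(c_1,\lambda')$ and applying Lemma \ref{Gronwall1}. Your final conversion from the time-dependent weight $e^{-\lambda'|x|e^{\beta t}}$ to the static weight $e^{-\lambda|x|}$ (choosing $\lambda'$ small enough that $\lambda' e^{\beta T}\le\lambda$, with $\beta$ stabilized at $4c_1$ for small $\lambda'$) is a detail the paper leaves implicit, and you handle it correctly.
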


\vskip 0.5cm
Set
\begin{align}
	F_n(t,x):=\int_{0}^{t}\int_{\mathbb{R}}p_{t-r}(x,z)\sigma(Y^h(r,z))\left(h_n(r,z)-h(r,z)\right)\mathrm{d}r\mathrm{d}z.
\end{align}
\begin{lemma}\label{U^nV^n}
	There exists a constant $C_{K_{\sigma},N}$ such that, for any $s,t \in[0,T]$ and $x, y \in \mathbb{R}$,
	\begin{align*}
\sup_{n\in\mathbb{N}}|F_n(t,x) - F_n(s,y)|\leq & C_{K_{\sigma}, N}\left(|t-s|^{\frac{1}{4}}+|x-y|^{\frac{1}{2}}\right).
	\end{align*}
\end{lemma}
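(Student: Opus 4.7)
The plan is to mimic directly the estimate for $V_n$ in Lemma \ref{UnVn}, because $F_n$ has exactly the same analytic structure: a space–time convolution of the heat kernel against the product of a uniformly bounded factor and an $L^2$ control. Concretely, the key observations that make the proof of $V_n$ work transfer verbatim: by hypothesis \textrm{(\textbf{H1(c)})} we have $|\sigma(Y^h(r,z))|\le K_\sigma$ for every $(r,z)$, and because $h_n,h\in\mathcal{H}_N$ the difference satisfies $|h_n-h|_{\mathcal{H}}\le 2N$. Thus, in all of the estimates, the pair $(\sigma_n(Y^n),h)$ in Lemma \ref{UnVn} can be replaced by $(\sigma(Y^h),h_n-h)$ with the constant $|h|_{\mathcal{H}}$ replaced by $2N$, and nothing else in the argument changes.

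The first step is the usual triangle decomposition (assume without loss of generality $t\ge s$):
\begin{align*}
|F_n(t,x)-F_n(s,y)|
\le &\ \Bigl|\int_{0}^{s}\!\!\int_{\mathbb{R}}\!\bigl[p_{t-r}(x,z)-p_{s-r}(x,z)\bigr]\sigma(Y^h(r,z))\bigl(h_n-h\bigr)(r,z)\,\mathrm{d}r\mathrm{d}z\Bigr|\\
&+\Bigl|\int_{s}^{t}\!\!\int_{\mathbb{R}}p_{t-r}(x,z)\sigma(Y^h(r,z))\bigl(h_n-h\bigr)(r,z)\,\mathrm{d}r\mathrm{d}z\Bigr|\\
&+\Bigl|\int_{0}^{s}\!\!\int_{\mathbb{R}}\!\bigl[p_{s-r}(x,z)-p_{s-r}(y,z)\bigr]\sigma(Y^h(r,z))\bigl(h_n-h\bigr)(r,z)\,\mathrm{d}r\mathrm{d}z\Bigr|\\
=:&\ \tilde J_{1}^{n}+\tilde J_{2}^{n}+\tilde J_{3}^{n}.
\end{align*}

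The second step is to estimate each $\tilde J_i^n$ by Cauchy--Schwarz in $(r,z)$, pulling out $\|\sigma(Y^h)\|_\infty\le K_\sigma$ and $|h_n-h|_{\mathcal{H}}\le 2N$, and then invoking the same heat kernel $L^2$ estimates already used for $V_n$ (those labeled \eqref{3.21} and \eqref{5oflem3.3} in the paper). Precisely, the calculation in \eqref{Vn1} gives $\tilde J_1^n\le C K_\sigma N\, |t-s|^{1/4}$; the one in \eqref{Vn3} gives $\tilde J_3^n\le C K_\sigma N\, |x-y|^{1/2}$; and \eqref{Vn2} gives $\tilde J_2^n\le C K_\sigma N\, |t-s|^{1/4}$. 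Summing these bounds and taking the supremum over $n$ yields the claim with a constant that depends only on $K_\sigma$ and $N$, and in particular is independent of $x,y\in\mathbb{R}$ (note that, unlike the $U_n$ estimate in Lemma \ref{UnVn}, we do not need to restrict to $[-k,k]$ here, because the bounded-by-$K_\sigma$ factor obviates any exponential weight argument).

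I do not anticipate a genuine obstacle: the only point worth checking is the uniformity in $n$, but this is automatic since the bound on $h_n-h$ is $2N$ regardless of $n$, and $\sigma(Y^h)$ does not depend on $n$ at all. The proof is thus a cosmetic rewriting of the $V_n$ part of Lemma \ref{UnVn}, which justifies the author's remark that the proof is omitted.
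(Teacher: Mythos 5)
Your proposal is correct and follows exactly the route the paper intends: the authors omit the proof precisely because it is the $V_n$ argument of Lemma \ref{UnVn} with $(\sigma_n(Y^n),h)$ replaced by $(\sigma(Y^h),h_n-h)$, using $|\sigma(Y^h)|\le K_\sigma$ from (\textbf{H1(c)}), $|h_n-h|_{\mathcal{H}}\le 2N$, and the heat kernel bounds \eqref{3.21} and \eqref{5oflem3.3}. Your observation that the bound is uniform in $x,y\in\mathbb{R}$ (no restriction to $[-k,k]$ is needed, since only the $U_n$ estimate requires the exponential-weight localization) is also accurate.
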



\vskip 0.5cm

\begin{proof}[Proof of Proposition \ref{C1}]
	Recall $T^*$ and $c_1,c_4$ introduced in (\ref{T}) and \textbf{Hypothesis 1}, respectively. For any fixed $T>0$, (\ref{eq T}) implies that there exists a positive constant $\lambda_{T}$ such that $T \leq T^{*}\left( c_{1}\vee c_{4},\lambda\right)$ for all $\lambda \in(0, \lambda_{T}]$.
Write $\beta\left(c_1\vee c_{4},\lambda\right)$ and  as $\beta$ for simplicity. To prove
Proposition \ref{C1}, it suffices to show that, for any $\lambda\in(0,\lambda_T]$,
	\begin{align*}
		\lim_{n\to\infty}\sup_{t\leq T,x\in\mathbb{R}} \left(|Y^{h_n}(t,x)-Y^h(t,x)|e^{-\lambda|x|e^{\beta t}}\right)=0.
	\end{align*}
	 Fix $\lambda\in(0,\lambda_T]$. Let $0<\delta \leq e^{-1}$,
	\begin{align*}
		t^{\delta}_n :=\inf \left\{t>0: \sup _{x \in \mathbb{R}}\left(|Y^{h_n}(t, x)-Y^h(t, x)| e^{-\lambda|x| e^{\beta t}}\right) \geq \delta\right\}\wedge T,
	\end{align*}
	and
	\begin{align*}
	g_n(r):=\sup_{t\leq r\wedge t^{\delta}_n,x\in\mathbb{R}} \left(|Y^{h_n}(t,x)-Y^h(t,x)|e^{-\lambda|x|e^{\beta t}}\right).
	\end{align*}
	It follows by \eqref{ske0} that
	\begin{align}\label{221105.2301}
g_n(r)\leq & \sup_{t \leq r\wedge t^{\delta}_n, x \in \mathbb{R}}\left\{\left|\int_{0}^{t} \int_{\mathbb{R}} p_{t-s}(x,y)\left[b(Y^{h_n}(s,y))-b(Y^h(s,y))\right]\mathrm{d}s\mathrm{d}y\right|e^{-\lambda |x|e^{\beta t}}\right\} \nonumber\\
		&+\sup_{t \leq r\wedge t^{\delta}_n, x \in \mathbb{R}}\left\{\left|\int_{0}^{t}\int_{\mathbb{R}}p_{t-s}(x,y)\left[\sigma\left(Y^{h_n}(s,y)\right)h_n(s,y)-\sigma(Y^h(s,y))h(s,y)\right]\mathrm{d}s\mathrm{d}y\right|e^{-\lambda |x|e^{\beta t}}\right\}\nonumber\\
		=: & R + Q.
	\end{align}
The term $Q$ is bounded by
	\begin{align}\label{221105,2303}
		Q \leq &\sup_{t \leq r\wedge t^{\delta}_n,x\in\mathbb{R}}\left\{\left| \int_{0}^{t}\int_{\mathbb{R}}p_{t-s}(x,y)\sigma(Y^h(s,y))\left[ h_n(s,y)-h(s,y) \right]\mathrm{d}s\mathrm{d}y\right| \,e^{-\lambda|x|e^{\beta t}}\right\} \nonumber\\
		&+ \sup_{t\leq r\wedge t^{\delta}_n, x\in\mathbb{R}}\left\{\left| \int_{0}^{t}\int_{\mathbb{R}}p_{t-s}(x,y)\left[ \sigma(Y^{h_n}(s,y))- \sigma(Y^h(s,y)) \right]h_n(s,y)\mathrm{d}s \mathrm{d}y\right|\,e^{-\lambda|x|e^{\beta t}}\right\} \nonumber\\
		=:& Q_1+Q_2.
	\end{align}
We estimate the term $Q_1$ as follows,
\begin{align}\label{221105.2216}
	Q_1\leq & \sup_{t \leq T, |x|>M} \left\{\left|\int_{0}^{t}\int_{\mathbb{R}}p_{t-s}(x,y)\sigma(Y^h(s,y))\left(h_n(s,y)-h(s,y)\right)\mathrm{d}s\mathrm{d}y\right|e^{-\lambda|x|e^{\beta t} }\right\} \nonumber\\
	&+\sup_{t \leq T, |x|\leq M}\left\{ \left|\int_{0}^{t}\int_{\mathbb{R}}p_{t-s}(x,y)\sigma(Y^h(s,y))\left(h_n(s,y)-h(s,y)\right)\mathrm{d}s\mathrm{d}y\right|e^{-\lambda|x|e^{\beta t} }\right\} \nonumber\\
	\leq &C_{K_{\sigma},N,T} e^{-\lambda M}+\sup_{t \leq T, |x|\leq M} \left|\int_{0}^{t}\int_{\mathbb{R}}p_{t-s}(x,y)\sigma(Y^h(s,y))\left(h_n(s,y)-h(s,y)\right)\mathrm{d}s\mathrm{d}y\right|,
\end{align}
where the boundedness of $\sigma$ and the similar argument in the proof of \eqref{fn3} have been used. According to Lemma \ref{U^nV^n}, for any $M>0$,
\begin{align*}
	F_n(t,x):= \int_{0}^{t}\int_{\mathbb{R}}p_{t-s}(x,y)\sigma(Y^h(s,y))\left(h_n(s,y)-h(s,y)\right)\mathrm{d}s\mathrm{d}y
\end{align*}
is equicontinuous on $[0,T]\times[-M,M]$. That is, for any $\varepsilon>0$, there exists $\rho>0$ such that for any $t,t'\in[0,T], x,x'\in [-M,M]$ with $|t-t'|<\rho, |x-x'|<\rho$,
\begin{align*}
	\sup_{n\in\mathbb{N}}\left|F_n(t,x)-F_n(t',x')\right|<\varepsilon,
\end{align*}
which implies that
\begin{align}\label{221105.2217}
	\sup_{t \leq T, |x|\leq M} | F_n(t,x) | \leq \!\!\!\!\!\!\!\!\sum_{\substack{i=1,2,\cdots,\lfloor T\rho\rfloor\\j=-\lfloor M\rho \rfloor, -\lfloor M\rho \rfloor +1\cdots ,\lfloor M\rho \rfloor} } \!\!\!\!\left|\int_{0}^{i\rho}\int_{\mathbb{R}}p_{i\rho-s}(j\rho,y)\sigma(Y^h(s,y))(h_n(s,y)-h(s,y))\mathrm{d}s\mathrm{d}y\right|+\varepsilon.
\end{align}
Combining (\ref{221105.2216}) and (\ref{221105.2217}) together yields
\begin{align*}
	Q_1\leq &C_{K_\sigma,N,T} e^{-\lambda M}+\varepsilon \nonumber\\
	&+\sum_{\substack{i=1,2,\cdots,\lfloor T\rho\rfloor\\j= -\lfloor M\rho \rfloor, -\lfloor M\rho \rfloor +1\cdots ,\lfloor M\rho \rfloor} } \left|\int_{0}^{i\rho}\int_{\mathbb{R}}p_{i\rho-s}(j\rho,y)\sigma(Y^h(s,y))(h_n(s,y)-h(s,y))\mathrm{d}s\mathrm{d}y\right|.
\end{align*}
Letting $n\to\infty$ and then $M\to +\infty$, we have
\begin{align*}
	\limsup_{n\to \infty}Q_1\leq \varepsilon,
\end{align*}
here $\lim_{n\to\infty}h_n= h$ in $\mathcal{H}_N$ has been used. Then the arbitrariness of $\varepsilon$ implies that
\begin{align}\label{C11}
	\lim_{n\to \infty}Q_1=0.
\end{align}
For the term $Q_2$, by Proposition \ref{moment} with $p=1$ and the Lipschitz continuity of $\sigma$ (see (\textbf{H1(c)})), for any $\eta>0$,
\begin{align}\label{221105.2304}
	Q_2\leq & \eta L_\sigma g_n(r)+C_{\eta,c_4,\lambda,L_\sigma,N,T}\int_{0}^{r} \gamma_n(s)g_n(s)\mathrm{d}s,
\end{align}
where
\begin{align}\label{221105.2308}
	\gamma_n(s): = \int_{\mathbb{R}}\left|h_n(s,y)\right|^2\mathrm{d}y
\end{align}
is integrable on $[0,T]$.
The term $R$ can be estimated similarly as \eqref{uniq1}-\eqref{221105.1947}:
\begin{align}\label{221105.2305}
	R\leq & c_3 \sup_{t \leq t^{\delta}_n, x\in\mathbb{R}} \left\{\int_{0}^{t}\int_{\mathbb{R}}p_{t-s}(x,y)|Y^{h_n}(s,y)-Y^h(s,y)|\log_+\frac{1}{|Y^{h_n}(s,y)-Y^h(s,y)|}\mathrm{d}s\mathrm{d}y e^{-\lambda |x|e^{\beta t}}\right\} \nonumber\\
	&+c_4 \sup_{t \leq t^{\delta}_n, x\in\mathbb{R}} \left\{\int_{0}^{t}\int_{\mathbb{R}}p_{t-s}(x,y)\log_+\left(|Y^{h_n}(s,y)|\vee|Y^h(s,y)|\right)|Y^{h_n}(s,y)-Y^h(s,y)|\mathrm{d}s\mathrm{d}y e^{-\lambda |x|e^{\beta t}}\right\} \nonumber\\
	&+c_5\sup_{t \leq t^{\delta}_n, x\in\mathbb{R}} \left\{\int_{0}^{t}\int_{\mathbb{R}}p_{t-s}(x,y)|Y^{h_n}(s,y)-Y^h(s,y)|\mathrm{d}s\mathrm{d}y e^{-\lambda |x|e^{\beta t}}\right\} \nonumber\\
	\leq & 2c_3 e^{\frac{\lambda^2}{4\beta }e^{2\beta r-1}} \int_{0}^{r} g_n(s)\log_+\frac{1}{g_n(s)}\mathrm{d}s+2c_5e^{\frac{\lambda^2}{4\beta}e^{2\beta r-1}} \int_{0}^{r} g_n(s)\mathrm{d}s \nonumber\\
	&+\frac{1}{2}g_n(r)+C_{c_1,c_2,c_{4},\lambda,u_0,K_\sigma,N,T}\int_{0}^{r} g_n(s)\mathrm{d}s ,
\end{align}
here we have used Lemma \ref{u^nbdd}, i.e., $\{Y^{h_n}\}$ is uniformly bounded. Combining (\ref{221105.2301}), (\ref{221105,2303}), (\ref{221105.2304}), and (\ref{221105.2305}) together gives
\begin{align}
	g_n(r)\leq& Q_1+\eta L_\sigma g_n(r)+\frac{1}{2}g_n(r)+C_{\eta,c_4,\lambda,L_{\sigma},N,T}\int_{0}^{r}\gamma_n(s)g_n(s)\mathrm{d}s\nonumber\\
	&+C_{c_1,c_2,c_4,c_5,\lambda,u_0,K_\sigma,N,T}\int_{0}^{r}g_n(s)\mathrm{d}s+C_{c_3,c_4,\lambda,T}\int_{0}^{r} g_n(s)\log_+\frac{1}{g_n(s)}\mathrm{d}s.
\end{align}
In view of (\ref{221105.2308}) and $h_n\in \mathcal{H}_N$, we have $\int_0^T \gamma_n(s)\mathrm{d}s \leq N$. Thus, taking $\eta < \frac{1}{2L_{\sigma}}$ and by Lemma \ref{Gronwall2} and \eqref{C11} we obtain that $g_n(T)\to 0$ as $n\to\infty$.
This implies that $t^{\delta}_n\geq T$ for large enough $n$, otherwise it would contradict the definition of $t^{\delta}_n$. Therefore, we deduce that
\begin{align*}
\sup_{t\leq T,x\in\mathbb{R}} \left(|Y^{h_n}(t,x)-Y^h(t,x)|e^{-\lambda|x|e^{\beta t}}\right)\to 0 \quad \text{ as }n\to\infty,
\end{align*}
and the proof of Proposition \ref{C1} is complete.
\end{proof}

\vskip 0.5cm

\subsection{Verification of (\textbf{C2})}\label{221203.2252}

The claim (\textbf{C2}) is verified in the following proposition.



\begin{proposition}\label{C2}
Under {\bf Assumption Case 1}, for any $N>0$, $\{h_\epsilon,\epsilon>0\}\subset \mathcal{X}_N$ and $\delta>0$,
\begin{align*}
	\lim_{\epsilon\to 0} \mathbb{P} \left(   \tilde{d}\left(X^{\epsilon,h_\epsilon},Y^{h_\epsilon}\right)>\delta  \right)=0,
\end{align*}
where $Y^{h_\epsilon}=\mathcal{G}^{0}(\text{Int}(h_{\epsilon}))$ is the unique solution to (\ref{ske0}) with $h$ replaced by $h_{\epsilon}$, and $X^{\epsilon,h_\epsilon}$ is the unique solution to (\ref{eqnhe}).

\end{proposition}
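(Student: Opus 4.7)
Let $Z^\epsilon:=X^{\epsilon,h_\epsilon}-Y^{h_\epsilon}$. Subtracting \eqref{ske0} (with $h$ replaced by $h_\epsilon$) from \eqref{eqnhe} displays $Z^\epsilon$ as a sum of a drift difference $\int p_{t-s}[b(X^{\epsilon,h_\epsilon})-b(Y^{h_\epsilon})]$, a diffusion-control difference $\int p_{t-s}[\sigma(X^{\epsilon,h_\epsilon})-\sigma(Y^{h_\epsilon})]h_\epsilon$, and the stochastic convolution
$$M^\epsilon(t,x):=\sqrt{\epsilon}\int_0^t\!\int_{\mathbb R} p_{t-s}(x,y)\sigma\bigl(X^{\epsilon,h_\epsilon}(s,y)\bigr)\,W(\mathrm{d}s,\mathrm{d}y).$$
Because $\tilde d$-convergence is equivalent to convergence in the $e^{-\lambda_0|x|}$-weighted uniform norm for every $\lambda_0>0$, and since $\lambda e^{\beta t}\le \lambda_0$ on $[0,T]$ whenever $\lambda\le \lambda_0 e^{-\beta T}$, it suffices to prove that for sufficiently small $\lambda$ (with $\lambda\in(0,\lambda_T]$ and $\beta=\beta(c_1\vee c_4,\lambda)$) one has $\sup_{t\le T,x\in\mathbb R}|Z^\epsilon(t,x)|e^{-\lambda|x|e^{\beta t}}\to 0$ in probability.

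The crucial preliminary is a first-moment bound for $M^\epsilon$ in the weighted supremum norm,
$$\mathbb E\Bigl[\sup_{t\le T,\,x\in\mathbb R}|M^\epsilon(t,x)|\,e^{-\lambda|x|}\Bigr]\le C_{T,\lambda,K_\sigma}\sqrt{\epsilon}.$$
Since $\sigma$ is bounded by {\rm(\textbf{H1(c)})}, this should follow from the one-order moment estimate of stochastic convolutions advertised in the introduction, combined with Burkholder--Davis--Gundy and heat-kernel bounds of the kind used in Section \ref{221107.1422}. Markov's inequality then yields $R^\epsilon:=\sup_{t,x}|M^\epsilon(t,x)|e^{-\lambda|x|}\to 0$ in probability as $\epsilon\to 0$.

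Next I would localize via two stopping times. With $\delta'\in(0,e^{-1}]$ and $K>0$ set
$$\tau^\epsilon_K:=\inf\bigl\{t:\sup_x|X^{\epsilon,h_\epsilon}(t,x)|e^{-\lambda|x|e^{\beta t}}\ge K\bigr\}\wedge T,\quad t^{\delta'}_\epsilon:=\inf\bigl\{t:\sup_x|Z^\epsilon(t,x)|e^{-\lambda|x|e^{\beta t}}\ge \delta'\bigr\}\wedge T,$$
and $g_\epsilon(r):=\sup_{t\le r\wedge\tau^\epsilon_K\wedge t^{\delta'}_\epsilon,\,x}|Z^\epsilon(t,x)|e^{-\lambda|x|e^{\beta t}}$. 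The tightness $\lim_{K\to\infty}\sup_{\epsilon\in(0,1]}\mathbb P(\tau^\epsilon_K<T)=0$ is obtained by rerunning the proof of Lemma \ref{u_nbdd} on \eqref{eqnhe} with {\rm(\textbf{H1(a)})} for the drift, H\"older's inequality plus boundedness of $\sigma$ and $|h_\epsilon|_{\mathcal H}\le N$ for the control term, the moment bound above to absorb $M^\epsilon$, and Lemma \ref{Gronwall1}; Lemma \ref{u^nbdd} supplies the companion bound for $Y^{h_\epsilon}$. On $\{r\le\tau^\epsilon_K\wedge t^{\delta'}_\epsilon\}$ one then mimics the estimates of Propositions \ref{221105.1303} and \ref{C1}: {\rm(\textbf{H1(b)})} decomposes the drift difference into a log-Lipschitz piece bounded by $c_3\int g_\epsilon\log_+(1/g_\epsilon)$, a logarithmic-growth piece absorbed as $\tfrac12 g_\epsilon(r)$ via $T\le T^*(c_4,\lambda)$, and a linear piece $c_5\int g_\epsilon$; Proposition \ref{moment} with $p=1$ together with {\rm(\textbf{H1(c)})} handles the diffusion-control difference. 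The resulting pathwise inequality
$$g_\epsilon(r)\le R^\epsilon+C_1\!\int_0^r\! g_\epsilon(s)\,\mathrm{d}s+C_2\!\int_0^r\! g_\epsilon(s)\log_+\tfrac{1}{g_\epsilon(s)}\,\mathrm{d}s+C_3\!\int_0^r\!\gamma_\epsilon(s)g_\epsilon(s)\,\mathrm{d}s,\quad \gamma_\epsilon(s):=\!\int_{\mathbb R}|h_\epsilon(s,y)|^2\mathrm{d}y,$$
is then closed by the nonlinear Gronwall inequality Lemma \ref{Gronwall2}, yielding $g_\epsilon(T\wedge\tau^\epsilon_K\wedge t^{\delta'}_\epsilon)\le \Phi_K(R^\epsilon)$ for some continuous $\Phi_K$ with $\Phi_K(0)=0$. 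Choosing $K$ large and using $R^\epsilon\to 0$ in probability forces $t^{\delta'}_\epsilon=T$ with probability tending to one, whence $g_\epsilon(T)\to 0$ in probability.

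The main obstacle is the first-moment estimate of $M^\epsilon$ in the weighted supremum norm with the sharp $\sqrt\epsilon$ rate. Because {\rm(\textbf{H1(b)})} is only log-Lipschitz, one cannot trade for high moments of $Z^\epsilon$ (any power would interact badly with the $x\log_+(1/x)$ nonlinearity that drives Lemma \ref{Gronwall2}), so the bound on $M^\epsilon$ must be realised pathwise in the weighted sup norm over the whole real line; this delicate step, together with the accompanying uniform-in-$\epsilon$ tightness of $X^{\epsilon,h_\epsilon}$ in the specially designed norm despite the superlinear drift, is the technical heart of the argument.
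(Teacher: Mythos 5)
Your argument follows essentially the same route as the paper's proof: the same reduction to the weighted norm $e^{-\lambda|x|e^{\beta t}}$ for small $\lambda$, the same localization by stopping times (the paper stops on both $X^{\epsilon,h_\epsilon}$ and $Y^{h_\epsilon}$, but your variant is equivalent since Lemma \ref{u^nbdd} bounds $Y^{h_\epsilon}$ pathwise), the same three-term decomposition with Proposition \ref{moment} ($p=1$) for the control term, and Lemma \ref{Gronwall2} to close the pathwise inequality before de-localizing. The first-moment bound on the stochastic convolution that you flag as the main remaining obstacle is exactly what the paper obtains by citing Lemma 4.1 and Proposition 4.2 of \cite{SZ} together with the boundedness of $\sigma$, which give $\sup_{0<\epsilon<1}\mathbb{E}\left[(V^{\epsilon}(T))^{p}\right]\le C$ for every $p>0$ and thereby also control the fractional power of $\sqrt{\epsilon}\,V^{\epsilon}(T)$ produced by Lemma \ref{Gronwall2}, so no new estimate is needed there.
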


Before proving Proposition \ref{C2}, we first give  certain uniform estimates on $X^{\epsilon,h_\epsilon}$ and $Y^{h_\epsilon}$. Recall $\beta$ introduced in (\ref{eq beta}).
 In the following, for $\lambda>0$, we write $\beta(c_1\vee c_4,\lambda)$ as $\beta$, where $c_1$ and $c_4$ are the constants appearing in \textbf{Hypothesis 1}.

\begin{lemma}\label{U}
For any $T, \lambda>0$, there exists a constant $C_{c_1,c_2,\lambda,\beta,u_0,K_\sigma,N,T}$ such that
	\begin{align}
\label{221106.2057}		\sup_{0<\epsilon<1}\mathbb{E}\sup_{t\leq T,x\in\mathbb{R}} \left(\left|X^{\epsilon,h_\epsilon}(t,x)\right| e^{-\lambda|x|e^{\beta t}}\right) \leq &\ C_{c_1,c_2,\lambda,\beta,u_0,K_\sigma,N,T} , \\
\label{221106.2049}		\sup_{0<\epsilon<1}\mathbb{E}\sup_{t\leq T,x\in\mathbb{R}} \left(\left|Y^{h_\epsilon}(t,x)\right| e^{-\lambda|x|e^{\beta t}}\right) \leq &\  C_{c_1,c_2,\lambda,\beta,u_0,K_\sigma,N,T}.
	\end{align}
\end{lemma}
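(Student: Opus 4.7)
My plan is to prove the two estimates in turn. For (\ref{221106.2049}), since $h_\epsilon \in \mathcal{X}_N$ gives $|h_\epsilon(\omega)|_{\mathcal{H}} \leq N$ almost surely and $Y^{h_\epsilon}(\omega) = \mathcal{G}^0(\mathrm{Int}(h_\epsilon(\omega)))$ is the pathwise skeleton solution (Proposition \ref{Propo well posed SE 02}), I will pass the uniform-in-$n$ bound of Lemma \ref{u_nbdd} through the limiting argument of Proposition \ref{221105.1302} to obtain a pathwise bound whose constant depends only on $N$ and not on $\omega$. Under \textbf{Assumption Case 1} the drift obeys (\textbf{H1(a)}) globally, so the same scheme applied to $b$ directly (rather than to the truncation $b_n$) replaces $L_b$ by $c_2$ and yields, a.s.,
\[
\sup_{t \leq T,\, x \in \mathbb{R}} \bigl(|Y^{h_\epsilon}(t,x)| e^{-\lambda|x| e^{\beta t}}\bigr) \leq C_{c_1,c_2,\lambda,\beta,u_0,K_\sigma,N,T}.
\]
Taking expectation yields (\ref{221106.2049}).

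For (\ref{221106.2057}), set
\[
f_\epsilon(s) := \mathbb{E}\sup_{t \leq s,\, x \in \mathbb{R}} \bigl(|X^{\epsilon,h_\epsilon}(t,x)| e^{-\lambda|x| e^{\beta t}}\bigr)
\]
and split \eqref{eqnhe} into its four pieces. I will run the computations of Lemma \ref{u_nbdd} inside $\mathbb{E}$: the heat-semigroup term is handled as in \eqref{fn1}; the drift term, using (\textbf{H1(a)}) and the weighted heat-kernel estimates \eqref{3.20}, \eqref{3.22}, reproduces \eqref{fn2}--\eqref{fn23} and yields the contributions $\int_0^T f_\epsilon(s)\,\mathrm{d}s$ and $\int_0^T f_\epsilon(s)\log_+ f_\epsilon(s)\,\mathrm{d}s$, together with a $\tfrac12 f_\epsilon(T)$ that is absorbed by restricting $\lambda \in (0,\lambda_T]$ so that $T \leq T^*(c_1\vee c_4,\lambda)$; the controlled term is bounded by $\sqrt{2}\pi^{-1/4} K_\sigma N T^{1/4}$ exactly as in \eqref{fn3}, since $|h_\epsilon(\omega)|_{\mathcal{H}}\le N$ holds a.s. Putting these together one gets
\[
f_\epsilon(T) \leq C_1 + I_\epsilon + C_2\!\int_0^T f_\epsilon(s)\,\mathrm{d}s + C_3\!\int_0^T f_\epsilon(s)\log_+ f_\epsilon(s)\,\mathrm{d}s,
\]
where $C_1,C_2,C_3$ depend only on $c_1,c_2,\lambda,\beta,u_0,K_\sigma,N,T$, and
\[
I_\epsilon := \mathbb{E}\sup_{t \leq T,\, x \in \mathbb{R}} \Bigl(\Bigl|\sqrt{\epsilon}\!\int_0^t\!\!\int_{\mathbb{R}} p_{t-s}(x,y)\sigma(X^{\epsilon,h_\epsilon}(s,y))\,W(\mathrm{d}s,\mathrm{d}y)\Bigr|\, e^{-\lambda|x| e^{\beta t}}\Bigr).
\]

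The main obstacle is this first-order weighted moment estimate $I_\epsilon$ on the stochastic convolution, which must be dominated by a constant independent of $\epsilon \in (0,1)$ and of $X^{\epsilon,h_\epsilon}$. Here (\textbf{H1(c)}) is essential: since $|\sigma|\leq K_\sigma$, I plan to employ the Da Prato--Kwapien--Zabczyk factorisation at some $\alpha \in (0,\tfrac12)$, rewriting the convolution as a deterministic kernel applied to
\[
Z^\epsilon(s,y) := \int_0^s\!\!\int_{\mathbb{R}} (s-r)^{-\alpha} p_{s-r}(y,z)\sigma(X^{\epsilon,h_\epsilon}(r,z))\,W(\mathrm{d}r,\mathrm{d}z);
\]
Burkholder's inequality at an exponent $p$ with $p\alpha > 1$, combined with $|\sigma|\le K_\sigma$, provides a uniform-in-$(s,y)$ bound on $\mathbb{E}|Z^\epsilon(s,y)|^p e^{-p\lambda|y|e^{\beta s}}$ by $C K_\sigma^p$, and H\"older in $(s,y)$ against the heat kernel (using \eqref{3.20}) recovers the weighted sup with $I_\epsilon \leq C_{K_\sigma,\lambda,\beta,T}\sqrt{\epsilon}$. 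Substituting this back into the integral inequality for $f_\epsilon$ and applying the nonlinear Gronwall inequality (Lemma \ref{Gronwall1}) that closed Lemma \ref{u_nbdd} delivers (\ref{221106.2057}) uniformly in $\epsilon \in (0,1)$.
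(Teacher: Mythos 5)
Your treatment of (\ref{221106.2049}) is fine and coincides with the paper's: since $|h_\epsilon(\omega)|_{\mathcal{H}}\le N$ a.s., the deterministic bound of Lemma \ref{u^nbdd}, which is uniform over $\mathcal{H}_N$, applies pathwise and taking the expectation is trivial.

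For (\ref{221106.2057}) there is a genuine gap at the Gronwall step. You define $f_\epsilon(s)=\mathbb{E}\sup_{t\le s,x\in\mathbb{R}}(\cdots)$ and assert that the drift term contributes $\int_0^T f_\epsilon(s)\log_+ f_\epsilon(s)\,\mathrm{d}s$. The pathwise computation of Lemma \ref{u_nbdd} actually produces $\int_0^T U^{\epsilon}(s)\log_+ U^{\epsilon}(s)\,\mathrm{d}s$ with $U^{\epsilon}(s)=\sup_{t\le s,x\in\mathbb{R}}(\cdots)$, so after taking expectations you are left with $\int_0^T\mathbb{E}\bigl[U^{\epsilon}(s)\log_+ U^{\epsilon}(s)\bigr]\mathrm{d}s$. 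Since $z\mapsto z\log_+ z$ is convex, Jensen's inequality gives $\mathbb{E}[U\log_+ U]\ge (\mathbb{E}U)\log_+(\mathbb{E}U)$, i.e.\ the inequality runs in the wrong direction, and the integral inequality for $f_\epsilon$ that you feed into Lemma \ref{Gronwall1} does not follow from the pathwise one. The paper avoids this by applying Lemma \ref{Gronwall1} \emph{pathwise} to $U^{\epsilon}$, which yields $U^{\epsilon}(T)\le\{C+\sqrt{\epsilon}\,V^{\epsilon}(T)\}^{q}$ with an exponent $q=C_{c_1,\lambda,\beta,T}\ge 1$ produced by the Gronwall lemma, and only then takes expectations; this requires a $q$-th moment bound $\mathbb{E}[(V^{\epsilon}(T))^{q}]\le C$ on the weighted supremum of the stochastic convolution (supplied by Lemma 4.1 and Proposition 4.2 of \cite{SZ}), not merely the first-moment bound $I_\epsilon$ that you establish. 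Your factorisation/Burkholder computation would in fact deliver those higher moments, so the argument is reparable, but as written the order of operations (expectation before Gronwall, together with only a first-moment estimate on the noise term) does not close.
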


\begin{proof}
It is not difficult that $\{h_\epsilon,\epsilon>0\}\subset \mathcal{X}_N$ and (\ref{221106.2100}) imply (\ref{221106.2049}). In the sequel, we only prove (\ref{221106.2057}).


Recall $T^*$ is introduced in (\ref{T}). For any fixed $T>0$, (\ref{eq T}) implies that there exists a positive constant $\lambda_{T}$ such that $T \leq T^{*}\left( c_{1}\vee c_4,\lambda\right)$ for all $\lambda \in(0, \lambda_{T}]$.  We will prove (\ref{221106.2057}) for any $\lambda\in(0,\lambda_T]$, which implies (\ref{221106.2057}) for arbitrary $\lambda>0$.

Now fix $\lambda\in(0,\lambda_T]$. Set
	\begin{align*}
		U^{\epsilon}(r):=\sup_{t\leq r,x\in\mathbb{R}} \left(\left|X^{\epsilon,h_\epsilon}(t,x)\right| e^{-\lambda|x|e^{\beta t}}\right), \quad r\leq T.
	\end{align*}
It follows from (\ref{eqnhe}) that
	\begin{align}\label{221106.1924}
	U^{\epsilon}(r)\leq& \sup_{t\leq r,x\in\mathbb{R}} \left( \left|\int_{\mathbb{R}} p_t(x,y)u_0(y)\mathrm{d}y \right| e^{-\lambda|x|e^{\beta t}}\right) \nonumber\\
	&+\sup_{t\leq r,x\in\mathbb{R}}\left( \left| \int_{0}^{t}\int_{\mathbb{R}}p_{t-s}(x,y)b(X^{\epsilon,h_\epsilon}(s,y))\mathrm{d}s\mathrm{d}y \right| e^{-\lambda|x|e^{\beta t}}\right) \nonumber\\
	&+\sup_{t\leq r,x\in\mathbb{R}} \left( \left| \int_{0}^{t}\int_{\mathbb{R}} p_{t-s}(x,y)\sigma(X^{\epsilon,h_\epsilon}(s,y))h_{\epsilon}(s,y)\mathrm{d}s\mathrm{d}y \right| e^{-\lambda|x|e^{\beta t}}\right) \nonumber\\
	&+\sqrt{\epsilon}\sup_{t\leq r,x\in\mathbb{R}} \left(\left| \int_{0}^{t}\int_{\mathbb{R}} p_{t-s}(x,y)\sigma(X^{\epsilon,h_\epsilon}(s,y))W(\mathrm{d}s,\mathrm{d}y)\mathrm{d}s\mathrm{d}y \right| e^{-\lambda|x|e^{\beta t}}\right).
\end{align}
By Assumption (\textbf{H1(b)}) and using the similar arguments as proving (\ref{fn2})-(\ref{fn23}), we have
\begin{align}
	&\sup_{t\leq r,x\in\mathbb{R}}\left( \left| \int_{0}^{t}\int_{\mathbb{R}}p_{t-s}(x,y)b(X^{\epsilon,h_\epsilon}(s,y))\mathrm{d}s\mathrm{d}y \right|  e^{-\lambda|x|e^{\beta t}}\right) \nonumber\\
	\leq &\sup_{t\leq r,x\in\mathbb{R}}\left\{\int_{0}^{t}\int_{\mathbb{R}}p_{t-s}(x,y)\left(c_1|X^{\epsilon,h_\epsilon}(s,y)|\log_+|X^{\epsilon,h_\epsilon}(s,y)|+c_2\right)\mathrm{d}s\mathrm{d}ye^{-\lambda|x|e^{\beta t}}\right\} \nonumber \\
	\leq&c_2T + c_1\sup_{t\leq r,x\in\mathbb{R}} \bigg\{ \int_0^t \sup_{y\in\mathbb{R}} \left[ |X^{\epsilon,h_\epsilon}(s,y)|e^{-\lambda |y|e^{\beta s}}\log_+\left(|X^{\epsilon,h_\epsilon}(s,y)|e^{-\lambda |y|e^{\beta s}}\right)\right] \nonumber\\
	& ~~~~~~~~~~~~~~~~~~~~~~ \times \int_{\mathbb{R}}p_{t-s}(x,y)e^{\lambda|y|e^{\beta s}}\mathrm{d}y \mathrm{d}s \cdot e^{-\lambda|x|e^{\beta t}} \bigg\} \nonumber\\
	& + c_1\sup_{t\leq r,x\in\mathbb{R}} \left\{\int_0^t \sup_{y\in\mathbb{R}} \left( |X^{\epsilon,h_\epsilon}(s,y)|e^{-\lambda |y|e^{\beta s}}\right)  \int_{\mathbb{R}}p_{t-s}(x,y)e^{\lambda|y|e^{\beta s}}\lambda|y|e^{\beta s}\mathrm{d}y \mathrm{d}s \cdot e^{-\lambda|x|e^{\beta t}} \right\} \nonumber\\
	\leq & c_2T+2c_1e^{\frac{\lambda^2}{4\beta}e^{2\beta T-1}} 	\int_{0}^{r} U^{\epsilon}(s)\log_+U^{\epsilon}(s)\mathrm{d}s+\frac{c_1}{\beta} e^{\frac{\lambda^2}{4\beta}e^{2\beta T-1}} U^{\epsilon}(r)+C_{c_1,\lambda,\beta,T}\int_{0}^{r} U^{\epsilon}(s)\mathrm{d}s \nonumber\\
	\leq & c_2T+2c_1e^{\frac{\lambda^2}{4\beta}e^{2\beta T-1}} \int_{0}^{r} U^{\epsilon}(s)\log_+U^{\epsilon}(s)\mathrm{d}s + \frac{c_1}{2(c_1\vee c_4)} U^{\epsilon}(r) + C_{c_1,\lambda,\beta,T}\int_{0}^{r} U^{\epsilon}(s)\mathrm{d}s ,
\end{align}
here the last inequality has used the facts that for any $\lambda\in(0,\lambda_T]$, $T\leq T^*(c_1\vee c_4,\lambda)$,
\begin{align*}
	\frac{c_1\vee c_4}{\beta}e^{\frac{\lambda^2}{4\beta}e^{2\beta T-1}}\leq \frac{1}{2} \iff T\leq T^*(c_1\vee c_4,\lambda).
\end{align*}
Applying H\"older's inequality, the boundedness of $\sigma$, (\ref{3.21}), and $h_\epsilon\in\mathcal{H}_N$, $\mathbb{P}$-a.e. yield that
\begin{align}
	&\sup_{t\leq T,x\in\mathbb{R}} \left( \left| \int_{0}^{t}\int_{\mathbb{R}} p_{t-s}(x,y)\sigma(X^{\epsilon,h_\epsilon}(s,y))h_{\epsilon}(s,y)\mathrm{d}s\mathrm{d}y \right| e^{-\lambda|x|e^{\beta t}}\right) \nonumber\\
	\leq&K_\sigma N \sup_{t\leq T,x\in\mathbb{R}}\left(\int_{0}^{t} \int_{\mathbb{R}}p^2_{t-s}(x,y)\mathrm{d}s\mathrm{d}y\right)^{\frac{1}{2}} \nonumber\\
	\leq& \sqrt{2}\pi^{-\frac{1}{4}}K_\sigma N T^{\frac{1}{4}},\ \mathbb{P}\text{-a.e.}
\end{align}
The estimate of the first term on the right hand side of (\ref{221106.1924}) is the same as (\ref{fn1}). Therefore, for $r\leq T$,
\begin{align}
	U^{\epsilon}(r) & \leq \frac{1}{2}U^{\epsilon}(r)+c_2T+\sqrt{2}\pi^{-\frac{1}{4}}K_\sigma NT^{\frac{1}{4}}+2e^{\frac{\lambda^2 T}{2}}|u_0|_{(-\lambda)} \nonumber\\
	&+\sqrt{\epsilon}\sup_{t\leq T,x\in\mathbb{R}} \left( \left| \int_{0}^{t}\int_{\mathbb{R}} p_{t-s}(x,y)\sigma(X^{\epsilon,h_\epsilon}(s,y))W(\mathrm{d}s,\mathrm{d}y) \right| e^{-\lambda|x|e^{\beta t}}\right) \nonumber\\
	&+2c_1e^{\frac{\lambda^2}{4\beta}e^{2\beta T-1}} 		\int_{0}^{r} U^{\epsilon}(s)\log_+U^{\epsilon}(s)\mathrm{d}s+C_{c_1,\lambda,\beta,T}\int_{0}^{r} U^{\epsilon}(s)\mathrm{d}s,\ \mathbb{P}\text{-a.e.}
\end{align}
It follows from Lemma \ref{Gronwall1} that
\begin{align}
	U^{\epsilon}(T)\leq &\bigg\{ C_{c_1,c_2,\lambda,\beta,u_0,K_{\sigma},N,T} \nonumber\\
	& +\sqrt{\epsilon}\sup_{t\leq T,x\in\mathbb{R}} \left(\left| \int_{0}^{t}\int_{\mathbb{R}} p_{t-s}(x,y)\sigma(X^{\epsilon,h_\epsilon}(s,y))W(\mathrm{d}s,\mathrm{d}y) \right| e^{-\lambda|x|e^{\beta t}}\right)\bigg\}^{C_{c_1,\lambda,\beta,T}},\ \mathbb{P}\text{-a.e.}
\end{align}
Here the constant $C_{c_1,\lambda,\beta,T}\ge1$. By the inequality $(a+b)^p\le2^{p-1}(a^p+b^p),\ p\ge 1$, we get
\begin{align}\label{U1}
	\mathbb{E}U^{\epsilon}(T)\leq& C_{c_1,c_2,\lambda,\beta,u_0,K_{\sigma},N,T} \nonumber\\
	&\times\left\{ 1+\epsilon^{\frac{C_{c_1,\lambda,\beta,T}}{2}}\mathbb{E}\left[\sup_{t\leq T,x\in\mathbb{R}} \left( \left| \int_{0}^{t}\int_{\mathbb{R}} p_{t-s}(x,y)\sigma(X^{\epsilon,h_\epsilon}(s,y))W(\mathrm{d}s,\mathrm{d}y) \right| e^{-\lambda|x|e^{\beta t}}\right)^{C_{c_1,\lambda,\beta,T}}\right]\right\}.
\end{align}
According to Lemma 4.1 and Proposition 4.2 in \cite{SZ}, for any $p>0$,
\begin{align}\label{U2}
\mathbb{E}\sup_{t\leq T, x\in\mathbb{R}}\left( \left| \int_{\mathbb{R}} p_{t-s}(x,y)\sigma(X^{\epsilon,h_\epsilon}(s,y))W(\mathrm{d}s,\mathrm{d}y) \right| e^{-\lambda|x|e^{\beta t}}\right)^p\leq C_{\lambda,\beta,K_\sigma,T,p}.
\end{align}
Combining (\ref{U1}) with (\ref{U2}) together, we obtain that for $0<\epsilon<1$,
\begin{align*}
	\mathbb{E}U^{\epsilon}(T)\leq C_{c_1,c_2,\lambda,\beta,u_0,K_\sigma,N,T},
\end{align*}
and the constant on the right hand side is independent of $\epsilon$. The proof of (\ref{221106.2057}) is complete.

The proof of Lemma \ref{U} is complete.
\end{proof}

\vskip 0.5cm

\begin{proof}[Proof of Proposition \ref{C2}]
(\ref{eq T}) implies that there exists a positive constant $\lambda_{T}$ such that $T \leq T^{*}\left( c_{1},\lambda\right)$ for all $\lambda \in(0, \lambda_{T}]$.
Note that for $\varphi,\varphi_n\in C_{tem},n\ge1$,
\begin{align}
	\tilde{d}(\varphi_n,\varphi)&=\sup_{t\leq T} \sum_{k=1}^{\infty} \min\left\{ 1,\sup_{x\in\mathbb{R}}\left(|\varphi_n(t,x)-\varphi(t,x)|e^{-\frac{1}{k}|x|} \right)\right\} \nonumber\\
	&\leq \sum_{k=1}^{K} \frac{1}{2^k} \sup_{t\leq T, x\in\mathbb{R}}\left(|\varphi_n(t,x)-\varphi(t,x)|e^{-\frac{1}{k}|x|}\right)+\sum_{k=K+1}^{\infty} \frac{1}{2^k}.
\end{align}
Since $\sum_{k=1}^{\infty} \frac{1}{2^k}<\infty$, we derive that
\begin{align*}
	\lim_{\epsilon\to 0} \mathbb{P} \left(\tilde{d}(X^{\epsilon,h_\epsilon},Y^{h_\epsilon})>\rho\right)=0
\end{align*}
for any $\rho>0$ is equivalent to
\begin{align*}
	\lim_{\epsilon\to 0} \mathbb{P}\left(\sup_{t \leq T, x \in \mathbb{R}}\left(\left|X^{\epsilon,h_\epsilon}(t,x)-Y^{h_\epsilon}(t,x)\right|e^{-\lambda|x|e^{\beta t}}\right)>\rho \right) =0
\end{align*}
for any $\lambda,\rho>0$, here and in the following we write $\beta(c_1\vee c_4,\lambda)$ as $\beta$. It is not difficult to see that the above statement
is equivalent to
\begin{align*}
	\lim_{\epsilon\to 0} \mathbb{P}\left(\sup_{t \leq T, x \in \mathbb{R}}\left(\left|X^{\epsilon,h_\epsilon}(t,x)-Y^{h_\epsilon}(t,x)\right|e^{-\lambda|x|e^{\beta t}}\right)>\rho \right) =0
\end{align*}
for any $\rho>0$ and $\lambda\in(0,\lambda_T]$. In the following, we will prove this claim.

Fix $\lambda\in(0,\lambda_T]$. Let $\delta\in(0,\frac{1}{e})$. Introduce stopping times
\begin{align*}
		\tau_{M,\epsilon}:=& \inf \left\{t>0: \sup _{x \in \mathbb{R}}\left(\left|X^{\epsilon,h_\epsilon}(t,x)\right|e^{-\lambda|x| e^{\beta t}}\right) \geq M\right\} \nonumber\\
		& \wedge \inf \left\{t>0: \sup _{x \in \mathbb{R}}\left(\left|Y^{h_\epsilon}(t,x)\right| e^{-\lambda|x| e^{\beta t}}\right) \geq M\right\}, \nonumber\\
		\tau^{\delta,\epsilon}:=& \inf \left\{t>0: \sup _{x \in \mathbb{R}}\left(\left|X^{\epsilon,h_\epsilon}(t,x)-Y^{h_\epsilon}(t,x)\right| e^{-\lambda|x| e^{\beta t}}\right) \geq \delta\right\}, \nonumber \\
		\tau_{M}^{\delta,\epsilon}:=& \tau_{M,\epsilon} \wedge \tau^{\delta,\epsilon}\wedge T.
\end{align*}
It follows from Chebyshev's inequality and Lemma \ref{U} that
\begin{align}\label{221106.1150}
	\lim_{M\rightarrow\infty}\sup_{0<\epsilon<1} \mathbb{P} (\tau_{M,\epsilon}<T) = 0.
\end{align}
Let
\begin{align*}
	Z^{\epsilon}(r):=\sup _{t \leq r \wedge \tau_{M}^{\delta,\epsilon}, x \in \mathbb{R}}\left(\left|X^{\epsilon,h_\epsilon}(t,x)-Y^{h_\epsilon}(t,x)\right| e^{-\lambda|x| e^{\beta t}}\right).
\end{align*}
Then
\begin{align}\label{Z}
	Z^{\epsilon}(r)\leq & \ \sup_{t \leq r \wedge \tau_{M}^{\delta,\epsilon}, x \in \mathbb{R}}\left\{\int_{0}^{t} \int_{\mathbb{R}} p_{t-s}(x, y)\left|b(X^{\epsilon,h_\epsilon}(s,y))-b(Y^{h_\epsilon}(s,y))\right| \mathrm{d} s \mathrm{d} y e^{-\lambda|x| e^{\beta t}}\right\} \nonumber\\
	&+ \sup_{t \leq r \wedge \tau_{M}^{\delta,\epsilon}, x \in \mathbb{R}}\left\{\left|\int_{0}^{t} \int_{\mathbb{R}} p_{t-s}(x, y)\left[\sigma(X^{\epsilon,h_\epsilon}(s,y))-\sigma(Y^{h_\epsilon}(s,y))\right]h_\epsilon(s,y)\mathrm{d} s\mathrm{d} y\right| e^{-\lambda|x| e^{\beta t}}\right\} \nonumber\\
	&+\sqrt{\epsilon} \sup_{t \leq r \wedge \tau_{M}^{\delta,\epsilon}, x \in \mathbb{R}}\left\{\left|\int_{0}^{t} \int_{\mathbb{R}}p_{t-s}(x,y)\sigma(X^{\epsilon,h_\epsilon}(s,y))W(\mathrm{d}s,\mathrm{d}y)\right|e^{-\lambda|x| e^{\beta t}}\right\} \nonumber\\
	=: & \ I + II + III .
\end{align}
It is easy to see that
\begin{align}
	III \leq \sqrt{\epsilon} V^{\epsilon}(T),
\end{align}
here
\begin{align*}
	V^{\epsilon}(T):=\sup_{t \leq T, x \in \mathbb{R}}\left\{\left|\int_{0}^{t} \int_{\mathbb{R}}p_{t-s}(x,y)\sigma(X^{\epsilon,h_\epsilon}(s,y))W(\mathrm{d}s,\mathrm{d}y)\right|e^{-\lambda|x| e^{\beta t}}\right\}.
\end{align*}

By Assumption (\textbf{H1(b)}) and arguments similar to that used to prove (\ref{uniq1})-(\ref{221105.1947}), we have
\begin{align}\label{Z11}
	& I \nonumber\\
\leq & c_{3}\sup _{t \leq r \wedge \tau_{M}^{\delta,\epsilon}, x \in \mathbb{R}}\left\{\int_{0}^{t} \int_{\mathbb{R}} p_{t-s}(x, y) |X^{\epsilon,h_\epsilon}(s,y)-Y^{h_\epsilon}(s,y)|\log_+\frac{1}{|X^{\epsilon,h_\epsilon}(s,y)-Y^{h_\epsilon}(s,y)|}\mathrm{d}s\mathrm{d}y\,e^{-\lambda|x| e^{\beta t}}\right\}\nonumber\\
	&+c_{4}\sup _{t \leq r \wedge \tau_{M}^{\delta,\epsilon}, x \in \mathbb{R}}\!\!\left\{\int_{0}^{t}\! \int_{\mathbb{R}}\! p_{t-s}(x, y)  \log_+ \!\left(|X^{\epsilon,h_\epsilon}(s,y)|\vee|Y^{h_\epsilon}(s,y)|\right)\!|X^{\epsilon,h_\epsilon}(s,y)-Y^{h_\epsilon}(s,y)| \mathrm{d}s\mathrm{d}y\,e^{-\lambda|x| e^{\beta t}}\!\right\} \nonumber\\
	&+ c_{5}\sup _{t \leq r \wedge \tau_{M}^{\delta,\epsilon}, x \in \mathbb{R}}\left\{\int_{0}^{t} \int_{\mathbb{R}} p_{t-s}(x, y) |X^{\epsilon,h_\epsilon}(s,y)-Y^{h_\epsilon}(s,y)|\mathrm{d}s\mathrm{d}y\,e^{-\lambda|x| e^{\beta t}}\right\} \nonumber\\
	\leq& 2c_{3} e^{\frac{\lambda^{2}}{4 \beta} e^{2 \beta T-1}} \int_{0}^{r}  Z^{\epsilon}(s) \log_{+} \frac{1}{Z^{\epsilon}(s)} \mathrm{d} s+2 c_{5} e^{\frac{\lambda^{2}}{4 \beta} e^{2 \beta T-1}} \int_{0}^{r} Z^{\epsilon}(s)\mathrm{d} s \nonumber\\
	&+ c_{4}\sup _{t \leq r \wedge \tau_{M}^{\delta,\epsilon}, x \in \mathbb{R}}\left\{\int_{0}^{t} \sup_{y \in \mathbb{R}}\left(|X^{\epsilon,h_\epsilon}(s,y)-Y^{h_\epsilon}(s,y)| e^{-\lambda|y| e^{\beta s}}\right) \int_{\mathbb{R}} p_{t-s}(x, y) e^{\lambda|y| e^{\beta s}} \lambda|y| e^{\beta s}  \mathrm{d}y\mathrm{d}s \,e^{-\lambda|x| e^{\beta t}}\right\}\nonumber \\
	&+c_{4} \log_{+}M\sup_{t \leq r \wedge \tau_{M}^{\delta,\epsilon}, x \in \mathbb{R}}\left\{\int_{0}^{t} \sup_{y\in\mathbb{R}} \left(|X^{\epsilon,h_\epsilon}(s,y)-Y^{h_\epsilon}(s,y)| e^{-\lambda|y| e^{\beta s}}\right)\int_{\mathbb{R}} p_{t-s}(x, y) e^{\lambda|y| e^{\beta s}} \mathrm{d}y\mathrm{d}s \,e^{-\lambda|x| e^{\beta t}}\right\} \nonumber\\
	\leq &  2c_{3} e^{\frac{\lambda^{2}}{4 \beta} e^{2 \beta T-1}} \int_{0}^{r}  Z^{\epsilon}(s) \log_{+} \frac{1}{Z^{\epsilon}(s)} \mathrm{d} s+2 c_{5} e^{\frac{\lambda^{2}}{4 \beta} e^{2 \beta T-1}} \int_{0}^{r} Z^{\epsilon}(s)\mathrm{d} s\nonumber\\
	&+\frac{c_{4}}{\beta} e^{\frac{\lambda ^{2}}{4 \beta} e^{2 \beta T-1}}Z^{\epsilon}(r)+C_{c_4,\lambda,\beta,M,T}\int_{0}^{r} Z^{\epsilon}(s) \mathrm{d}s \nonumber\\
	\leq & \frac{c_4}{2(c_1\vee c_4)} Z^{\epsilon}(r)+C_{c_1,c_4,c_5,\lambda,M,T}\int_{0}^{r} Z^{\epsilon}(s)\mathrm{d}s+C_{c_1,c_3,c_4,\lambda,T} \int_{0}^{r}  Z^{\epsilon}(s)\log_+\frac{1}{Z^{\epsilon}(s)}\mathrm{d}s.
\end{align}
The last inequality above follows from $\beta=\beta(c_{1}\vee c_{4},\lambda)$, for any $\lambda\in(0,\lambda_T]$, $T\leq T^*(c_1\vee c_4,\lambda)$, and
\begin{align*}
	\frac{c_1\vee c_4}{\beta}e^{\frac{\lambda^2}{4\beta}e^{2\beta T-1}}\leq \frac{1}{2} \iff T\leq T^*(c_1\vee c_4,\lambda).
\end{align*}
By Proposition \ref{moment} and the Lipschitz continuity of $\sigma$ (see (\textbf{H1(c)})), for any $\eta>0$,
\begin{align}\label{Z2}
	II \leq \eta L_\sigma Z^{\epsilon}(r)+C_{\eta,c_1,c_4,\lambda,L_{\sigma},N,T}\int_{0}^{r} \gamma_\epsilon(s)Z^{\epsilon}(s)\mathrm{d}s,
\end{align}
where $\gamma_\epsilon(s)=\int_{\mathbb{R}} |h_\epsilon(s,y)|^2\mathrm{d}y$ and we have used that $\int_0^T \gamma_{\epsilon}(s)\mathrm{d}s \leq N$, $\mathbb{P}$-a.e. due to $\gamma_\epsilon\in\mathcal{H}_N$, $\mathbb{P}$-a.e..
Combining \eqref{Z}-(\ref{Z2}) together leads to, for $r\leq T$,
\begin{align}\label{Zll}
	Z^{\epsilon}(r)\leq & \ \frac{1}{2} Z^{\epsilon}(r)+ C_{c_1,c_4,c_5,\lambda,M,T}\int_{0}^{r} Z^{\epsilon}(s)\mathrm{d}s+C_{c_1,c_3,c_4,\lambda,T} \int_{0}^{r}  Z^{\epsilon}(s)\log_+\frac{1}{Z^{\epsilon}(s)}\mathrm{d}s \nonumber\\
	&+\eta L_\sigma Z^{\epsilon}(r)+C_{\eta,c_1,c_4,\lambda,L_\sigma,N,T}\int_{0}^{r} \gamma_{\epsilon}(s)Z^{\epsilon}(s)\mathrm{d}s +\sqrt{\epsilon}V^{\epsilon}(T).
\end{align}
 Taking $\eta < \frac{1}{2L_{\sigma}}$ and applying Lemma \ref{Gronwall2} gives
\begin{align*}
	Z^{\epsilon}(T)\leq C_{c_1,c_3,c_4,c_5,\lambda,L_\sigma,N,M,T}\left[\sqrt{\epsilon}V^{\epsilon}(T)+(\sqrt{\epsilon}V^{\epsilon}(T))^{C_{c_1,c_3,c_4,\lambda,L_\sigma,T}}\right],
\end{align*}
where $0<C_{c_1,c_3,c_4,\lambda,L_\sigma,T}\leq 1$. Now we take expectations of both sides of the above inequality to obtain
\begin{align}
	\mathbb{E}Z^{\epsilon}(T)\leq  C_{c_1,c_3,c_4,c_5,\lambda,L_\sigma,N,M,T}\left[\sqrt{\epsilon}\mathbb{E}V^{\epsilon}(T)+{\sqrt{\epsilon}}^{C_{c_1,c_3,c_4,\lambda,L_\sigma,T}}\mathbb{E}\left(V^{\epsilon}(T)^{C_{c_1,c_3,c_4,\lambda,L_\sigma,T}}\right)\right].
\end{align}
By the boundedness of $\sigma$, Lemma 4.1 and Proposition 4.2 in \cite{SZ}, we know that for any $p>0$,
\begin{align}\label{2211121.2227}
	\sup_{\epsilon>0}\mathbb{E}[(V^{\epsilon}(T))^p] = \sup_{\epsilon>0}\mathbb{E}\sup_{t \leq T, x \in \mathbb{R}}\left\{\left|\int_{0}^{t} \int_{\mathbb{R}}p_{t-s}(x,y)\sigma(X^{\epsilon,h_\epsilon}(s,y))W(\mathrm{d}s,\mathrm{d}y)\right|e^{-\lambda|x| e^{\beta t}}\right\}^{p} \leq C_{\lambda,\beta,T,p} .
\end{align}
Hence $\mathbb{E} Z^{\epsilon}(T)$ converges to $0$ as $\epsilon\to0$, that is,
\begin{align}
	\lim_{\epsilon\rightarrow 0}\mathbb{E}\sup_{t \leq T \wedge \tau_{M}^{\delta,\epsilon}, x \in \mathbb{R}}\left(\left|X^{\epsilon,h_\epsilon}(t,x)-Y^{h_\epsilon}(t,x)\right| e^{-\lambda|x| e^{\beta t}}\right) = 0.
\end{align}
Applying Chebyshev's inequality yields for any $\rho>0$,
\begin{align}\label{probconv1}
	\lim_{\epsilon\rightarrow 0}\mathbb{P}\Bigg( \sup_{t \leq r \wedge \tau_{M}^{\delta,\epsilon}, x\in \mathbb{R}}\left(\left|X^{\epsilon,h_\epsilon}(t,x)-Y^{h_\epsilon}(t,x)\right| e^{-\lambda|x| e^{\beta t}}\right)>\rho \Bigg) = 0.
\end{align}
Since
\begin{align}
	&\ \mathbb{P}\left(\sup_{t \leq \tau^{\delta,\epsilon}\wedge T, x \in \mathbb{R}}\left(\left|X^{\epsilon,h_\epsilon}(t,x)-Y^{h_\epsilon}(t,x)\right| e^{-\lambda|x| e^{\beta t}}\right)>\rho \right) \nonumber\\
	\leq & \ \mathbb{P}\Bigg( \sup_{t \leq \tau_{M}^{\delta,\epsilon}, x \in\mathbb{R}}\left(\left|X^{\epsilon,h_\epsilon}(t,x)-Y^{h_\epsilon}(t,x)\right| e^{-\lambda|x| e^{\beta t}}\right)>\rho \Bigg) +\mathbb{P}\left(\tau_{M,\epsilon}< T\right) ,
\end{align}
letting $\epsilon\rightarrow 0$ and then $M\rightarrow\infty$ in the above inequality, using \eqref{probconv1} and \eqref{221106.1150}, we get
\begin{align}\label{221107.1203}
	\lim_{\epsilon\rightarrow 0}\mathbb{P}\left(\sup_{t \leq \tau^{\delta,\epsilon}\wedge T, x \in \mathbb{R}}\left(\left|X^{\epsilon,h_\epsilon}(t,x)-Y^{h_\epsilon}(t,x)\right| e^{-\lambda|x| e^{\beta t}}\right)>\rho \right) = 0.
\end{align}
Taking $\rho<\delta$ in the above limit and by the definition of $\tau^{\delta}$, we see that
\begin{align}\label{eq z 01}
	\lim_{\epsilon\to0}\mathbb{P}(\tau^{\delta,\epsilon}<T)=0.
\end{align}
Therefore, by (\ref{221107.1203}) and (\ref{eq z 01}), for any $\rho>0$,
\begin{align}\label{221107.1204}
	&\limsup_{\epsilon\rightarrow 0} \mathbb{P}\left(\sup_{t \leq T, x \in \mathbb{R}}\left(\left|X^{\epsilon,h_\epsilon}(t,x)-Y^{h_\epsilon}(t,x)\right| e^{-\lambda|x| e^{\beta t}}\right)>\rho \right) \nonumber\\
	\leq &\ \limsup_{\epsilon\rightarrow 0} \mathbb{P}\Bigg( \sup_{t \leq \tau^{\delta,\epsilon}\wedge T, x \in\mathbb{R}}\left(\left|X^{\epsilon,h_\epsilon}(t,x)-Y^{h_\epsilon}(t,x)\right| e^{-\lambda|x| e^{\beta t}}\right)>\rho \Bigg) + \limsup_{\epsilon\rightarrow 0}\mathbb{P}\left(\tau^{\delta,\epsilon} < T\right)=0.
\end{align}
The proof of Proposition \ref{C2} is complete.
\end{proof}

\vskip 0.5cm

\section{Verifications of Claims (\textbf{C1}) and (\textbf{C2}): Case 2}\label{Sec 5}

In this section, we assume that

\begin{center}
{\bf Assumption Case 2}:\ \ \ \ \ \ \ \ \  $u_0\in C_{tem}$ and {\rm(\textbf{H0(b)})} holds.
\end{center}
Under this assumption, the aim of this section is to verify Claims (\textbf{C1}) and (\textbf{C2}) in the proof of Theorem \ref{221107.1426}; see Propositions \ref{C1 Case 2} and \ref{C2 Case 2}, respectively.


Since the Lipschitz condition of the coefficient $b$ is equivalent to Assumption {\rm(\textbf{H1(b)})} with $c_3 = c_4 = 0$, most of proofs in Section 4 are still valid for the case considered in this section. What's more, we can take $\beta \equiv 0$ such that $e^{-\lambda|x| e^{\beta t}}$ reads as $e^{-\lambda |x|}$ to simplify the proofs in Section 4. The main difference is the following: $\sigma$ could be unbounded in this section, while it is bounded in Section 4, see {\rm(\textbf{H1(c)})}.
Hence new difficulties occur, naturally.
In the following, we will focus on overcoming the difficulties raised by the unboundedness of $\sigma$.

\vskip 0.2cm

To verify Claim (\textbf{C1}), we first give a priori estimate for (\ref{ske0}); see
the following lemma.
\begin{lemma}\label{221203.2300}
	Let $\{h_n\}_{n\in\mathbb{N}}\subset\mathcal{H}_N$ for some fixed $N>0$.
For $n\geq 1$, let $Y^{h_n}$ be the unique solution of (\ref{ske0}) with $h$ replaced by $h_n$.  	
	Then for any $\lambda,T>0$, we have
	\begin{align}
		\sup_{n \geq 1} \sup_{t \leq T, x \in \mathbb{R}}\left(\left|Y^{h_n}(t, x)\right| e^{-\lambda|x|}\right)\leq C_{\lambda,T,u_0,L,N},
	\end{align}
where $L$ is the Lipschitz constant appearing in  (\ref{221126.2141}).
\end{lemma}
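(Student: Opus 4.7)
The strategy will mirror the proof of Lemma \ref{u_nbdd}, but with two simplifications and one new difficulty. Since $b$ is globally Lipschitz, and hence of linear growth by (\textbf{H0(a)}), the growing exponential weight $e^{-\lambda|x|e^{\beta t}}$ used in Lemma \ref{u_nbdd} is no longer needed and I will work directly with the stationary weight $e^{-\lambda|x|}$; effectively I take $\beta\equiv 0$ throughout. The new difficulty compared with Lemma \ref{u_nbdd} is that $\sigma$ is now only of linear growth rather than uniformly bounded by $K_\sigma$, so the elementary estimate for the controlled term used in (\ref{fn3}) is no longer available and must be replaced by the moment estimate of Proposition \ref{moment}. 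Throughout, I would fix $\lambda,T>0$ and introduce
\begin{align*}
f_n(r):=\sup_{t\le r,\,x\in\mathbb{R}}\bigl(|Y^{h_n}(t,x)|\,e^{-\lambda|x|}\bigr), \qquad r\le T,
\end{align*}
and decompose the mild formulation of (\ref{ske0}) into the heat propagator, the drift convolution, and the control-driven convolution.

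For the heat term, the computation in (\ref{fn1}) already yields the bound $2e^{\lambda^2 T/2}|u_0|_{(-\lambda)}$, independently of $n$. For the drift term I would use $|b(u)|\le L(1+|u|)$ together with the Gaussian bound $\int_{\mathbb{R}}p_{t-s}(x,y)e^{\lambda|y|}\mathrm{d}y\le 2e^{\lambda^2(t-s)/2}e^{\lambda|x|}$ (the same estimate that drives (\ref{4.37})) to obtain a control of the form
\begin{align*}
LT+2L\,e^{\lambda^2 T/2}\int_0^{T} f_n(s)\,\mathrm{d}s.
\end{align*}
No superlinear correction is needed since the bound $\int p_{t-s}(x,y)|y|e^{\lambda|y|}\mathrm{d}y$ does not appear when the weight is $e^{-\lambda|x|}$ rather than $e^{-\lambda|x|e^{\beta t}}$.

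The core of the proof is the controlled convolution $\int_0^t\int_{\mathbb{R}}p_{t-s}(x,y)\sigma(Y^{h_n}(s,y))h_n(s,y)\mathrm{d}s\mathrm{d}y$. Here I invoke Proposition \ref{moment} with $p=1$, $\varphi\equiv\lambda$, and $\tilde{\sigma}(s,y):=\sigma(Y^{h_n}(s,y))$, which is Carath\'eodory because $Y^{h_n}\in C([0,T],C_{tem})$ and $\sigma$ is continuous. For any $\eta>0$ this gives
\begin{align*}
&\sup_{t\le T,x\in\mathbb{R}}\left|\int_0^{t}\!\!\int_{\mathbb{R}}p_{t-s}(x,y)\sigma(Y^{h_n}(s,y))h_n(s,y)\mathrm{d}s\mathrm{d}y\right|e^{-\lambda|x|}\\
&\quad\le \eta\sup_{s\le T,y\in\mathbb{R}}\bigl(|\sigma(Y^{h_n}(s,y))|e^{-\lambda|y|}\bigr)+C_{\eta,\lambda,T,N}\int_0^{T}\gamma_n(s)\sup_{y\in\mathbb{R}}\bigl(|\sigma(Y^{h_n}(s,y))|e^{-\lambda|y|}\bigr)\mathrm{d}s,
\end{align*}
with $\gamma_n(s):=\int_{\mathbb{R}}|h_n(s,y)|^2\mathrm{d}y$. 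The linear growth $|\sigma(u)|\le L(1+|u|)$ then gives $\sup_y|\sigma(Y^{h_n}(s,y))|e^{-\lambda|y|}\le L(1+f_n(s))$, which is precisely what allows the feedback into $f_n$.

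Combining the three estimates and choosing $\eta<1/(2L)$ to absorb the $\eta L f_n(T)$ term into the left-hand side, I would arrive at an inequality of the form
\begin{align*}
f_n(T)\le C_1+C_2\int_0^{T}\bigl(1+\gamma_n(s)\bigr)f_n(s)\,\mathrm{d}s,
\end{align*}
with $C_1,C_2$ depending only on $\lambda,T,u_0,L,N$. Since $h_n\in\mathcal{H}_N$ implies $\int_0^{T}\gamma_n(s)\mathrm{d}s=|h_n|_{\mathcal{H}}^2\le N^2$, the standard linear Gronwall inequality closes the estimate and produces a bound on $f_n(T)$ that is independent of $n$. The main obstacle is the feedback created by unbounded $\sigma$: we cannot pull $|\sigma|$ out of the stochastic-like integral, and Proposition \ref{moment} is precisely the device that converts the $L^\infty$-weighted norm of this controlled convolution into an integral expression in $f_n$ suitable for Gronwall, all at the level of first-order moments.
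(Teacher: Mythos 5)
Your proposal is correct, but it handles the crucial controlled-convolution term by a genuinely different route from the paper. The paper applies Cauchy--Schwarz in $(s,y)$ directly, pulling out $|h_n|_{\mathcal{H}}\le N$ and leaving $\bigl(\int_0^t\int_{\mathbb{R}}p_{t-s}^2(x,y)\sigma^2(Y^{h_n}(s,y))\,\mathrm{d}s\mathrm{d}y\bigr)^{1/2}$, which via the linear growth of $\sigma$ and the $L^2$ heat-kernel bound (\ref{3.21}) forces the argument onto the \emph{squared} quantity $(f_n(t))^2$ and produces a Volterra inequality with the singular kernel $(t-s)^{-1/2}$ (see (\ref{f_n3})); this is then iterated once to remove the singularity before Gronwall is applied. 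You instead invoke Proposition \ref{moment} with $p=1$ and $\varphi\equiv\lambda$ (legitimate: a constant is nonnegative and nondecreasing, and $\sigma(Y^{h_n}(\cdot,\cdot))$ is Carath\'eodory), absorb the small $\eta L f_n$ term, and land on a first-order linear Gronwall inequality with the integrable weight $1+\gamma_n(s)$, $\int_0^T\gamma_n\le N^2$. Your route avoids the singular kernel and the iteration step entirely, and is consistent with how the controlled term is treated throughout Section 4; the paper's route is more elementary (only H\"older plus (\ref{3.21})) and its squared formulation is reused verbatim for the stochastic convolution in Lemma \ref{lastbdd}, which is presumably why the authors chose it here. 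The only cosmetic point is that you should record the closed inequality for every $r\le T$ (not just $r=T$) before citing Gronwall, but this is immediate since all your constants are monotone in the time horizon.
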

\begin{proof}
	Fix $\lambda>0$.
Using H\"older's inequality, Assumption {\rm(\textbf{H0(b)})} and (\ref{3.21}) we get
	\begin{align}\label{f_n3}
		&\sup_{x\in\mathbb{R}} \left\{ \left|\int_{0}^{t}\int_{\mathbb{R}} p_{t-s}(x,y)\sigma(Y^{h_n}(s,y))h_n(s,y)\mathrm{d}s\mathrm{d}y \right| e^{-\lambda|x|}\right\}\nonumber\\
		\leq & |h_n|_{\mathcal{H}} \sup_{x\in\mathbb{R}}\left\{\left(\int_{0}^{t}\int_{\mathbb{R}}p^2_{t-s}(x,y)\sigma^2(Y^{h_n}(s,y))\mathrm{d}s\mathrm{d}y\right)^\frac{1}{2}e^{-\lambda|x|}\right\}\nonumber\\
		\leq &L|h_n|_{\mathcal{H}} \sup_{x\in\mathbb{R}}\left\{\left(\int_{0}^{t}\int_{\mathbb{R}}p^2_{t-s}(x,y)(1+|Y^{h_n}(s,y)|^2)\mathrm{d}s\mathrm{d}y\right)^\frac{1}{2}e^{-\lambda|x|}\right\}\nonumber\\
		\leq & L|h_n|_{\mathcal{H}} \sup_{x\in\mathbb{R}}\left\{\left(\int_{0}^{t}\frac{1}{\sqrt{\pi(t-s)}}\mathrm{d}s+\int_{0}^{t}\sup_{r\leq s,y\in\mathbb{R}}\left(|Y^{h_n}(r,y)|^2e^{-2\lambda|y|}\right)\int_{\mathbb{R}}p^2_{t-s}(x,y)e^{2\lambda|y|}\mathrm{d}y\mathrm{d}s\right)^\frac{1}{2}e^{-\lambda|x|}\right\}\nonumber\\
		\leq & LN \left(2\sqrt{\frac{t}{\pi}}+ \frac{e^{\lambda^2 t}}{\sqrt{\pi}}\int_{0}^{t}\frac{(f_n(s))^2}{\sqrt{t-s}}\mathrm{d}s\right)^{\frac{1}{2}} .
	\end{align}

Let
	\begin{align}
		f_n(t):=\sup_{x\in\mathbb{R}}\left(|Y^{h_n}(t,x)|e^{-\lambda|x|}\right), \quad t\in[0,T].
	\end{align}

By using the similar argument in the proof of Lemma \ref{u^nbdd} (see also Lemma \ref{u_nbdd}) with $c_3 = c_4=\beta=0$, we can obtain
\begin{align}
	(f_n(t))^2\leq C_{\lambda,t,u_0,L,N}+C_{\lambda,t,L,N}\int_{0}^{t}\frac{(f_n(s))^2}{\sqrt{t-s}}\mathrm{d}s, \quad \ 0\leq t\leq T.
\end{align}
Iterating the above inequality and then applying Gronwall's inequality and taking the supremum over $t\in [0,T]$, we have
\begin{align}
	\sup_{n\geq 1}\sup_{t\leq T,x\in\mathbb{R}}\left(|Y^{h_n}(t,x)|e^{-\lambda|x|}\right)\leq C_{\lambda,T,u_0,L,N}.
\end{align}

The proof of this lemma is complete.
\end{proof}

With the help of Lemma \ref{221203.2300}, using a similar, but simpler, argument in the proof of Proposition \ref{C1}, we have the following proposition, i.e., Claim (\textbf{C1}).
Recall the definition of $\mathcal{G}^0$ introduced in the proof of Theorem \ref{221107.1426}. We have
\begin{proposition}\label{C1 Case 2}
Under {\bf Assumption Case 2}, for any $N>0$, $\{h_n\}_{n\in\mathbb{N}}\subset \mathcal{H}_N$ and $h\in\mathcal{H}_N$ with $h_n\to h$ weakly in $\mathcal{H}$ as $n\to\infty$, then $\lim_{n\rightarrow\infty}\mathcal{G}^0(\text{Int}(h_n))=\mathcal{G}^0(\text{Int}(h))$ in $C([0,T],C_{tem})$.
%
%
\end{proposition}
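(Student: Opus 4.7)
The plan is to adapt the proof of Proposition \ref{C1} (Case 1) to the present setting, exploiting the simplifications that $b$ is globally Lipschitz (so we may take $c_3=c_4=0$ and thus $\beta=0$, replacing the weight $e^{-\lambda|x|e^{\beta t}}$ by the simpler $e^{-\lambda|x|}$ without needing a small‐time restriction), while confronting the new difficulty that $\sigma$ need not be bounded but only of linear growth. Writing $Y^{h_n}$ and $Y^h$ for $\mathcal{G}^0(\mathrm{Int}(h_n))$ and $\mathcal{G}^0(\mathrm{Int}(h))$, I will decompose the integrand of the $\sigma$-term as
\begin{equation*}
\sigma(Y^{h_n})h_n-\sigma(Y^h)h=\sigma(Y^h)(h_n-h)+\bigl(\sigma(Y^{h_n})-\sigma(Y^h)\bigr)h_n,
\end{equation*}
and bound the drift difference using the global Lipschitz continuity of $b$.

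For the diffusion and drift differences involving $Y^{h_n}-Y^h$ in a self-referential way, I will follow the Case 1 argument: applying Proposition \ref{moment} with $p=1$ to the term $(\sigma(Y^{h_n})-\sigma(Y^h))h_n$ (using the Lipschitz property of $\sigma$ with no boundedness needed in this application) produces an $\eta L_\sigma$ coefficient in front of the supremum together with a $\gamma_n$-weighted linear term, which after absorbing into the left-hand side and applying Lemma \ref{Gronwall2} yields a Gronwall-type bound driven by the first term. The Lipschitz estimate for $b$ likewise yields only a linear-in-$g_n$ contribution. These pieces go through with only cosmetic modification from Section~4.

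The key remaining step, and the main obstacle, is to show that
\begin{equation*}
F_n(t,x):=\int_0^t\!\int_{\mathbb{R}}p_{t-s}(x,y)\sigma(Y^h(s,y))\bigl(h_n(s,y)-h(s,y)\bigr)\mathrm{d}s\mathrm{d}y
\end{equation*}
tends to zero in the norm $\sup_{t\le T,x\in\mathbb{R}}|\cdot|e^{-\lambda|x|}$, which in Section~4 relied on $\sigma$ being bounded (Lemma \ref{U^nV^n}). I will split into $|x|>M$ and $|x|\le M$: for $|x|>M$, Cauchy–Schwarz gives
\begin{equation*}
|F_n(t,x)|e^{-\lambda|x|}\le 2N\,e^{-\lambda|x|}\Bigl(\int_0^T\!\int_{\mathbb{R}}p_{t-s}^2(x,y)\sigma^2(Y^h(s,y))\mathrm{d}y\mathrm{d}s\Bigr)^{1/2},
\end{equation*}
and by the linear growth of $\sigma$ together with Lemma \ref{221203.2300} applied at some $\lambda'\in(0,\lambda)$, the inner integral is bounded by a constant times $e^{2\lambda'|x|}$ (using \eqref{3.20}–\eqref{3.21}), so this contribution vanishes as $M\to\infty$ uniformly in $n$. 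For $|x|\le M$, I will establish equicontinuity of $\{F_n\}$ on $[0,T]\times[-M,M]$ by the same Cauchy–Schwarz device combined with estimates \eqref{1oflem3.3}–\eqref{5oflem3.3} on differences of heat kernels, where the factor $\sigma^2(Y^h(s,y))\le 2L^2(1+C^2 e^{2\lambda'|y|})$ is integrable against $p^2$ uniformly in $(t,x)$ on compacts. Equicontinuity then allows reduction to a finite Riemann-type sum over a $\rho$-grid, and the weak convergence $h_n\rightharpoonup h$ in $\mathcal{H}$ (against each fixed square-integrable kernel $p_{i\rho-s}(j\rho,y)\sigma(Y^h(s,y))$) makes every summand tend to zero. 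Sending $n\to\infty$ then $M\to\infty$ then the mesh to zero yields $\lim_n\sup_{t,|x|\le M}|F_n|=0$, which combined with the Gronwall step completes the verification.
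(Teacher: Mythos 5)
Your proposal is correct and follows essentially the same route the paper intends: the paper proves Proposition \ref{C1 Case 2} precisely by invoking the a priori bound of Lemma \ref{221203.2300} and then repeating the Proposition \ref{C1} argument with $c_3=c_4=\beta=0$, and your treatment of the unbounded $\sigma$ (linear growth plus the uniform bound on $Y^h$, Cauchy--Schwarz against $p^2$ with an exponential weight) is exactly the needed modification of Lemmas \ref{u^nbdd} and \ref{U^nV^n}. The only detail worth flagging is that the equicontinuity step requires weighted analogues of the kernel estimates \eqref{1oflem3.3}--\eqref{5oflem3.3} (with the factor $e^{2\lambda'|z|}$ inside the integrals), which hold by the same computations but are not literally the statements cited.
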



Let
	\begin{align}\label{eq V}
		V^\epsilon(T)=\sup_{t\leq T,x\in\mathbb{R}}\left|\int_{0}^{t}\int_{\mathbb{R}}p_{t-s}(x,y)\sigma(X^{\epsilon,h_\epsilon}(s,y))W(\mathrm{d}s,\mathrm{d}y)e^{-\lambda|x|}\right| .
	\end{align}

To verify (\textbf{C2}), we will
use the following a priori  estimates.

\begin{lemma}\label{lastbdd}
Assume {\bf Assumption Case 2}. Let $\{h_\epsilon\}_{\epsilon>0}\subset \mathcal{X}_N$ for some $N>0$. Let $X^{\epsilon,h_\epsilon}$ be the unique solution of \eqref{eqnhe}, and $Y^{h_\epsilon}$ be the unique solution of \eqref{ske0} with $h$ replaced by $h_\epsilon$. Then for any $T, \lambda >0$, there exists a constant $C_{\lambda,T,u_0,L,N}$ such that
	\begin{align}
		&\sup_{0<\epsilon \leq 1} \mathbb{E} \bigg[ \sup_{t\leq T,x\in\mathbb{R}}\left(|X^{\epsilon,h_\epsilon}(t,x)|e^{-\lambda|x|}\right)^2 \bigg] \leq C_{\lambda,T,u_0,L,N},\\
	   &\sup_{0<\epsilon \leq 1} \mathbb{E} [ V^{\epsilon}(T)^2 ]\leq C_{\lambda,T,u_0,L,N},	
	\end{align}
where $L$ is the Lipschitz constant appearing in (\ref{221126.2141}).
\end{lemma}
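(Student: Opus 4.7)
The plan is to control the weighted second moment of $X^{\epsilon,h_\epsilon}$ by a singular-kernel Volterra inequality, close it via iteration and Gronwall, and then read off the bound on $V^\epsilon(T)$ from the stochastic convolution moment estimates of \cite{SZ}. Because $\sigma$ is only of linear growth (not bounded as in Case~1), both the controlled drift and the stochastic integral cause the quantity being estimated to reappear on the right-hand side, so the inequality must first be localized by a stopping time to ensure finiteness, and only then closed.

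Fix $\lambda, T > 0$, decompose the mild form (\ref{eqnhe}) as $X^{\epsilon,h_\epsilon} = I_0 + I_b + I_h + \sqrt{\epsilon}\,I_W$ (initial, drift, controlled, stochastic terms), and set
\begin{align*}
R^\epsilon(t) := \sup_{s\leq t,\, x\in\mathbb{R}}\bigl(|X^{\epsilon,h_\epsilon}(s,x)|\,e^{-\lambda|x|}\bigr)^2, \qquad \tau_M := \inf\{t\geq 0 : R^\epsilon(t) \geq M\} \wedge T,
\end{align*}
so that $R^\epsilon(\cdot\wedge\tau_M)\leq M$ is bounded. The semigroup bound (\ref{3.20}) controls $|I_0|^2 e^{-2\lambda|x|}$ by $4 e^{\lambda^2 T}|u_0|_{(-\lambda)}^2$. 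The linear growth in {\rm(\textbf{H0(a)})} combined with (\ref{3.20}) and Cauchy--Schwarz in time yields
$\sup_{t\leq T,\,x}|I_b(t,x)|^2 e^{-2\lambda|x|} \leq C_{L,\lambda,T}\bigl(1 + \int_0^T R^\epsilon(s)\,\mathrm{d}s\bigr)$. For $I_h$, H\"older's inequality in $(s,y)$, together with $|\sigma(u)|^2 \leq 2L^2(1+|u|^2)$, $|h_\epsilon|_{\mathcal{H}}\leq N$, and the kernel estimate $\int_{\mathbb{R}} p_{t-s}^2(x,y)e^{2\lambda|y|}\mathrm{d}y \leq C_{\lambda,T}\,e^{2\lambda|x|}/\sqrt{t-s}$ (used in the proof of Lemma \ref{221203.2300}) gives, for each $t\leq T$,
\begin{align*}
\sup_{x\in\mathbb{R}}|I_h(t,x)|^2 e^{-2\lambda|x|} \leq C_{\lambda,L,N,T}\Bigl(1 + \int_0^t \frac{R^\epsilon(s)}{\sqrt{t-s}}\,\mathrm{d}s\Bigr).
\end{align*}

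For the stochastic piece I invoke the maximal moment estimates of Lemma~4.1 and Proposition~4.2 in \cite{SZ} which, for $p=2$, bound $\mathbb{E}\bigl[V^\epsilon(T\wedge\tau_M)^2\bigr]$ by $C_{\lambda,T}\,\mathbb{E}\sup_{s\leq T\wedge\tau_M,\,x}\bigl(|\sigma(X^{\epsilon,h_\epsilon}(s,x))|e^{-\lambda|x|}\bigr)^2$; linear growth of $\sigma$ then dominates this by $C_{L,\lambda,T}\bigl(1 + \mathbb{E}R^\epsilon(T\wedge\tau_M)\bigr)$. Taking expectations, localizing at $\tau_M$ (so every term is finite), and using $\epsilon\leq 1$, the four bounds combine to
\begin{align*}
\mathbb{E}R^\epsilon(t\wedge\tau_M) \leq C_{\lambda,T,u_0,L,N}\Bigl(1 + \int_0^t \mathbb{E}R^\epsilon(s\wedge\tau_M)\Bigl(1+\frac{1}{\sqrt{t-s}}\Bigr)\mathrm{d}s\Bigr),\qquad t\leq T.
\end{align*}
Iterating this inequality once, using $\int_r^t \mathrm{d}s/\sqrt{(t-s)(s-r)}=\pi$, collapses the weakly singular kernel to an $L^1$ kernel in $r$ and reduces the bound to a classical linear Gronwall inequality, yielding $\mathbb{E}R^\epsilon(T\wedge\tau_M) \leq C_{\lambda,T,u_0,L,N}$ uniformly in $M$ and $\epsilon \in (0,1]$. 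Monotone convergence as $M\to\infty$ gives the first inequality of the lemma, and feeding the resulting bound on $\mathbb{E}R^\epsilon(T)$ back into the \cite{SZ} stochastic convolution estimate with $p=2$ yields the second.

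The main obstacle is exactly the unboundedness of $\sigma$: the \cite{SZ} estimate for $\mathbb{E}V^\epsilon(T)^2$ naturally returns the quantity $\mathbb{E}R^\epsilon(T)$ we are trying to estimate, so the inequality is not a priori closed. The stopping time $\tau_M$ is what resolves this, guaranteeing finiteness so that the $\mathbb{E}R^\epsilon$-term on the right can be absorbed via the iterated Volterra/Gronwall argument with a constant uniform in $M$.
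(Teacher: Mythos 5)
There is a genuine gap at the point where you close the estimate. After applying the maximal moment inequality of \cite{SZ} and the linear growth of $\sigma$, the stochastic term contributes
\begin{align*}
\epsilon\,\mathbb{E}\bigl[V^{\epsilon}(T\wedge\tau_M)^2\bigr]\ \leq\ \epsilon\, C_{L,\lambda,T}\bigl(1+\mathbb{E}R^{\epsilon}(T\wedge\tau_M)\bigr),
\end{align*}
i.e.\ a term $\epsilon\,C_{L,\lambda,T}\,\mathbb{E}R^{\epsilon}(T\wedge\tau_M)$ sitting at the \emph{terminal} time, not inside a time integral. Such a term cannot be absorbed by the iterated Volterra/Gronwall argument (which only handles $\int_0^t c(s)\,x(s)\,\mathrm{d}s$-type terms), and it can only be moved to the left-hand side if $\epsilon\,C_{L,\lambda,T}<1$ — which fails for $\epsilon$ near $1$, since $C_{L,\lambda,T}$ is a fixed large constant. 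Your displayed inequality for $\mathbb{E}R^{\epsilon}(t\wedge\tau_M)$ therefore does not follow from the four bounds you list; the stopping time $\tau_M$ guarantees finiteness but supplies no smallness, so the circularity you correctly identify as "the main obstacle" is not actually resolved. You have also mis-stated the content of the key estimate: \cite[Proposition 4.2]{SZ} does not bound $\mathbb{E}[V^{\epsilon}(T)^2]$ by a fixed constant times $\mathbb{E}\sup_{t,x}\bigl(|\sigma(X^{\epsilon,h_\epsilon})|e^{-\lambda|x|}\bigr)^2$; it gives, for every $\eta>0$, a bound of the form $\eta\cdot(\text{sup-term})+C_{\eta,\lambda,T}\cdot(\text{space-time }L^2\text{-term})$, exactly analogous to Proposition \ref{moment}. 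With a fixed constant in front of the sup-term the lemma would be unprovable by this route.

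The paper's proof exploits precisely this $\eta$-splitting. It first derives $\mathbb{E}F^{\epsilon}(T)\leq C(\epsilon\,\mathbb{E}[V^{\epsilon}(T)^2]+1)$ from the pathwise Volterra inequality (your drift, controlled-term and iteration steps match this part). It then bounds $\mathbb{E}[V^{\epsilon}(T)^2]$ by $\eta L^2\,\mathbb{E}F^{\epsilon}(T)$ plus $C_{\eta}$ times the space-time quantity $G^{\epsilon}(t)=\mathbb{E}\int_{\mathbb{R}}|X^{\epsilon,h_\epsilon}(t,x)|^2e^{-2\lambda|x|}\,\mathrm{d}x$, and — this is the step your proposal is missing — establishes a \emph{separate, self-contained} a priori bound on $G^{\epsilon}$ via a closed Volterra inequality in which the stochastic integral is handled by the It\^o isometry and hence produces no sup-norm term. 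Only after $G^{\epsilon}\leq C$ is in hand does one substitute back and choose $\eta$ small enough that $\epsilon\,\eta L^2\,\mathbb{E}F^{\epsilon}(T)$ can be absorbed. To repair your argument you need both the correct $\eta$-form of the stochastic convolution estimate and the auxiliary bound on $G^{\epsilon}$.
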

\begin{proof}
	Set
	\begin{align}
		F^\epsilon(t):=\sup_{x\in\mathbb{R}}\left(|X^{\epsilon,h_\epsilon}(t,x)|e^{-\lambda|x|}\right)^2,\ t\in[0,T].
	\end{align}
	Similar to the proof of Lemma \ref{221203.2300}, one deduces that
\begin{align}
		F^\epsilon(t)\leq C_{\lambda,T,u_0,L,N}+C_{\lambda,T,L}\int_{0}^{t}
		F^\epsilon(s)\mathrm{d}s+C_{\lambda,T,L,N}\int_{0}^{t}
		\frac{F^\epsilon(s)}{\sqrt{t-s}}\mathrm{d}s+\epsilon (V^\epsilon(T))^2,\ t\in[0,T].
	\end{align}	
Iterating once, applying Gronwall's inequality and then  taking expectation lead to
	\begin{align}\label{221203.2230}
		\mathbb{E} F^\epsilon(T)\leq C_{\lambda,T,u_0,L,N}\left(\epsilon \mathbb{E} [V^\epsilon(T)^2 ]+ 1\right).
	\end{align}

Now we estimate $\mathbb{E} V^\epsilon(T)$.
According to \cite[Proposition 4.2]{SZ} and by the Lipschitz condition i.e., {\rm(\textbf{H0(b)})}, for any $\eta>0$,
	\begin{align}\label{We}
		\mathbb{E}[ V^\epsilon(T)^2] = &\mathbb{E}\sup_{t\leq T,x\in\mathbb{R}}\left|\int_{0}^{t}\int_{\mathbb{R}}p_{t-s}(x,y)\sigma(X^{\epsilon,h_\epsilon}(s,y))W(\mathrm{d}s,\mathrm{d}y)e^{-\lambda|x|}\right|^2\nonumber\\
		\leq&\eta \mathbb{E}\sup_{t\leq T,x\in\mathbb{R}}\left(|\sigma(X^{\epsilon,h_\epsilon}(t,x))|^2e^{-2\lambda|x|}\right)+C_{\eta,\lambda,T}\mathbb{E}\int_{0}^{T}\int_{\mathbb{R}}|\sigma(X^{\epsilon,h_\epsilon}(t,x))|^2e^{-2\lambda|x|}\mathrm{d}t\mathrm{d}x\nonumber\\
		\leq & \eta L^2 +\eta L^2\mathbb{E} F^{\epsilon}(T)
		+\frac{C_{\eta,\lambda,T}L^2T}{\lambda}+C_{\eta,\lambda,T}L^2\mathbb{E}\int_{0}^{T}\int_{\mathbb{R}}|X^{\epsilon,h_\epsilon}(t,x)|^2e^{-2\lambda|x|}\mathrm{d}t\mathrm{d}x,
	\end{align}
here the constant $C_{\eta,\lambda,T}$ is increasing in $\lambda\in (0,\infty)$.
Set
	\begin{align}
		G^\epsilon(t):=\mathbb{E}\int_{\mathbb{R}}|X^{\epsilon,h_\epsilon}(s,y)|^2e^{-2\lambda|x|}\mathrm{d}x,\ t\in[0,T].
	\end{align}
	By \eqref{eqnhe} we have
	\begin{align}\label{Ge}
		G^\epsilon(t)\leq& 4\int_{\mathbb{R}}\left|\int_{\mathbb{R}}p_{t-s}(x,y)u_0(y)\mathrm{d}y\right|^2e^{-2\lambda|x|}\mathrm{d}x\nonumber\\
		&+4\mathbb{E}\int_{0}^{t}\left|\int_{0}^{t}\int_{\mathbb{R}}p_{t-s}(x,y)b(X^{\epsilon,h_\epsilon}(s,y))\mathrm{d}s\mathrm{d}y\right|^2e^{-2\lambda|x|}\mathrm{d}x\nonumber\\
		&+4\mathbb{E}\int_{0}^{t}\left|\int_{0}^{t}\int_{\mathbb{R}}p_{t-s}(x,y)\sigma(X^{\epsilon,h_\epsilon}(s,y))h_\epsilon(s,y)\mathrm{d}s\mathrm{d}y\right|^2e^{-2\lambda|x|}\mathrm{d}x\nonumber\\
		&+4\epsilon\mathbb{E}\int_{0}^{t}\left|\int_{0}^{t}\int_{\mathbb{R}}p_{t-s}(x,y)\sigma(X^{\epsilon,h_\epsilon}(s,y))W(\mathrm{d}s,\mathrm{d}y)\right|^2e^{-2\lambda|x|}\mathrm{d}x\nonumber\\
		=:&4[G^\epsilon_1(t)+G^\epsilon_2(t)+G^\epsilon_3(t)+\epsilon G^\epsilon_4(t)].
	\end{align}
	By H\"older's inequality, Fubini's theorem, \eqref{3.20}, and $u_{0}\in C_{tem}$,
	\begin{align}
		G^\epsilon_1(t)\leq & \int_{\mathbb{R}} \int_{\mathbb{R}}p_{t-s}(x,y)|u_0(y)|^2\mathrm{d}y \cdot e^{-2\lambda|x|}\mathrm{d}x\nonumber\\
		\leq & 2e^{ 2\lambda^2 t}\int_{\mathbb{R}}|u_0(y)|^2 e^{-2\lambda|y|}\mathrm{d}y, \nonumber\\
		\leq & \frac{4 e^{2\lambda^2 t}}{\lambda} \sup_{y\in\mathbb{R}} \left(|u_0(y)|e^{-\frac{\lambda}{2} |y|}\right)^2<\infty. \label{Ge1}
	\end{align}
Similarly, applying the Lipschitz condition again, we get
	\begin{align}
		G^\epsilon_2(t)\leq &L^2 \mathbb{E}\int_{\mathbb{R}}\int_{0}^{t}\int_{\mathbb{R}}p_{t-s}(x,y)(1+|X^{\epsilon,h_\epsilon}(s,y)|^2)\mathrm{d}s\mathrm{d}y \cdot e^{-2\lambda|x|}\mathrm{d}x \nonumber\\
		\leq & 2L^2 e^{2\lambda^2t}\mathbb{E}\int_{0}^{t}\int_{\mathbb{R}}(1+|X^{\epsilon,h_\epsilon}(s,y)|^2)e^{-2\lambda|y|}\mathrm{d}s\mathrm{d}y\nonumber\\
		\leq & 2L^2 e^{2\lambda^2t}\left(\frac{t}{\lambda}+\int_{0}^{t}G^\epsilon(s)\mathrm{d}s\right),\label{Ge2} \\
		G^\epsilon_3(t)\leq & L^2N^2\mathbb{E}\int_{\mathbb{R}}\int_{0}^{t}\int_{\mathbb{R}}p^2_{t-s}(x,y)(1+|X^{\epsilon,h_\epsilon}(s,y)|^2)\mathrm{d}s\mathrm{d}y \cdot e^{-2\lambda|x|}\mathrm{d}x\nonumber\\
		\leq & \pi^{-\frac{1}{2}}L^2N^2e^{\lambda^2 t}\mathbb{E}\int_{0}^{t}\int_{\mathbb{R}}\frac{1}{\sqrt{t-s}}(1+|X^{\epsilon,h_\epsilon}(s,y)|^2)e^{-2\lambda|y|}\mathrm{d}s\mathrm{d}y\nonumber\\
		\leq & \pi^{-\frac{1}{2}}L^2N^2e^{\lambda^2 t}\left(\frac{2\sqrt{t}}{\lambda}+\int_{0}^{t}\frac{G^\epsilon(s)}{\sqrt{t-s}}\mathrm{d}s\right) , \label{Ge3}
	\end{align}
\begin{align}\label{Ge4}
		G^\epsilon_4(t)\leq & \int_{\mathbb{R}}\mathbb{E}\int_{0}^{t}\int_{\mathbb{R}}p^2_{t-s}(x,y)\sigma^2(X^{\epsilon,h_\epsilon}(s,y))\mathrm{d}s\mathrm{d}y \cdot e^{-2\lambda|x|}\mathrm{d}x\nonumber\\
		\leq& L^2\int_{\mathbb{R}}\mathbb{E}\int_{0}^{t}\int_{\mathbb{R}}p^2_{t-s}(x,y)(1+|X^{\epsilon,h_\epsilon}(s,y)|^2)\mathrm{d}s\mathrm{d}y \cdot e^{-2\lambda|x|}\mathrm{d}x\nonumber\\
		\leq & L^2 \pi^{-\frac{1}{2}}e^{\lambda^2 t}\left(\frac{2\sqrt{t}}{\lambda}+\int_{0}^{t}\frac{G^\epsilon(s)}{\sqrt{t-s}}\mathrm{d}s\right).
	\end{align}
We have used the Fubini theorem and \eqref{3.21} in the last step of (\ref{Ge3}).  And the Fubini theorem and the It\^o isometry have been used to get (\ref{Ge4}).

Combining \eqref{Ge}-\eqref{Ge4} together, for $0\leq t\leq T$ and $0<\epsilon\leq 1$,
	\begin{align}
		G^\epsilon(t)\leq C_{\lambda,t,u_0,L,N}+C_{\lambda,t,L}\int_{0}^{t}G^\epsilon(s)\mathrm{d}s+C_{\lambda,t,L,N}\int_{0}^{t}\frac{G^\epsilon(s)}{\sqrt{t-s}}\mathrm{d}s.
	\end{align}
Iterating the above inequality and then using Gronwall's inequality, we can obtain
	\begin{align}
		G^\epsilon(t)\leq C_{\lambda,T,u_0,L,N},\quad \forall\  0\leq t\leq T.
	\end{align}
	Combining the above inequality with \eqref{We}, we arrive at
	\begin{align}\label{We1}
		\mathbb{E} [V^{\epsilon}(T)^2] \leq  C_{\eta,\lambda,T,u_0,L,N}+\eta L^2\mathbb{E} F^{\epsilon}(T) .
	\end{align}
Substituting \eqref{We1} into (\ref{221203.2230}),
	\begin{align}\label{eq z 02}
		\mathbb{E} F^\epsilon(T) \leq C_{\lambda,T,u_0,L,N}\left[ 1+\epsilon C_{\eta,\lambda,T,u_0,L,N}+\epsilon\eta L^2\mathbb{E} F^\epsilon(T)\right].
	\end{align}
	By choosing $\eta$ small
enough, it follows from (\ref{We1}) and (\ref{eq z 02}) that for any fixed $\lambda>0$,
	\begin{align*}
		\sup_{0<\epsilon\leq 1}\mathbb{E} F^{\epsilon} (T) \leq C_{\lambda,T,u_0,L,N} \quad \text{ and }
		\sup_{0<\epsilon\leq 1}\mathbb{E} [V^{\epsilon}(T)^2] \leq C_{\lambda,T,u_0,L,N}.
	\end{align*}

The proof of Lemma \ref{lastbdd} is complete.
\end{proof}

Now we are in position to state the second main result in this section. The claim (\textbf{C2}) is verified in the following proposition.

\begin{proposition}\label{C2 Case 2}
Under {\bf Assumption Case 2}, for any $N>0$, $\{h_\epsilon,\epsilon>0\}\subset \mathcal{X}_N$ and $\delta>0$,
\begin{align*}
	\lim_{\epsilon\to 0} \mathbb{P} \left(   \tilde{d}\left(X^{\epsilon,h_\epsilon},Y^{h_\epsilon}\right)>\delta  \right)=0,
\end{align*}
where $Y^{h_\epsilon}=\mathcal{G}^{0}(\text{Int}(h_{\epsilon}))$ is the unique solution to (\ref{ske0}) with $h$ replaced by $h_{\epsilon}$, and $X^{\epsilon,h_\epsilon}$ is the unique solution to (\ref{eqnhe}).
\end{proposition}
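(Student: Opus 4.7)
The plan is to adapt the argument used for Proposition \ref{C2} in Section \ref{221203.2252}, with two simplifications and one new difficulty. Since $b$ is globally Lipschitz (i.e.\ (\textbf{H1(b)}) with $c_3=c_4=0$), there is no need for the time-dependent exponential weight: we may take $\beta\equiv 0$ and work throughout with the unweighted norm $\sup_{t\leq T,x\in\mathbb{R}}(|\cdot|e^{-\lambda|x|})$, and moreover the $\log_+$-type nonlinear Gronwall step disappears, leaving only a standard linear Gronwall estimate (together with a mild singular kernel $1/\sqrt{t-s}$ coming from stochastic convolution estimates). The new difficulty is that $\sigma$ is now only of linear growth, so $\sigma(X^{\epsilon,h_\epsilon})$ is unbounded; this is exactly what forces us to rely on the second-moment a priori bounds from Lemma \ref{lastbdd}.

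First I would reduce the claim: as in the first paragraph of the proof of Proposition \ref{C2}, using
\begin{align*}
\tilde d(\varphi_n,\varphi)\leq \sum_{k=1}^{K}\tfrac{1}{2^k}\sup_{t\leq T,x\in\mathbb{R}}\bigl(|\varphi_n(t,x)-\varphi(t,x)|e^{-|x|/k}\bigr)+\sum_{k>K}\tfrac{1}{2^k},
\end{align*}
it suffices to show that for every $\lambda,\rho>0$,
\begin{align*}
\lim_{\epsilon\to 0}\mathbb{P}\left(\sup_{t\leq T,x\in\mathbb{R}}\bigl(|X^{\epsilon,h_\epsilon}(t,x)-Y^{h_\epsilon}(t,x)|e^{-\lambda|x|}\bigr)>\rho\right)=0.
\end{align*}
I would fix $\lambda>0$ and, for $\delta\in(0,e^{-1})$ and $M>0$, introduce localizing stopping times
\begin{align*}
\tau_{M,\epsilon}&:=\inf\bigl\{t>0:\sup_{x}(|X^{\epsilon,h_\epsilon}(t,x)|e^{-\lambda|x|})\geq M\bigr\}\wedge\inf\bigl\{t>0:\sup_{x}(|Y^{h_\epsilon}(t,x)|e^{-\lambda|x|})\geq M\bigr\},\\
\tau^{\delta,\epsilon}&:=\inf\bigl\{t>0:\sup_{x}(|X^{\epsilon,h_\epsilon}(t,x)-Y^{h_\epsilon}(t,x)|e^{-\lambda|x|})\geq\delta\bigr\},\qquad \tau_M^{\delta,\epsilon}:=\tau_{M,\epsilon}\wedge\tau^{\delta,\epsilon}\wedge T.
\end{align*}
Chebyshev together with Lemma \ref{lastbdd} (second moment bound for $X^{\epsilon,h_\epsilon}$) and Lemma \ref{221203.2300} applied to $Y^{h_\epsilon}$ give $\lim_{M\to\infty}\sup_{0<\epsilon\leq 1}\mathbb{P}(\tau_{M,\epsilon}<T)=0$.

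Next I would set $Z^\epsilon(r):=\sup_{t\leq r\wedge\tau_M^{\delta,\epsilon},x\in\mathbb{R}}(|X^{\epsilon,h_\epsilon}(t,x)-Y^{h_\epsilon}(t,x)|e^{-\lambda|x|})$, subtract the mild formulations of \eqref{eqnhe} and \eqref{ske0}, and estimate the resulting three terms. Using the Lipschitz property of $b$, the drift term is bounded, via standard heat kernel estimates like \eqref{3.20} and \eqref{3.21}, by $C_{L,\lambda,T}\int_0^r Z^\epsilon(s)\,ds$. For the controlled-noise term involving $\sigma(X^{\epsilon,h_\epsilon})-\sigma(Y^{h_\epsilon})$, I would apply Proposition \ref{moment} with $p=1$, $\tilde\sigma=\sigma(X^{\epsilon,h_\epsilon})-\sigma(Y^{h_\epsilon})$, $\varphi\equiv\lambda$, and use the Lipschitz property of $\sigma$ to obtain
\begin{align*}
\sup_{t\leq r\wedge\tau_M^{\delta,\epsilon},x}\Bigl|\!\int_0^t\!\!\int_{\mathbb{R}}p_{t-s}(x,y)[\sigma(X^{\epsilon,h_\epsilon})-\sigma(Y^{h_\epsilon})]h_\epsilon(s,y)\,ds\,dy\Bigr|e^{-\lambda|x|}\leq \eta L Z^\epsilon(r)+C_{\eta,\lambda,T,N}\!\int_0^r\gamma_\epsilon(s)Z^\epsilon(s)\,ds,
\end{align*}
with $\gamma_\epsilon(s)=\int_{\mathbb{R}}|h_\epsilon(s,y)|^2\,dy$ and $\int_0^T\gamma_\epsilon(s)\,ds\leq N$ a.s. The stochastic convolution term is handled by $\sqrt{\epsilon}\,V^\epsilon(T)$, where $V^\epsilon(T)$ is defined in \eqref{eq V}. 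Choosing $\eta<1/(2L)$ to absorb $\eta L Z^\epsilon(r)$ on the left, I obtain
\begin{align*}
Z^\epsilon(r)\leq 2\sqrt{\epsilon}\,V^\epsilon(T)+C\int_0^r(1+\gamma_\epsilon(s))Z^\epsilon(s)\,ds,
\end{align*}
with a constant depending only on $c_1$-type parameters, $L$, $\lambda$, $N$, $M$, $T$. Gronwall's inequality then yields $Z^\epsilon(T)\leq C\sqrt{\epsilon}\,V^\epsilon(T)\exp(C(T+N))$.

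Finally I would take expectations and use the second-moment bound $\sup_{0<\epsilon\leq 1}\mathbb{E}[V^\epsilon(T)^2]\leq C$ from Lemma \ref{lastbdd} (here I should at least know $\mathbb{E}V^\epsilon(T)<\infty$, which follows from this second moment bound via Cauchy--Schwarz), concluding $\mathbb{E}Z^\epsilon(T)\leq C\sqrt{\epsilon}\to 0$. Chebyshev's inequality upgrades this to convergence in probability on the event $\{t\leq\tau_M^{\delta,\epsilon}\}$, and the standard covering argument
\begin{align*}
\mathbb{P}\bigl(\sup_{t\leq T,x}(|X^{\epsilon,h_\epsilon}-Y^{h_\epsilon}|e^{-\lambda|x|})>\rho\bigr)\leq \mathbb{P}(\sup_{t\leq\tau_M^{\delta,\epsilon}}(\cdots)>\rho)+\mathbb{P}(\tau^{\delta,\epsilon}<T)+\mathbb{P}(\tau_{M,\epsilon}<T)
\end{align*}
together with first letting $\epsilon\to 0$ and then $M\to\infty$ (and choosing $\rho<\delta$ to control $\mathbb{P}(\tau^{\delta,\epsilon}<T)$) finishes the proof. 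The main obstacle, and the reason for the extra work in Section \ref{Sec 5}, is precisely the unboundedness of $\sigma$: every estimate that in Case 1 used $|\sigma|\leq K_\sigma$ must now be replaced by one that produces a term $\mathbb{E} F^\epsilon(T)$ which then has to be re-absorbed, a balancing act carried out in Lemma \ref{lastbdd} and whose output ($\mathbb{E}[V^\epsilon(T)^2]<\infty$ and $\mathbb{E}F^\epsilon(T)<\infty$ uniformly in $\epsilon$) is the key input here.
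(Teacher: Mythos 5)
Your proposal is correct, but it executes the estimate differently from the paper. You transplant the full Case 1 machinery: first-moment (unsquared) sup-norm differences, Proposition \ref{moment} with $p=1$ plus the $\eta$-absorption trick to handle the term $\int p_{t-s}[\sigma(X^{\epsilon,h_\epsilon})-\sigma(Y^{h_\epsilon})]h_\epsilon$, and the stopping-time localization $\tau_M^{\delta,\epsilon}$ followed by the covering argument. The paper instead works with the \emph{squared} quantity $Z^\epsilon(t)=\sup_x\bigl(|X^{\epsilon,h_\epsilon}(t,x)-Y^{h_\epsilon}(t,x)|e^{-\lambda|x|}\bigr)^2$, bounds the controlled term by a plain Cauchy--Schwarz in $(s,y)$ using $|h_\epsilon|_{\mathcal{H}}\le N$ and \eqref{3.21}, which produces the mildly singular Gronwall inequality \eqref{221203.2247} with kernel $1/\sqrt{t-s}$; one iteration removes the singularity, Gronwall gives $\mathbb{E}Z^\epsilon(T)\le \epsilon\, C\,\mathbb{E}[V^\epsilon(T)^2]$, and Lemma \ref{lastbdd} finishes. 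Notably the paper uses no stopping times at all in Case 2: since $b$ and $\sigma$ are globally Lipschitz, none of the Gronwall constants depend on a truncation level $M$ (there is no $\log_+ M$ factor as in \eqref{Z11}), and $Z^\epsilon$ is already finite a.s.\ because both processes lie in $C([0,T],C_{tem})$; so your localization and the subsequent $\epsilon\to 0$ then $M\to\infty$ limit are harmless but superfluous. Your route buys a linear (nonsingular) Gronwall inequality and only a first-moment bound on $V^\epsilon(T)$ at the end; the paper's route avoids invoking Proposition \ref{moment} and the absorption step entirely, at the cost of one iteration of the singular kernel, and its second-moment formulation is what Lemma \ref{lastbdd} is tailored to feed. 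Both arguments are sound.
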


\begin{proof}
Set
\begin{align}
	Z^{\epsilon}(t):=\sup_{x\in\mathbb{R}}\left(\left|X^{\epsilon,h_\epsilon}(t,x)-Y^{h_\epsilon}(t,x)\right| e^{-\lambda|x|}\right)^2.
\end{align}

Recall $V^\epsilon$ introduced in (\ref{eq V}). An argument similar, but simpler, to that used to prove \eqref{Zll} yields that
\begin{align}\label{221203.2247}
	Z^\epsilon(t)\leq C_{\lambda,t,L}\int_{0}^{t}Z^\epsilon(s)\mathrm{d}s+C_{\lambda,t,L,N}\int_{0}^{t}\frac{Z^\epsilon(s)}{\sqrt{t-s}}\mathrm{d}s+2\epsilon (V^\epsilon(t))^2, \quad t\in [0,T].
\end{align}
Iterating (\ref{221203.2247}) and then applying Gronwall's inequality and taking expectations, we have
\begin{align}
	\mathbb{E}Z^\epsilon(T)\leq \epsilon C_{\lambda,T,L,N}\mathbb{E} [V^\epsilon(T)^2].
\end{align}
By Lemma \ref{lastbdd} we know that $\mathbb{E}Z^\epsilon(T)\to 0$ as $\epsilon\to 0$.

The proof of this proposition is complete.


\end{proof}

\section{Appendix}
\setcounter{equation}{0}

In the appendix, we present two Gronwall-type inequalities (see Lemmas \ref{Gronwall1} and \ref{Gronwall2}) and some a priori estimates with respect to the heat kernel (see Lemma \ref{lem 6.3}). 

\begin{lemma}\cite[Theorem 3.1]{W}\label{Gronwall1}
	Let $x(t), a(t), c_{1}(t), c_{2}(t)$ be nonnegative functions on $[0,T]$ and $c_0\ge 1$ be a constant. If for any $t\in[0,T]$,
	\begin{align*}
		x(t)+a(t)\leq c_0+\int_{0}^{t} c_{1}(s)x(s) \mathrm{d} s+\int_{0}^{t} c_{2}(s) x(s) \log_{+} x(s) \mathrm{d}s,
	\end{align*}
	and the integrals above are finite. Then for any $t\in[0,T]$,
	\begin{align*}
		x(t)\leq c_0^{e^{\int_{0}^{t} c_{2}(s) \mathrm{d} s}}
		\exp \left\{ e^{\int_{0}^{t} c_{2}(s) \mathrm{d}s} \int_{0}^{t} c_{1}(s) e^{-\int_{0}^{s} c_{2}(r)\mathrm{d}r} \mathrm{d} s \right\}.
	\end{align*}
\end{lemma}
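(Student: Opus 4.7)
The plan is to reduce this nonlinear integral inequality to a standard linear Gronwall inequality by taking a logarithm. First, since $a(t) \ge 0$, I would drop it on the left-hand side and define the majorant
\begin{align*}
\phi(t) := c_0 + \int_0^t c_1(s)\, x(s)\, \mathrm{d}s + \int_0^t c_2(s)\, x(s)\log_+ x(s)\, \mathrm{d}s,
\end{align*}
so that $x(t)\le \phi(t)$, and $\phi$ is nondecreasing with $\phi(0)=c_0\ge 1$. The key monotonicity observations are that the map $u\mapsto u\log_+ u$ is nondecreasing on $[0,\infty)$, and that $\phi(t)\ge 1$ throughout, so $\log_+\phi(t)=\log\phi(t)$. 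Combining these with $x(s)\le \phi(s)$ yields the pointwise bounds $c_1(s)\,x(s)\le c_1(s)\,\phi(s)$ and $c_2(s)\,x(s)\log_+ x(s)\le c_2(s)\,\phi(s)\log\phi(s)$.

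Next, I would exploit the fact that $\phi$ is absolutely continuous under the finiteness assumption on the integrals, so it is differentiable almost everywhere with
\begin{align*}
\phi'(t) = c_1(t)\,x(t) + c_2(t)\,x(t)\log_+ x(t) \le c_1(t)\,\phi(t) + c_2(t)\,\phi(t)\log\phi(t) \quad \text{a.e.}
\end{align*}
Dividing through by $\phi(t)\ge 1>0$ and setting $y(t):=\log\phi(t)$, this transforms into the linear differential inequality
\begin{align*}
y'(t) \le c_1(t) + c_2(t)\,y(t) \qquad \text{a.e., with } y(0)=\log c_0 \ge 0.
\end{align*}

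At this point the classical linear Gronwall lemma applies directly and gives
\begin{align*}
y(t) \le (\log c_0)\, e^{\int_0^t c_2(s)\,\mathrm{d}s} + \int_0^t c_1(s)\, e^{\int_s^t c_2(r)\,\mathrm{d}r}\,\mathrm{d}s.
\end{align*}
Exponentiating, rewriting $e^{\int_s^t c_2(r)\,\mathrm{d}r} = e^{\int_0^t c_2(r)\,\mathrm{d}r}\, e^{-\int_0^s c_2(r)\,\mathrm{d}r}$, and factoring out the common factor $e^{\int_0^t c_2(s)\,\mathrm{d}s}$ from the second term produces precisely the stated bound
\begin{align*}
x(t)\le \phi(t) \le c_0^{\,e^{\int_0^t c_2(s)\,\mathrm{d}s}}\exp\Bigl\{ e^{\int_0^t c_2(s)\,\mathrm{d}s}\int_0^t c_1(s)\,e^{-\int_0^s c_2(r)\,\mathrm{d}r}\,\mathrm{d}s\Bigr\}.
\end{align*}
The only subtle step is justifying that $\phi$ is absolutely continuous and that the almost-everywhere derivative dominates $\phi'$ in the sense used above; this is routine from the finiteness of the integrals in the hypothesis, and is the only nontrivial point in the cited reference \cite{W}. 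The monotonicity of $u\mapsto u\log_+ u$ together with $\phi\ge 1$ is what makes the logarithmic nonlinearity collapse into the linear form amenable to Gronwall.
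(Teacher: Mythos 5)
Your argument is correct and complete: the majorant $\phi$ is nondecreasing with $\phi\ge c_0\ge 1$, the monotonicity of $u\mapsto u\log_+u$ lets you replace $x$ by $\phi$ inside both integrals, and the substitution $y=\log\phi$ turns the inequality into a linear Gronwall inequality whose solution exponentiates to exactly the stated bound. The paper itself gives no proof of this lemma (it only cites \cite[Theorem 3.1]{W}), and your reduction is the standard argument behind that reference, so there is nothing to add beyond the absolute-continuity remark you already make.
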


\begin{lemma}\label{Gronwall2}
	Let $x(t),c_1(t),c_2(t)$ be nonnegative functions on $[0,T]$ and $c_0$ be a nonnegative constant.
	If for any $t\in[0,T]$,
	\begin{align*}
		x(t)\le c_0+\int_{0}^{t} c_1(s)x(s)ds+\int_{0}^{t} c_2(s)x(s)\log_+\frac{1}{x(s)}\mathrm{d}s,
	\end{align*}
	and the integrals above are finite, then for any $t\in[0,T]$,
	\begin{align*}
		x(t)\le \left(c_0+c_0^{e^{-\int_{0}^{t} c_2(s)\mathrm{d}s} } \right) e^{\int_{0}^{t}\left(c_1(s)+c_2(s)\right)\mathrm{d}s}.
	\end{align*}
\end{lemma}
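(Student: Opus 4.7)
The idea is to introduce the non-decreasing auxiliary function
\[
u(t):=c_0+\int_0^t c_1(s)x(s)\,\mathrm{d}s+\int_0^t c_2(s)x(s)\log_+\tfrac{1}{x(s)}\,\mathrm{d}s,
\]
so that $x\le u$, $u(0)=c_0$, $u$ is absolutely continuous, and $u'(t)=c_1(t)x(t)+c_2(t)x(t)\log_+\tfrac{1}{x(t)}$ a.e., and then to derive from this a closed differential inequality for $u$ and solve it by an integrating factor. The key elementary bound is
\[
y\log_+\tfrac{1}{y}\le \tfrac{u}{e}+u\log_+\tfrac{1}{u},\qquad 0\le y\le u,\ u>0,
\]
which combines the subadditivity $\log_+(1/y)=\log_+\!\bigl((u/y)\cdot(1/u)\bigr)\le\log_+(u/y)+\log_+(1/u)$ with $\sup_{0<y\le u}y\log(u/y)=u/e$ (attained at $y=u/e$) and with $y\log_+(1/u)\le u\log_+(1/u)$. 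Substituting $y=x(t)$ and using $x\le u$, and writing $\kappa(t):=c_1(t)+c_2(t)/e$ and $C_2(t):=\int_0^t c_2(s)\,\mathrm{d}s$, one obtains for a.e.\ $t\in[0,T]$
\[
u'(t)\le \kappa(t)u(t)+c_2(t)u(t)\log_+\tfrac{1}{u(t)}.
\]

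I would then analyse this inequality by splitting on the regime of $u$. While $u(t)<1$, set $v:=\log u$; since $\log_+(1/u)=-v$ in this regime, the chain rule gives $v'(t)+c_2(t)v(t)\le \kappa(t)$, and multiplying by the integrating factor $e^{C_2(t)}$ together with the trivial bound $e^{C_2(s)-C_2(t)}\le 1$ for $s\le t$ yields
\[
v(t)\le (\log c_0)\,e^{-C_2(t)}+\int_0^t (c_1+c_2)(s)\,\mathrm{d}s,\qquad \text{i.e.}\quad u(t)\le c_0^{e^{-C_2(t)}}e^{\int_0^t (c_1+c_2)}.
\]
While $u(t)\ge 1$ the log term vanishes, so $u'\le \kappa u$ and $u(t)\le u(s)e^{\int_s^t \kappa}$ for any $s\le t$ in this regime.

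To conclude I would glue the two regimes via the continuity of $u$. If $c_0\ge 1$ then $u\ge 1$ throughout, so the second bound yields $u(t)\le c_0\,e^{\int_0^t(c_1+c_2)}$, which is absorbed into the claimed estimate. If $c_0<1$ and $u$ stays below $1$ on $[0,T]$, the first bound already gives $u(t)\le c_0^{e^{-C_2(t)}}e^{\int_0^t(c_1+c_2)}$, again dominated by the claim. Otherwise let $\tau\in(0,T]$ be the first time with $u(\tau)=1$; applying the first-regime bound at $\tau$ yields $1\le c_0^{e^{-C_2(\tau)}}e^{\int_0^\tau \kappa}$, and for $t\ge\tau$ the second regime produces
\[
u(t)\le e^{\int_\tau^t \kappa}=e^{-\int_0^\tau\kappa}e^{\int_0^t\kappa}\le c_0^{e^{-C_2(\tau)}}e^{\int_0^t\kappa}\le c_0^{e^{-C_2(t)}}e^{\int_0^t(c_1+c_2)},
\]
where the last step used $c_0<1$ together with $C_2(\tau)\le C_2(t)$ (so that $c_0^{e^{-C_2(\tau)}}\le c_0^{e^{-C_2(t)}}$). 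Collecting all cases gives $u(t)\le \bigl(c_0+c_0^{e^{-C_2(t)}}\bigr)e^{\int_0^t(c_1+c_2)}$, and $x\le u$ completes the proof. The degenerate case $c_0=0$ (where $\log u$ is undefined at $0$) is handled by replacing $c_0$ with $c_0+\varepsilon$ in the hypothesis, applying the previous argument, and letting $\varepsilon\downarrow 0$.

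The principal difficulty is the non-monotonicity of the nonlinearity $y\mapsto y\log_+(1/y)$ on $(0,\infty)$, which obstructs a direct Bihari-type comparison: one cannot simply replace $x$ by $u$ in the nonlinear term. The additive splitting $\log_+(1/y)\le\log_+(u/y)+\log_+(1/u)$ quarantines the non-monotone piece into the bounded constant $u/e$ and leaves a monotone nonlinearity for $u$, at which point the substitution $v=\log u$ produces a linear ODE whose explicit integrating-factor solution is exactly what generates the doubly-exponential factor $c_0^{e^{-C_2(t)}}$ appearing in the conclusion.
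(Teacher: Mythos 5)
Your proof is correct and follows essentially the same route as the paper's: both introduce the non-decreasing majorant $u$, replace the non-monotone nonlinearity $x\log_+\tfrac1x$ by $Cu+u\log_+\tfrac1u$ (you via the subadditivity $\log_+\tfrac1y\le\log_+\tfrac uy+\log_+\tfrac1u$ and $\sup_{0<y\le u}y\log\tfrac uy=u/e$, the paper via monotonicity of $z\log_+\tfrac1z$ on $[0,e^{-1})$ together with the global bound $e^{-1}\le u$), then substitute $v=\log u$ on the regime $u<1$, solve the resulting linear inequality by an integrating factor, use the plain Gronwall bound on the regime $u\ge 1$, and glue at the first hitting time of $1$ with the same case distinction on $c_0\gtrless 1$. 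Your explicit $\varepsilon$-perturbation for the degenerate case $c_0=0$ is a small point the paper leaves implicit, but otherwise the two arguments coincide.
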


\begin{proof}
	Let
	\begin{align*}
		y(t):= c_0+\int_{0}^{t} c_1(s)x(s)ds+\int_{0}^{t} c_2(s)x(s)\log_+\frac{1}{x(s)}\mathrm{d}s.
	\end{align*}

The monotonicity of the function $z\mapsto z\log_+ \frac{1}{z}$ on $[0,\frac{1}{e})$ implies that
\begin{eqnarray*}
\text{if }0\le y(t)< \frac{1}{e},\text{ then }x(t)\log_+\frac{1}{x(t)}\leq y(t)\log_+\frac{1}{y(t)}.
\end{eqnarray*}
Combining the fact that if $y(t)\ge \frac{1}{e}$, then $\max_{z\ge 0} z\log_+\frac{1}{z} =\frac{1}{e}\leq y(t)$, we have
	\begin{align}\label{y de}
		y'(t) = &\  c_1(t)x(t)+c_2(t)x(t)\log_+\frac{1}{x(t)}\nonumber\\
		\leq &\  c_1(t)y(t)+c_2(t) \left(y(t)+y(t)\log_+\frac{1}{y(t)}\right)\nonumber\\
		= &\  (c_1(t)+c_2(t)) y(t)+c_2(t)y(t) \log_+\frac{1}{y(t)}.
	\end{align}
	Set $c_3(t):=c_1(t)+c_2(t)$ and $t_0:=\inf\{t\geq0:y(t)=1\}$. The above inequality implies that, for $t\in [0,t_0]$,
	\begin{align*}
		(\log y(t))' +c_2(t)\log y(t)\leq c_3(t).
	\end{align*}
	Hence, for $t\in [0,t_0]$,
	\begin{align*}
		\log y(t)\le \left(\log c_0+\int_{0}^{t} c_3(s)e^{\int_{0}^{s} c_2(r)\mathrm{d}r}\mathrm{d}s\right)e^{-\int_{0}^{t} c_2(s)\mathrm{d}s}.
	\end{align*}
	It follows that, for any $t\in [0,t_0]$,
	\begin{align}\label{gron1}
		y(t)\leq &	c_0^{e^{-\int_{0}^{t} c_2(s)\mathrm{d}s}}\exp\left\{\int_{0}^{t} c_3(s)e^{\int_{0}^{s} c_2(r)\mathrm{d}r}\mathrm{d}s\,e^{-\int_{0}^{t} c_2(s)\mathrm{d}s}\right\} \nonumber\\
		\leq & c_0^{e^{-\int_{0}^{t} c_2(s)\mathrm{d}s}}e^{\int_{0}^{t} c_3(s)\mathrm{d}s} .
	\end{align}
	Since $y(t), t\geq0$ is increasing due to $y'(t)\ge 0$, the definition of $t_0$ and (\ref{y de}) imply that, for $t\in[t_0,T]$,
	\begin{align*}
		y'(t)\leq c_3(t)y(t).
	\end{align*}
    Hence \begin{align}\label{gron2}
		y(t)\leq y(t_0)e^{\int_{t_0}^{t} c_3(s)\mathrm{d}s}.
	\end{align}
	Combining (\ref{gron2}) with (\ref{gron1}) leads to
	\begin{align*}
		y(t)\leq& \left(c_0+ c_0^{e^{-\int_{0}^{t} c_2(s)\mathrm{d}s}} \right) e^{\int_{0}^{t} c_3(s)\mathrm{d}s},\ \forall t\in[t_0,T].
	\end{align*}
Here we remark that, to get the above inequality, we distinguish between the cases $c_0\geq 1$ and $0\leq c_0<1$.
Thus, for any $t\in[0,T]$,
	\begin{align*}
		x(t)\leq y(t)\leq \left(c_0+ c_0^{e^{-\int_{0}^{t} c_2(s)\mathrm{d}s}} \right) e^{\int_{0}^{t} (c_1(s)+c_2(s))\mathrm{d}s}.
	\end{align*}

The proof of this lemma is complete.
\end{proof}

Some a priori estimates of the heat kernel are given as follows.
\begin{lemma}\label{lem 6.3}
	(\romannumeral1) For any $x \in \mathbb{R}$, $t>0$ and $\eta \in \mathbb{R}$,
	\begin{align}\label{3.20}
		\int_{\mathbb{R}} p_{t}(x, y) e^{\eta|y|} \mathrm{d} y \leq 2 e^{\frac{\eta^{2} t}{2}} e^{\eta|x|}.
	\end{align}
	
	(\romannumeral2) For any $x\in \mathbb{R}$, $t>0$ and $\eta \in \mathbb{R}$,
	\begin{align}\label{3.21}
		\int_{\mathbb{R}} p_{t}^2(x, y) e^{\eta|y|} \mathrm{d} y \leq \frac{1}{\sqrt{\pi t}} e^{\frac{\eta^{2} t}{4}} e^{\eta|x|}.
	\end{align}
	
	(\romannumeral3) For any $x\in \mathbb{R}$ and $t,\eta >0$,
	\begin{align}\label{3.22}
		\int_{\mathbb{R}} p_{t}(x, y) e^{\eta|y|} \eta|y| \mathrm{d} y \leq e^{\frac{\eta^{2} t}{2}} e^{\eta|x|} \eta|x|+2 e^{\frac{\eta^{2} t}{2}}\left(\eta^{2} t+\eta \sqrt{\frac{t}{2 \pi}}\right) e^{\eta|x|}.
	\end{align}
	
	(\romannumeral4) For any $x, y \in \mathbb{R}, \theta \in[0,1], 0<s \leq t$,
	\begin{align}\label{1oflem3.3}
		\left|p_{t}(x, y)-p_{s}(x, y)\right| \leq \frac{(2 \sqrt{2})^{\theta}|t-s|^{\theta}}{s^{\theta}}\left(p_{s}(x, y)+p_{t}(x, y)+p_{2 t}(x, y)\right).
	\end{align}
	
	(\romannumeral5) For any $x, y \in \mathbb{R}$ and $t>0$,
	\begin{align}\label{2oflem3.3}
		\int_{\mathbb{R}}\left|p_{t}(x, z)-p_{t}(y, z)\right| \mathrm{d} z \leq \sqrt{\frac{2}{\pi}} \frac{|x-y|}{\sqrt{t}} .
	\end{align}
	
	(\romannumeral6) For any $x, y \in \mathbb{R}$ and $\eta, t>0$,
	\begin{align}\label{3oflem3.3}
		\int_{\mathbb{R}}\left|p_{t}(x, z)-p_{t}(y, z)\right| e^{\eta|z|} \mathrm{d} z \leq 2 \sqrt{2} \frac{|x-y|}{\sqrt{t}}  e^{\eta^{2} t} e^{\eta(|x|+|x-y|)} .
	\end{align}

	(\romannumeral7) For any $x, y \in \mathbb{R}$ and $\eta, t>0$,
	\begin{align}\label{4oflem3.3}
		& \int_{\mathbb{R}}\left|p_{t}(x, z)-p_{t}(y, z)\right| e^{\eta|z|} \eta|z| \mathrm{d} z \nonumber \\
		\leq & \frac{\sqrt{2}|x-y|}{\sqrt{t}} \left[e^{\eta^{2} t}  e^{\eta(|x|+|x-y|)} \eta(|x|+|x-y|)+2 e^{\eta^{2} t}\left(2 \eta^{2} t+\eta \sqrt{\frac{t}{\pi}}\right) e^{\eta(|x|+|x-y|)}\right].
	\end{align}
	
	(\romannumeral8) For any $x, y \in \mathbb{R}$ and $0<s \leq t$,
	\begin{align}\label{5oflem3.3}
		\int_{0}^{s} \int_{\mathbb{R}}\left|p_{t-r}(x, z)-p_{s-r}(y, z)\right|^{2} \mathrm{d} r \mathrm{d} z \leq \frac{\sqrt{2}-1}{\sqrt{\pi}}|t-s|^{\frac{1}{2}}+\frac{2}{\sqrt{\pi}}|x-y|.
	\end{align}
\end{lemma}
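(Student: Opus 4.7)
My plan is to prove the eight estimates (i)--(viii) of Lemma \ref{lem 6.3} via standard Gaussian computations on the heat kernel $p_t(x,y) = (2\pi t)^{-1/2} e^{-(x-y)^2/(2t)}$. Two tools do nearly all the work: the triangle inequality $|y| \leq |x| + |y-x|$, which lets me factor $e^{\eta|y|} \leq e^{\eta|x|} e^{\eta|y-x|}$ and peel the $e^{\eta|x|}$ outside the integral, and completing the square inside the Gaussian.

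For (i), after the change of variables $u = y-x$ and the peel-off, I reduce to $\int_\mathbb{R}(2\pi t)^{-1/2} e^{-u^2/(2t)+\eta|u|}\,du$; bounding this by $2\int_0^\infty$ and completing the square $-u^2/(2t)+\eta u = -(u-\eta t)^2/(2t)+\eta^2 t/2$ yields the factor $2e^{\eta^2 t/2}$. For (ii), I use the identity $p_t^2(x,y) = \frac{1}{2\sqrt{\pi t}}\, p_{t/2}(x,y)$ and apply (i) with $t$ replaced by $t/2$. For (iii), I split $\eta|y| \leq \eta|x|+\eta|y-x|$; the $\eta|x|$ piece reduces to (i), while the $\eta|y-x|$ piece reduces to a first absolute Gaussian moment which is again computed by shifting the center of the Gaussian to $\eta t$ and using $\int_{-\eta t}^{\infty}\frac{v}{\sqrt{2\pi t}} e^{-v^2/(2t)}\,dv \leq \sqrt{t/(2\pi)}$.

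For (iv), I would first differentiate $p_r$ in $r$ via $\partial_r p_r = \tfrac{1}{2}\Delta p_r = \tfrac{1}{2}\bigl(\tfrac{(x-y)^2}{r^2}-\tfrac{1}{r}\bigr) p_r$, then use the pointwise bound $\tfrac{(x-y)^2}{r^2} p_r(x,y) \leq \tfrac{C}{r}\, p_{2r}(x,y)$ (valid because $\xi e^{-\xi/2}$ is bounded). Integrating in $r$ between $s$ and $t$ yields the $\theta = 1$ bound with the three kernels $p_s, p_t, p_{2t}$ appearing; the $\theta=0$ case is the triangle inequality $|p_t - p_s| \leq p_t + p_s$; the intermediate $\theta\in(0,1)$ follows by writing $|p_t - p_s| = |p_t - p_s|^{1-\theta}|p_t - p_s|^\theta$ and combining the two endpoint bounds. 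For (v), I would write $p_t(x,z) - p_t(y,z) = (x-y)\int_0^1 \partial_x p_t(y+\theta(x-y),z)\,d\theta$ and integrate in $z$ using $\int_\mathbb{R} |\partial_x p_t(x,z)|\,dz = \sqrt{2/(\pi t)}$. Parts (vi) and (vii) are (v) combined with the peel-off (plus the first-moment computation from (iii)), after noting that for any $y'$ on the segment joining $x$ and $y$ one has $|z| \leq |y'|+|z-y'| \leq |x|+|x-y|+|z-y'|$. Finally, (viii) follows from the decomposition $p_{t-r}(x,z) - p_{s-r}(y,z) = [p_{t-r}(x,z) - p_{s-r}(x,z)] + [p_{s-r}(x,z) - p_{s-r}(y,z)]$, the inequality $(a+b)^2 \leq 2a^2 + 2b^2$, and $L^2$-bounds obtained term-by-term from (iv) with $\theta = 1/2$ (for the time-increment piece) and from (v) together with $\int p_{s-r}(\cdot,z)^2\,dz = (4\pi(s-r))^{-1/2}$ (for the space-increment piece).

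The main obstacle will be part (iv): one has to absorb simultaneously the singularity $r^{-1}$ coming from the time derivative and the spatial Gaussian tail factor $(x-y)^2/r^2$, which is precisely why the final bound must involve the three distinct heat kernels $p_s, p_t, p_{2t}$ rather than a single one, and why the interpolation between $\theta=0$ and $\theta=1$ is needed to produce the factor $|t-s|^\theta/s^\theta$. Once that quantitative estimate is in place, the remaining parts (v)--(viii) are routine applications of Gaussian moment computations, and parts (i)--(iii) are essentially exercises in completing the square.
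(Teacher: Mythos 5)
Your overall strategy (Gaussian completion of the square, the peel-off $e^{\eta|z|}\le e^{\eta|x|}e^{\eta|z-x|}$, the identity $p_t^2=\tfrac{1}{2\sqrt{\pi t}}p_{t/2}$, the time-derivative plus interpolation argument for (iv), and the integral-of-the-gradient argument for (v)) is the standard one, and for (i), (ii), (iv), (v), (vi), (vii) it goes through; note the paper itself gives no proof, dismissing (i)--(iii) as easy and citing Lemma 3.3 of [SZ] for (iv)--(viii). However, your plan for (viii) has a step that fails. For the time-increment piece you propose to square the pointwise bound (iv) with $\theta=\tfrac12$; since $\int_{\mathbb{R}}\bigl(p_{s-r}+p_{t-r}+p_{2(t-r)}\bigr)^2(x,z)\,\mathrm{d}z\ge \int_{\mathbb{R}}p_{s-r}^2(x,z)\,\mathrm{d}z=(4\pi(s-r))^{-1/2}$, this yields $\int_0^s\int_{\mathbb{R}}|p_{t-r}(x,z)-p_{s-r}(x,z)|^2\,\mathrm{d}z\,\mathrm{d}r\le C\,|t-s|\int_0^s(s-r)^{-3/2}\,\mathrm{d}r=\infty$. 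No choice of $\theta$ rescues this: integrability at $r=s$ forces $2\theta+\tfrac12<1$, i.e. $\theta<\tfrac14$, which can only produce $|t-s|^{2\theta}$ with $2\theta<\tfrac12$, not the stated rate $|t-s|^{1/2}$. The pointwise estimate (iv) is simply too lossy in $L^2(\mathrm{d}z)$ near $r=s$; the correct route is the exact computation via $\int_{\mathbb{R}}p_a(x,z)p_b(y,z)\,\mathrm{d}z=p_{a+b}(x,y)$, which gives $\int_0^s\int_{\mathbb{R}}|p_{t-r}(x,z)-p_{s-r}(x,z)|^2\,\mathrm{d}z\,\mathrm{d}r=\tfrac{1}{\sqrt{\pi}}\bigl[\sqrt{t}+\sqrt{s}-\sqrt{2}\sqrt{t+s}+(\sqrt{2}-1)\sqrt{t-s}\bigr]\le\tfrac{\sqrt2-1}{\sqrt\pi}|t-s|^{1/2}$, and an analogous explicit integration for the spatial piece.

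There is also a quantitative mismatch with the stated constants at two places. In (iii), bounding the $\eta|x|$ piece by (i) produces $2e^{\eta^2t/2}e^{\eta|x|}\eta|x|$, not the stated coefficient $1$; to get coefficient $1$ you must avoid the peel-off there and instead split the $y$-integral at the origin and complete the square, so that the shifted Gaussian has mean $x+\eta t$ and the first moment contributes $\eta|x|$ exactly once. This is not cosmetic: the coefficient $1$ is what makes $\tfrac{c_1}{\beta}e^{\frac{\lambda^2}{4\beta}e^{2\beta T-1}}\le\tfrac12\Longleftrightarrow T\le T^*(c_1,\lambda)$ in (\ref{T*}) and the estimate of $\uppercase\expandafter{\romannumeral3}$, so your weaker version would force a recalibration of $T^*$. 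Similarly, in (viii) the decomposition via $(a+b)^2\le 2a^2+2b^2$ doubles both constants relative to the statement, whereas the direct expansion $\int|p_{t-r}(x,\cdot)-p_{s-r}(y,\cdot)|^2=(4\pi(t-r))^{-1/2}+(4\pi(s-r))^{-1/2}-2p_{t+s-2r}(x,y)$ yields the stated bound exactly.
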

It is easy to verify (\romannumeral1)-(\romannumeral3). And (\romannumeral4)-(\romannumeral8) can be found in  Lemma 3.3 of \cite{SZ}.

\end{document}